	\numberwithin{equation}{section}
\def\e{\varepsilon}
\def\R{{\mathbb R}}
\def\C{{\mathbb C}}
\def\N{{\mathbb N}}
\def\Z{{\mathbb Z}}
\def\d{\partial}
\def\a{\alpha}
\def\EE{{\mathbb E}}
\def\g{\gamma}
\def\u{{\mathbf u}}
\def\v{{\mathbf v}}
\def\w{{\mathbf w}}
\def\f{{\mathbf f}}
\def\h{{\mathbf h}}
\def\H{{\mathbf H}}
\def\A{{\mathbf A}}
\def\F{{\mathbf F}}
\def\dsp{\displaystyle}
\def\be{\begin{equation}}
\def\ee{\end{equation}}
\def\Id{{\rm Id}}
\def\und{\underline}	
\begin{document}

\theoremstyle{plain}
\newtheorem{theo}{Theorem}

\theoremstyle{plain}
\newtheorem*{theo*}{Theorem}

\theoremstyle{plain}
\newtheorem{prop}{Proposition}[section]

\theoremstyle{plain} 
\newtheorem{defi}[prop]{Definition}

\theoremstyle{plain}
\newtheorem{coro}[prop]{Corollary}

\theoremstyle{plain}
\newtheorem{lemma}[prop]{Lemma}

\theoremstyle{definition}
\newtheorem*{condi}{Condition}

\theoremstyle{plain}
\newtheorem{notation}[prop]{Notation}

\theoremstyle{plain} 
\newtheorem{remark}[prop]{Remark}

\theoremstyle{plain} 
\newtheorem{hypo}[prop]{Assumption}

\title{On hyperbolicity and Gevrey well-posedness. \\ Part one: the elliptic case.}

\author{Baptiste Morisse \thanks{Université Paris Diderot, Sorbonne Paris Cité, Institut de Mathématiques de Jussieu-Paris Rive Gauche, UMR 7586, F- 75025, France - baptiste.morisse@imj-prg.fr.
The author thanks his advisor Benjamin Texier for all the remarks on this work.}
}
\date{\today}

\maketitle

\begin{abstract}
	In this paper we prove that the Cauchy problem for first-order quasi-linear systems of partial differential equations is ill-posed in Gevrey spaces, under the assumption of an initial ellipticity. The assumption bears on the principal symbol of the first-order operator. Ill-posedness means instability in the sense of Hadamard, specifically an instantaneous defect of Hölder continuity of the flow from $G^{\sigma}$ to $L^2$, where $\sigma\in(0,1)$ depends on the initial spectrum. Building on the analysis carried out by G. Métivier [\textit{Remarks on the well-posedness of the nonlinear Cauchy problem}, Contemp. Math. 2005], we show that ill-posedness follows from a long-time Cauchy-Kovalevskaya construction of a family of exact, highly oscillating, analytical solutions which are initially close to the null solution, and which grow exponentially fast in time. A specific difficulty resides in the observation time of instability. While in Sobolev spaces, this time is logarithmic in the frequency, in Gevrey spaces it is a power of the frequency. In particular, in Gevrey spaces the instability is recorded much later than in Sobolev spaces.
\end{abstract}


\setcounter{tocdepth}{2}
\tableofcontents




\section{Introduction}

We consider the Cauchy problem for first-order quasi-linear systems of partial differential equations
\be
	\label{Cauchy}
	\d_{t}u = \sum_{j=1}^{d} A_{j}(t,x,u) \d_{x_j}u + f(t,x,u) \;,\qquad u(0,x) = h(x)
\ee

\noindent where $t\geq 0$, $x\in\R^{d}$, $u(t,x)$ and $f(t,x,u)$ are in $\R^N$ and $A_j(t,x,u)\in\R^{N\times N}$. We assume throughout the paper that the $A_j$ and $f$ are analytic in a neighborhood of $(0,0,0)$.

We prove that if the first-order operator is initially micro-locally elliptic, then the Cauchy problem \eqref{Cauchy} is ill-posed in Gevrey spaces. Our results extend Métivier's ill-posedness theorem \cite{metivier2005remarks} for initially elliptic operators in Sobolev spaces.

While it may seem natural that Gevrey regularity, with associated sub-exponential Fourier rates of decay $O\left( e^{-|\xi|^{\sigma}} \right)$, with $\sigma <1$, will not be sufficient to counteract the exponential growth of elliptic operators (think of $e^{t\xi}$, as is the case for the Cauchy-Riemann operator $\d_{t} + i\d_{x}$), the proof of ill-posedness requires a careful analysis of linear growth rates and linear and nonlinear errors. This ill-posedness result is Theorem \ref{theorem.2}, stated in Section \ref{subsection.result}. By ill-posedness, we mean the absence of a Hölder continuous dependence on the data, as measured from $G^{\sigma}$ to $L^2$. The precise definition is given in Section \ref{subsection.definitions}. The larger $\sigma$, the stronger the result. Of course, well-posedness holds in the limiting case $\sigma=1$, corresponding to analytic functions. Assuming only a property of micro-local ellipticity for the principal symbol of \eqref{Cauchy}, we obtain, in Theorem \ref{theorem.2}, the bound $\sigma < 1/(m + 1)$, where $m \geq 1$ is an algebraic multiplicity. Under an assumption of smooth partial diagonalization (see Assumption \ref{hypo.2}), we obtain, in Theorem \ref{theorem.3}, ill-posedness for any $\sigma <1/2$ regardless of the algebraic multiplicity. Under stronger spectral assumptions (see Assumption \ref{hypo.3}), we obtain, in Theorem \ref{theorem.4}, ill-posedness for any $\sigma < 2/3$ and we outline the conditions which allow for an instability proof at an arbitrarily high Gevrey regularity.

We note that an equation may be simultaneously ill-posed in Sobolev spaces and well-posed in Gevrey spaces (for instance, the Prandtl equation  \cite{gerard2010ill}, \cite{gerard2013well}). Besides well-posedness, the distinct but related phenomenon of Landau damping for Vlasov-Poisson occurs in Gevrey spaces \cite{mouhot2010landau}, \cite{bedrossian2013landau}, but not in Sobolev spaces \cite{bedrossian2016nonlinear}.

In the companion paper \cite{morisse2016II}, we extend these results to systems transitioning from hyperbolicity to ellipticity, following \cite{lerner2010instability} and \cite{lerner2015onset}.


\subsection{Background: on Lax-Mizohata results}


The question of the well-posedness of the Cauchy problem was first introduced and studied by Hadamard in \cite{hadamard1902problemes}. Hadamard proved, in the case of linear second-order elliptic equations, that the associated solution flow is not regular in the vicinity of any solution of the system. The case of linear evolution systems of the form \eqref{Cauchy}, with $A_j(t,x,u) \equiv A_j(t,x) $, $f(t,x,u) \equiv f(t,x)$ was first studied by Lax in \cite{lax2005asymptotic}, where the proof was given that hyperbolicity of the system, i.e. reality of the spectrum of the principal symbol, was a necessary condition for \eqref{Cauchy} to be well-posed in the sense of Hadamard in $C^k$ spaces. Lax's proof relied on separation of the spectrum. Mizohata extended Lax's result without this assumption in \cite{mizohata1961some}. Some cases of nonlinear systems were studied later by Wakabayashi  in \cite{wakabayashi2001lax} (here with stability also with respect to source term) and by Yagdjian in \cite{yagdjian1998note} and \cite{yagdjian2002lax} (there in the special case of gauge invariant systems).

A first statement of a precise Lax-Mizohata result for first-order quasi-linear systems was given by Métivier in \cite{metivier2005remarks}, with a precise description of the lack of regularity of the flow. As we will adapt the methods used by Métivier, we want to take a close look at \cite{metivier2005remarks}.

\subsection{On Métivier's result in Sobolev spaces}
\label{subsubsection.metivier}

In Section 3 of \cite{metivier2005remarks} Guy Métivier proves Hölder ill-posedness in Sobolev spaces for the Cauchy problem \eqref{Cauchy}, as soon as hyperbolicity fails at $t=0$. The initial defect of hyperbolicity means here that there are some $x_0\in\R^d$, $\vec{u}_0\in\R^N$ and $\xi_0\in\R^{d}$ such that the principal symbol evaluated at $(0,x_0,\vec{u}_0,\xi_0)$:
\be
	\label{intro.def.A0}
	A_0 := \sum_j A_j(0,x_0,\vec{u}_0)\xi_{0,j}
\ee

\noindent is supposed to have a couple of eigenvalues with non zero imaginary part, say $\pm i \g_0$, with eigenvectors $\vec{e}_{\pm}$. Hölder well-posedness, locally in time and space, would mean that initial data $h_1$ and $h_2$ in $H^{\sigma}(B_{r_0}(x_0))$, for some small $r_0>0$, would generate solutions $u_1$ and $u_2$ such that
\be
	\label{well}
	||u_2 - u_1||_{L^2(\Omega)} \lesssim || h_2 - h_1 ||_{H^{\sigma}(B_{r_0}(x_0))}^{\a}
\ee

\noindent for some space-time domain $\Omega$, for some $\sigma\geq 0$, some $\a\in(0,1]$. In order to disprove \eqref{well}, Métivier chooses $h_1 \equiv \vec{u}_0$, and lets $u_1$ the Cauchy-Kovalevskaya solution issued from $h_1$, the existence of which is granted, locally in space and time, by the analyticity assumption on the coefficients $A_j$ and $f$. Translating, Métivier is reduced to the case $\vec{u}_0=0$, $u_1 \equiv 0$, and the proof that \eqref{well} does not hold is reduced to the construction of a family $(u_{\e})_{\e >0}$ of initially small, exact analytical solutions such that 
\be
	\label{intro.holder}
	\lim_{\e\to0} \frac{||u_{\e}||_{L^2(\Omega_{\e})}}{||u_{\e}(0)||^{\a}_{H^{\sigma}(B_0(x_0))}} = +\infty 
\ee

\noindent for all Hölder exponent $\a \in (0,1]$ and all Sobolev indices $\sigma >0$, where $\Omega_{\e}$ is a small conical space-time domain centered at $(0,x_0)$. 

To highlight the specific frequency $\xi_0$ at which the initial ellipticity occurs, Métivier looks for solutions of the form
\be
	\label{intro.ansatz}
	u_{\e}(t,x) =  \e \u(t/\e, x, (x-x_0)\cdot\xi_0/\e) 
\ee

\noindent with $\e$ a small parameter and $\u(s,x,\theta)$ is periodic in $\theta$. Then $\u$ solves 
\be
	\label{intro.equation}
	\d_{s}\u - A_0\d_{\theta}\u = G(\e \u) 
\ee

\noindent where $A_0$ is defined by \eqref{intro.def.A0} and $G(\e \u)$ comprises both linear and nonlinear "errors" terms. Factorizing the propagator, an equivalent fixed point equation is obtained 
\be
	\label{intro.pointfixe}
	\u = e^{sA_0\d_{\theta}}\u(0) + \int_{0}^{s} e^{(s-s')A_0\d_{\theta}}G(\e \u(s'))ds' .
\ee

For equation \eqref{intro.pointfixe}, the goal is to prove:
\begin{itemize}
	\item The existence of solutions over the space-time domain $\Omega_{\e}$. This is a Cauchy-Kovalevskaya type of result, discussed in Section \ref{intro.ck}.
	\item The \textit{wild growth estimate} \eqref{intro.holder}. Since the instability develops in time, the existence domain $\Omega_{\e}$ must be large enough for \eqref{intro.holder} to be recorded. This point is discussed in Section \ref{intro.growth.solutions}.
\end{itemize}


\subsubsection{Exponential growth of the solutions}
\label{intro.growth.solutions}

 As a consequence of the assumption of ellipticity on $A_0$ defined by \eqref{intro.def.A0}, the propagator has an exponential growth in Fourier
\be
	\label{intro.growth.fourier}
	\left|\left(e^{(s-s')A_0\d_{\theta}} \u(s,x,\theta)\right)_n\right| \lesssim e^{|n|\g_0(s-s')} | \u_n(s,x)|
\ee

\noindent where we denote by $(\cdot)_n$ the $n$-th Fourier mode with respect to the periodic variable $\theta$. We recall that $\xi_0$ is the distinguished frequency for which $A_0$, defined in \eqref{intro.def.A0}, has a couple of non real eigenvalues associated with eigenvectors $\vec{e}_{\pm}$. We define well-chosen initial data 
\be
	\label{intro.initial}
	h_{\e} = \e^{M+1} \left( e^{\mp i x\cdot\xi_0/\e} \vec{e}_{\pm} \right) \quad , \quad \h_{\e} := \e^{M}\left(e^{ \mp i \theta}\vec{e}_{\pm}\right) 
\ee

\noindent for which the upper bound is attained:
\be
	\label{intro.fepsilon}
	\f_{\e}(s,\theta) := e^{sA_0\d_{\theta}}\h_{\e}(\theta) \quad \text{satisfies} \quad \left|\left(\f_{\e}\right)_n\right| \approx \e^{M}e^{\g_0s} \; , \quad \forall n\in\Z .
\ee

\noindent Above $ \f_{\e}(s,\theta) $ is the free solution of \eqref{intro.equation}, that is the solution of the equation when $G(\e \u) = 0$. One key observation in view of the Hadamard instability is that, for times of order $M|\ln(\e)|$, the free solution $ \f_{\e} $ is of order $1$ with respect to $\e$, whereas at time $0$ it is of order $\e^{M}$. Roughly there are $f_{\e}(t,x) = \f_{\e}(t/\e,x,(x-x_0)\cdot\xi_0/\e)$, $h_{\e}(t,x) = \h_{\e}(t/\e,x,(x-x_0)\cdot\xi_0/\e)$ and $\Omega_{\e}$ a small conical space-time domain that contains the ball $B_{\e}((M|\ln(\e)|,x_0))$ of $\R_s\times\R_x^{d}$ for which there holds
\be
	\label{intro.ratio}
	\frac{||f_{\e}||_{L^2(\Omega_{\e})}}{||h_{\e}||^{\a}_{H^{\sigma}}} \approx \e^{(d+1)/2} \e^{-\a(M-\sigma)} 
\ee

\noindent and a suitable choice of $M$ leads to \eqref{intro.holder} in the simplified case $u_{\e} = f_{\e}$, as $\e\to0$. 

Through a careful analysis of the quasilinear system, Métivier proved that the nonlinear solution $\u_{\e}$ is close enough to $\f_{\e}$ in such a way that the growth \eqref{intro.fepsilon} of the free solution $\f_{\e}$ in long time $O(|\ln(\e)|)$ passes on to solutions $\u_{\e}$, such that
\be
	\label{intro.growth}
	|\u_{\e}(s,x,\theta)| \gtrsim \e^{M} e^{\g_0 s} 
\ee
	
\noindent in a whole neighborhood of $(s,x) = (M|\ln(\e)|,x_0)$. This estimate from below leads finally to \eqref{intro.holder}.

In this sketch of analysis, we see in particular that the (projection over the temporal coordinate of) the existence domain $\Omega_{\e}$ introduced in Section \ref{subsubsection.metivier} must be large enough to contain time intervals $[0,M|\ln(\e)|]$. In Gevrey spaces, this domain must be much larger, see Section \ref{intro.extension}.

\subsubsection{Existence of solutions via a long-time Cauchy-Kovalevskaya result}
\label{intro.ck}

In order to show that nonlinear solution $\u_{\e}$ of equation \eqref{intro.pointfixe} actually exists for sufficiently long time $O(M|\ln(\e)|)$, Métivier proved a long-time Cauchy-Kovalevskaya theorem using techniques of majoring series developed by Wagschal in \cite{wagschal1979probleme} for the resolution of the nonlinear Goursat problem. A presentation of the method can also be found in \cite{cartan1995theorie}, and is developed extensively in Section \ref{subsection.majoring}.

For formal series $\phi(x) = \sum_{k\in\N^{d}} \phi_{k} x^{k}$ and $\psi(x) = \sum_{k\in\N^{d}} \psi_{k} x^{k}$, with $\psi_{k} \geq 0$, we define the relation
$$
	\phi \prec \psi \quad \Longleftrightarrow \quad |\phi_{k}| \leq \psi_k \; , \; \forall k \in \N^{d} .
$$

\noindent The method is based on the observation that, if $\psi$ has convergence radius $R^{-1} >0$ and $\phi \prec \psi$, then $\phi$ has a convergence radius at least equal to $R^{-1}$. Conversely, there are series of one variable $\Phi(z)$ with convergence radius equal to $1$ that satisfy the following property: for any series $\phi$ with convergence radius less than $R^{-1}$, there is $C>0$ such that $\phi \prec C \Phi(R \sum_j x_j)$. The norm of $\phi$ will be defined as the best constant $C$ (see Definition \ref{definition.EE_s}). An example is $\Phi(z) = \frac{1}{1 - z}$, which satisfies the previous property thanks to Cauchy's inequalities. 

Based on those two observations, the method consists in shifting the focus from $\phi$ to $\Phi$. The key is that $\Phi$ can be taken to be much simpler than the original, typically unknown, series. In this paper we choose $\Phi$ with convergence radius equal to one and also such that $\Phi^2 \prec \Phi$ (see point $4$ in Lemma \ref{majoring.properties} in Section \ref{subsection.majoring}).

Now assume that we are given an initial datum $u(0,\cdot)$ in \eqref{intro.pointfixe} such that $u(0,x) \prec \Phi(R \sum_j x_j)$. The Cauchy-Riemann operator $\d_t + i\d_x$ provides the simplest example of an elliptic Cauchy problem. On this example the radius of analyticity decays linearly in time: the datum $u$ with $\hat{u}(0,\xi) = e^{-R^{-1}|\xi|}$ generates the solution $\hat{u}(t,\xi) = e^{-(R^{-1}-t)\xi}$ , for $t > 0$ and $\xi > 0$. It makes sense to assume similarly a linearly decaying radius of convergence for the solutions to our elliptic problems. Thus after comparing $u(0)$ to $\Phi(R \sum_j x_j)$, we will compare $u(s)$ to $\Phi(R \sum_j x_j + \e\rho s)$, where $R$ and $\rho$ are parameters to be specified later. Note that the series $\Phi(R \sum_j x_j + \e\rho s)$ has converging radius $R^{-1}(1 - \e \rho s)$, which is non zero for $s < (\e \rho)^{-1}$ ; this is hence the \textit{maximal time of regularity} for the solutions.

%
%


For simplicity of exposition, consider equation \eqref{intro.pointfixe} with $G(\e\u) \equiv \e \sum_j A_j(\e s,x,\vec{u}_0)\d_{x_j}\u$ and $A_0 \equiv 0$. The right-hand side of \eqref{intro.pointfixe} reduces then to
\be
	\label{intro.def.rhs}
	\int_{0}^{s} \e \sum_j A_j(\e s',x, \vec{u}_0)\d_{x_j} \u(s') \,ds'.
\ee

\noindent By assumption of analyticity of the $A_j$, we may control the series $A_j(\e s',x, \vec{u}_0)$ by the model $\Phi(R \sum_j x_j + \e\rho s)$, up to a multiplicative constant. Then \eqref{intro.def.rhs} is controlled, in the sense of the binary relation $\prec$ and up to a multiplicative constant, by
\begin{align*}
	& \int_{0}^{s} \e \Phi(R \sum_j x_j + \e\rho s') \sum_j \d_{x_j} \Phi\left(R \sum_j x_j + \e\rho s'\right) \, ds' \\
	& \prec \int_{0}^{s} \e R \Phi(R \sum_j x_j + \e\rho s') \Phi'\left(R \sum_j x_j + \e\rho s'\right) \, ds' \\
	& \prec \int_{0}^{s} \e R \Phi'\left(R \sum_j x_j + \e\rho s'\right) \, ds' \\
	& \prec R\rho^{-1} \Phi\left(R \sum_j x_j + \e\rho s\right) .
\end{align*}

\noindent Above, we used $2\Phi \Phi' \prec \Phi'$, a consequence of $\Phi^2 \prec \Phi$ (the relation $\prec$ is compatible with derivation, see Lemma \ref{majoring.properties}). We observed above the phenomenon of regularization (of $\d_{x_j}$) by integration in time, as in \cite{ukai2001boltzmann}. The "error" \eqref{intro.def.rhs} is controlled at a cost of $R\rho^{-1}$.

To conclude to the existence of the family of analytic solutions $\u_{\e}$ exhibiting the growth \eqref{intro.growth} on sufficiently long time $O(M|\ln(\e)|)$, Métivier compared the maximal time of regularity $(\e\rho)^{-1}$, which then has to be greater than the instability time $M|\ln(\e)|$. This implies some constraints on $R$ and $\rho$, and finally on the domain of existence $\Omega_{\e}$. We will not go into more detail at this point, as those constraints will appear in the Gevrey analysis too.



\subsection{Extension to Gevrey spaces}
\label{intro.extension}

The aim of this article is to prove the same kind of Hölder ill-posedness as in \cite{metivier2005remarks}, under the assumption of analyticity of the coefficients of the $A_j$. But whereas \cite{metivier2005remarks} holds in Sobolev spaces, we prove here instability in Gevrey spaces\footnote{This has been suggested by Jeffrey Rauch, whom the author thanks warmly.}. Following Métivier's method, we construct a family of solutions $(u_{\e})_{\e}$ that satisfies
\be
	\label{intro.holder.gevrey}
	\lim_{\e\to0} \frac{||u_{\e}||_{L^2(\Omega)}}{||u_{\e}(0)||^{\a}_{G^{\sigma}(B_0)}} = +\infty 
\ee

\noindent where the Gevrey space $G^{\sigma}(B_0)$ is precisely defined in Section \ref{subsection.definitions}, with $B_0$ a ball of $\R^{d}$ containing the distinguished point $x_0$. Our goal in this Section is to informally describe the specific difficulties posed by the analysis in Gevrey spaces.

\subsubsection{On the time of instability in Gevrey spaces}
\label{subsubsection.time}

We first need to find a suitable replacement for the small coefficient $\e^{M}$ of $\h_{\e}$ defined in \eqref{intro.initial} in the Sobolev framework. Indeed, the highly oscillating function $e^{ix\cdot\xi_0/\e}$ has Sobolev norm $ ||e^{ix\cdot\xi_0/\e}||_{H^{\sigma}(B_0)} \approx \e^{-\sigma} $ whereas the Gevrey norm satisfies (see Definition \ref{def.gevrey} and Lemma \ref{size.gevrey.exp}) $ 	||e^{ix\cdot\xi_0/\e}||_{G^{\sigma}(B_0)} \approx e^{\e^{-\sigma}}$. Appropriate initial data are both small and highly oscillating. Thus we replace \eqref{intro.initial} by
\be
	\label{intro.initial.gevrey}
	h_{\e} = e^{-\e^{-\delta}} \left(e^{\mp i x\cdot\xi_0/\e} \vec{e}_{\pm} \right) \quad , \quad \h_{\e} := e^{-\e^{-\delta}}\left(e^{i\mp \theta}\vec{e}_{\pm}\right) 
\ee 

\noindent with $\sigma < \delta$. At the end of the analysis, we expect \eqref{intro.growth} to be replaced by
\be
	\label{intro.growth.gevrey}
	|\u_{\e}(s,x,\theta)| \gtrsim e^{-\e^{-\delta}} e^{\g_0 s} .
\ee

\noindent This leads to a typical observation time $ \e^{-\delta} $. This is the time for which the time exponential growth associated with the ellipticity counterbalances the very small initial amplitude. This observation time is far bigger than the typical Sobolev time $O(|\ln(\e)|)$ described above in Section \ref{subsubsection.metivier}. Note that the limitation $\sigma < \delta$ ensures at least formally that the ratio \eqref{intro.ratio} in Gevrey spaces $G^{\sigma}$ diverges as $\e\to0$ (see Remark \ref{remark.size.gevrey}).

\subsubsection{On the control of linear errors over long times}
\label{subsubsection.control}

Typically the estimates for $G(\e \u)$ (with notation introduced in \eqref{intro.equation}), which comprises both linear and nonlinear error terms, degrade over time. This is problematic in view of the resolution of the fixed point equation \eqref{intro.pointfixe}. By definition of $A_0$ in \eqref{intro.def.A0}, the linear error comprises term
$$
	(\sum_j A_j(\e s,x,\e\u) \xi_{0,j} - A_0 )\d_{\theta} \u \approx (\e s + |x-x_0| + \e \u)\d_{\theta}\u .
$$

Suppose now, for simplicity of exposition, that $G(\e \u) = \e s \d_{\theta} \u$, and recall that $s = O(\e^{-\delta})$ according to the sketch of analysis of Section \ref{subsubsection.time}. Suppose in addition that the linear bound \eqref{intro.growth.fourier} holds, and that we have an a priori control of the Fourier mode $n={}-1$ of the solution $\u$ with a growth rate that is equal to the linear growth rate
$$
	|\u_{{}-1}(s)| \lesssim e^{-\e^{-\delta}} e^{\g_0 s}.
$$

\noindent The amplitude $e^{-\e^{-\delta}}$ is the one previously discussed in Section \ref{subsubsection.time}. Then equation \eqref{intro.pointfixe} for the Fourier mode $n={}-1$ reduces to
$$
	\u_{{}-1}(s) - e^{-\e^{-\delta}}e^{\g_0 s}\vec{e}_{+} =  \int_{0}^{s} e^{{}-i(s-s')A_0} \left(\e s' ({}-i) \u_{-1}(s')\right) \,{\rm d}s'
$$

\noindent where $\vec{e}_{+}$ is the eigenvector of $A_0$ associated to the eigenvalue with imaginary part $i\g_0$. For the right-hand side, we have the estimate: 
\begin{eqnarray}
	\left| \int_{0}^{s} e^{{}-i(s-s')A_0} \left(\e s' (-i) \u_1(s')\right) \,{\rm d}s' \right| & \lesssim & \int_{0}^{s} e^{\g_0(s-s')} \left(\e s' e^{-\e^{-\delta}} e^{\g_0 s'}\right) \,{\rm d}s' \nonumber \\
		& \lesssim & \frac{1}{2} \e s^2 e^{-\e^{-\delta}}e^{\g_0 s} \label{intro.computation}
\end{eqnarray}

\noindent thanks to the upper bound \eqref{intro.growth.fourier}. Hence $\u_{{}-1}(s)$ would satisfy \eqref{intro.growth.gevrey} if $ \e s^2  = o_{\e\to0}(1) $ for any $s\in[0,\e^{-\delta})$, which would lead to the stringent constraint on the Gevrey index $ \sigma < \delta < 1/2 $.

Thus we need to consider the varying-coefficient operator $\sum_j A_j(\e s , x, \vec{u}_0)\xi_{0,j} \d_{\theta}$, as opposed to \cite{metivier2005remarks} where the constant-coefficient operator $A_0 \d_{\theta}$ was considered.

\subsubsection{On linear growth bounds}
\label{intro.section.bounds}

As discussed in Section \ref{subsubsection.control}, we need to work with the varying-coefficient operator 
$$
	\sum_j A_j(\e s , x, \vec{u}_0)\xi_{0,j} \d_{\theta}.
$$

\noindent We introduce first the propagator $U(s',s,x,\theta)$ which solves
$$
 	\d_{s} U(s',s,x,\theta) - \sum_j A_j(\e s , x, \vec{u}_0)\xi_{0,j} \,\d_{\theta} \, U(s',s,x,\theta) \quad , \quad U(s',s',x,\theta) \equiv {\rm Id}.
$$

\noindent As $\sum_j A_j(\e s , x, \vec{u}_0)\xi_{0,j}$ does not depend on $\theta$, the Fourier coefficients $U_n(s',s,x)$ of the propagator satisfies the ODE
$$
	\d_{s} U_n(s',s,x) - in\sum_j A_j(\e s , x, \vec{u}_0)\xi_{0,j} \,U_n(s',s,x) \quad , \quad U_n(s',s',x) \equiv {\rm Id}.
$$

\noindent Then $U(\theta)$ acts diagonally on each Fourier component. Note that in the autonomous case $\sum_j A_j(\e s , x, \vec{u}_0)\xi_{0,j} \equiv \sum_j A_j(0 , x, \vec{u}_0)\xi_{0,j}$, the propagator satisfies
$$
	U(s',s,x,\theta) = \exp \left( (s-s') \sum_j A_j(0 , x, \vec{u}_0)\xi_{0,j}  \, \d_{\theta} \right).
$$

\noindent Using the propagator $U(s,s,x,\theta)$, fixed point equation \eqref{intro.pointfixe} is replaced by
\be
	\label{intro.pointfixe.full}
	\u(s,x,\theta) = \f(s,x,\theta) + \int_{0}^{s} U(s',s,x,\theta) G(\e\u(s',x,\theta)) ds' 
\ee

\noindent where $\f(s,x,\theta) = U(0,s,x,\theta) \h_{\e}(\theta)$ is the free solution, with $\h_{\e}$ defined in \eqref{intro.initial.gevrey}.

For the $n$-th Fourier coefficient $U_n(s',s,x)$ of the propagator, the derivation of bounds is described for instance in Section 4 of \cite{lerner2015onset}. Eigenvalues may cross at the distinguished point $(0,x_0)$. In particular, eigenvalues and eigenprojectors may not be smooth, although eigenvalues are continuous. Since we do not want to formulate any additional assumption on the symbol besides ellipticity (although see Section \ref{intro.subsection.all.gevrey} below and Theorem \ref{theorem.3}), this forces us, in the derivation of upper bounds of $U_n(s',s,x)$, to resort to the procedure of approximate trigonalization described for instance in \cite{lerner2015onset}. 

In this procedure, a small error is produced in the rate of growth. On one side, an upper bound
\be
	\label{intro.propa.bis}
	\left|U_n(s',s,x)\right| \lesssim \omega^{-(m-1)} e^{|n|(s-s') ({\rm Im}\,\lambda_0 + R^{-1} + \e \und{s} + \omega)}
\ee

\noindent is achieved, where $\lambda_0$ is an eigenvalue of $A_0$ with positive imaginary part which is maximal among the other eigenvalues, and $m$ is the algebraic multiplicity of $\lambda_0$ in the spectrum. In \eqref{intro.propa.bis} the parameter $\omega>0$ is associated with the trigonalization error. The optimal choice of $\omega$ is described below in Section \ref{intro.endgame}. The bound \eqref{intro.propa.bis} holds for $x$ in $B_{R^{-1}}(x_0)$ and $s$ in $(0,\und{s})$, where $R^{-1}$ is the convergence radius and $\und{s}$ the final time of observation. This is made precise in Lemma \ref{lemma.growth.propa}.

On the other side, the free solution satisfies a bound of the form
\be
	\label{intro.lower}
	|\f_{\e}(s,x,\theta)| \gtrsim  \omega^{-(m-1)}\,e^{-\e^{-\delta}} e^{s({\rm Im}\,\lambda_0 - r - \e \und{s} - \omega)}
\ee

\noindent for $(s,x)\in(0,\und{s})\times B_{r}(x_0)$ with $r$ small. This is made precise in Lemma \ref{lemma.growth.free.solution}.

\subsubsection{On the endgame}
\label{intro.endgame}

As we did in Section \ref{subsubsection.control}, suppose now that there holds $G(\e\u) = \e \sum_j A_j(\e s , x \vec{u}_0) \d_{x_j}\u(s)$ and the linear bound \eqref{intro.propa.bis}. Suppose also that we have an a priori control of the Fourier mode $n=1$ of the solution $\u$ with a growth rate that is equal to the linear growth rate
\be
	\label{intro.apriori}
	|\u_1(s)| \lesssim e^{-\e^{-\delta}} \omega^{-(m-1)} e^{(s-s') ({\rm Im}\,\lambda_0 + R^{-1} + \e \und{s} + \omega)} .
\ee

\noindent In view of bound \eqref{intro.propa.bis} and equation \eqref{intro.pointfixe.full}, there holds then for the Fourier mode $n=1$ the bound
$$
	|\u_1(s) - \f_1(s)| \lesssim \int_{0}^{s} \omega^{-(m-1)} e^{(s-s') ({\rm Im}\,\lambda_0 + R^{-1} + \e \und{s} + \omega)} \e \sum_j A_j(\e s' , x \vec{u}_0) \d_{x_j}\u(s') ds' .
$$

\noindent Thanks to the majoring series method explained in Section \ref{intro.ck} and based on \eqref{intro.apriori} , we may expect to bound the above by
\be
	\label{intro.local}
	|\u_1(s) - \f_1(s)| \lesssim e^{-\e^{-\delta}} \omega^{-2(m-1)} e^{s ({\rm Im}\,\lambda_0 + R^{-1} + \e \und{s} + \omega)} R\rho^{-1} .
\ee

\noindent To end the proof, it would suffice then to show that $\u_1$ has the same bound from below as $\f_1$ in \eqref{intro.lower}. This is the case if the right-hand side of \eqref{intro.local} satisfies
\be
	\label{intro.intermediaire}
	e^{-\e^{-\delta}} \omega^{-2(m-1)} e^{s ({\rm Im}\,\lambda_0 + R^{-1} + \e \und{s} + \omega)} R\rho^{-1} \ll \omega^{-(m-1)}\,e^{-\e^{-\delta}}  e^{s({\rm Im}\,\lambda_0 - r - \e \und{s} - \omega)}
\ee

\noindent for all $s\in(0,\und{s})$, where $\ll$ is defined in \eqref{notation.ll}. This is equivalent to
\be
	\label{intro.constraint}
	\omega^{-(m-1)}\,e^{\und{s} (R^{-1} + r + \e \und{s} + \omega)} R\rho^{-1} \ll 1 .
\ee

\noindent As explained in Section \ref{subsubsection.time}, the final time $\und{s}$ is of order $\e^{-\delta}$. In order for \eqref{intro.constraint} to be satisfied, the argument of the exponential should be at most of order $1$ as $\e$ goes to $0$. Hence $R^{-1}$, $r$ and $\omega$ are chosen to be less than $\e^{\delta}$. Note that we also get once again the constraint $\e\und{s}^2 <1$, which brings back the limitation $\sigma < \delta < 1/2$ on the Gevrey index. 

Besides \eqref{intro.constraint}, another constraint shows up in the analysis. Recall that we work with the majoring series model $\Phi(R\sum_j x_j + \e\rho s)$. Its domain of analyticity is the conical space-time domain $\{ (s,x) \,|\, R\sum_j |x_j| + \e\rho s < 1 \}$. As the time of instability $\und{s}$ is of order $\e^{-\delta}$, in order to see the instability the maximal regularity time $(\e\rho)^{-1}$ has to be greater than $\e^{-\delta}$. Hence another constraint
\be
	\label{intro.constraint.2}
	\e^{1 - \delta} \ll \rho^{-1} .
\ee

\noindent Since $\omega$ and $R^{-1}$ are of order $\e^{\delta}$, we rewrite constraint \eqref{intro.constraint} as $\rho^{-1} \ll \e^{(m-1)\delta} R^{-1}$ and then as
$$
	\rho^{-1} \ll \e^{m\delta} .
$$

\noindent Finally we end up with a consistency inequality $\e^{1-\delta} \ll \e^{m\delta}$, equivalent to the limitation $\delta < 1/(m+1)$ of the Gevrey index. This is our principal result, detailed in Theorem \ref{theorem.2}. 

\subsubsection{On proving instability for higher Gevrey indices}
\label{intro.subsection.all.gevrey}

We saw above in Section \ref{intro.endgame} that, in the general case, the consideration of the varying-coefficient operator $ \sum_j A_j(\e \tau,x,\vec{u}_0) \xi_{0,j} \d_{\theta} $ does not free us from the constraint $ \sigma < 1/2$. Indeed, as discussed in Sec \ref{intro.endgame}, we actually need to impose $ \sigma < 1/(m+1)$, where $ m \geq 1$ is the algebraic multiplicity of $\lambda_0$ in the spectrum.

We describe here a situation in which we improve the limiting Gevrey index.

Assume finally that \eqref{intro.propa.bis} and \eqref{intro.lower} can be replaced by 
\be
	\label{intro.propa.bis.max}
	\left|U_n(s',s)\right| \lesssim  e^{|n|(s-s') ({\rm Im}\,\lambda_0 + \omega)}
\ee

\noindent and
\be
	\label{intro.lower.max}
	|\f_{\e}(s,x,\theta)| \gtrsim \,e^{-\e^{-\delta}} e^{s({\rm Im}\,\lambda_0 - \e^2 s^2- r - \omega)}
\ee

\noindent respectively. Following the previous computations, we may then replace \eqref{intro.intermediaire} by 
$$	
	e^{-\e^{-\delta}} e^{s ({\rm Im}\,\lambda_0 + \omega)} R\rho^{-1} \ll \,e^{-\e^{-\delta}}  e^{s({\rm Im}\,\lambda_0 - \e^2 s^2 - r - \omega)}
$$

\noindent and we finally get, instead of \eqref{intro.constraint}, the new constraint 
$$
	e^{\und{s}(\e^2 \und{s}^2 + r + \omega)} R\rho^{-1} \ll 1 .
$$

\noindent It can be fulfilled for any $\delta$ in $(0,2/3)$, which implies instability in Gevrey spaces $G^{\sigma}$ with $\sigma < 2/3$. We show in Sections \ref{section.assumptions} and \ref{section.ansatz} that assumptions of maximality and semi-simplicity for the most unstable eigenvalue lead to \eqref{intro.propa.bis.max} and \eqref{intro.lower.max}. These correspond to the assumptions of Theorem \ref{theorem.4}.

\section*{Notations}

\begin{itemize}

	\item For all $z\in\C^{m}$ and $k\in\N^{m}$, we put
	\be
		\label{notation.product}
		z^{k} = \prod_{i= 1,\ldots,m} z_i^{k_i} 
	\ee
	
	\item For all $k\in\N^{m}$
	\be
		\label{notation.binom}
		\binom{k_1 + \cdots + k_m}{k_1,\ldots,k_m} = \frac{\dsp (k_1+\cdots+k_m)!}{\dsp \prod_{i= 1,\ldots,m} k_i! } 
	\ee
	
	\item For all $m$ and $i\in\{1,\ldots,m\}$, we denote $1_{i}$ the $m$-uple with all coefficients null but the $i$-th:
	\be
		\label{notation.1i}
		1_{i} = (0,\ldots,0,1,0,\ldots,0)
	\ee
	
	\item For all reals $A$ and $B$ we note
	\be
		\label{notation.lesssim}
		A \lesssim B
	\ee
	
	\noindent if there is some constant independent of $\e$ such that
	$$ A \leq C B .$$
	
	\item For any functions $A$ and $B$ of $\e$, we denote
		\be
			\label{notation.ll}
			A \ll B \quad \Longleftrightarrow \quad A = o_{\e\to0}(B) .
		\ee
	
	\item For $r>0$ and $x_0\in\R^{d}$ we denote
	\be
		\label{notation.ball}
		B_{r}(x_0) = \left\{x\in\R^{d} \;\big|\; |x-x_0| < r\right\} .
	\ee
	
	%
	
\end{itemize}


\section{Main assumptions and results}
\label{section.assumptions}


\subsection{Definitions: Hölder well-posedness in Gevrey spaces}
\label{subsection.definitions}


We recall the definition of Gevrey functions on an open set $B$ of $\R^{d}$:

\begin{defi}[Gevrey functions]
	\label{def.gevrey}
	Let $\sigma\in(0,1)$. We define $G^{\sigma}(B)$ as the set of $C^{\infty}$ functions $f$ on $B$ such that, for all compact $K \subset B$ there are constants $C_K >0$ and $c_K>0$ that satisfy
	\be
		|\d^{\a} f|_{L^{\infty}(K)} \leq C_K c_K^{|\a|} |\a|!^{1/\sigma} \qquad \forall \a \in\N^{d}.
	\ee
	\noindent We then define a family of norms on $G^{\sigma}(B)$, for all compact $K\subset B$ and $c>0$ by
	\be
		||f||_{\sigma,c,K} = \sup_{\a} |\d^{\a} f|_{L^{\infty}(K)}c^{-|\a|}|\a|!^{-1/\sigma}.
	\ee
\end{defi}

For an introduction to Gevrey spaces and their properties, we refer to the book of Rodino \cite{rodino1993linear}. We introduce also space-time conical domains centered on $(0,x_0)\in\R\times\R^{d}$.

\begin{defi}[Conical domains]
	\label{defi.conical}
	For $x_0\in\R^{d}$, $R>0$, $\rho>0$ and $t\geq 0$ we define the set
	\be
		\label{defi.Omega.t}
		\Omega_{R,\rho,t}(x_0) = \left\{x \in\R^{d} \;\Big|\; R |x-x_0|_{1} + \rho t <1 \right\} 
	\ee

	\noindent with $|x|_{1} = \sum_{j=1,\ldots,d} |x_j|$ the $L^{1}$ norm on $\R^{d}$. Note that for all $t \geq \rho^{-1}$, $\Omega_{R,\rho,t}(x_0) = \emptyset$. We also denote
	\be
		\label{defi.Omega}
		\Omega_{R,\rho}(x_0) = \bigcup_{t\geq 0} \{t\}\times\Omega_{R,\rho,t}(x_0) = \left\{(t,x) \in\R\times\R^{d} \;\Big|\; 0\leq t < \rho^{-1} ,\; R |x-x_0|_{1} + \rho t <1 \right\} .
	\ee
\end{defi}

\noindent	Note that $\Omega_{R,\rho,t}$ is decreasing for the inclusion as a function of $R$, $\rho$ and $t$. In particular, $\Omega_{R,0,0}(x_0)$ is $B_{R^{-1}}(x_0)$.

The question is whether the Cauchy problem \eqref{Cauchy} is well-posed in Gevrey spaces or not, in the following sense

\begin{defi}[Hölder well-posedness]
	\label{defi.well_posedness}
	We say that {\rm \eqref{Cauchy}} is Hölder well-posed in $G^{\sigma}$ locally around $x_0\in\R^{d}$ if there are constants $r_0 > r_1 >0$, $c>0$, $C_{\text{in}}>0$, $C_{\text{fin}}$, $\rho>0$, $\a\in(0,1)$ such that for any $h$ in $G^{\sigma}(B_{r_0}(x_0))$ with 
	$$
		||h||_{\sigma,c,K} \leq C_{\text{in}} \qquad \forall K \text{ compact of } B_{r_0}(x_0)
	$$
	
	\noindent and all $R>r_1^{-1}$ the Cauchy problem {\rm \eqref{Cauchy}} associated to $h$ has a unique solution $u(t,x)$ in $C^{1}(\overline{\Omega}_{R,\rho}(x_0))$ with $|u|_{L^{2}(\Omega_{R,\rho}(x_0))} \leq C_{\text{fin}}$ and if moreover, given $h_1$ and $h_2$ in $G^{\sigma}(B_{r_0}(x_0))$ the corresponding solutions $u_1$ and $u_2$ satisfy the estimate for all $R>r_1^{-1}$ and $K$ compact subset of $B_{r_0}(x_0)$
	$$
		|u_1 - u_2|_{L^{2}(\Omega_{R,\rho}(x_0))} \lesssim ||h_1 - h_2||_{\sigma,c,K}^{\a}.
	$$
\end{defi}


\subsection{Assumptions}
\label{subsection.normal}

%
We define the principal symbol evaluated at a distinguished frequency $ \xi_0\in\R^{d}$ by
\be
	\label{def.A}
	A(t,x,u) = \sum_{j} A_j(t,x,u)\xi_{0,j} \quad , \; \forall (t,x,u) \in \R_{+}\times\R^{d}\times\R^{N}.
\ee

\begin{hypo}
	\label{hypo.1}

	We assume that for some $x_0\in\R^{d}$ and $\vec{u}_0\in\R^{N}$, the spectrum of $A(0,x_0,\vec{u}_0)$ is not real:
	\be
		\label{hypo.spectrum}
		{\rm Sp} A(0,x_0,\vec{u}_0) \not\subseteq \R .
	\ee

\end{hypo}


That is, the principal symbol $A$ is initially elliptic. 

\begin{notation}
	\label{notation.elli}
	We denote then
		\be
			\label{def.A_0}
			A_0 = A(0,x_0,\vec{u}_0)
		\ee

		\noindent which is a constant matrix with non-real spectrum by \eqref{hypo.spectrum}. Among the nonreal eigenvalues of $A_0$, we denote $\lambda_0$ the one with maximal positive imaginary part, denoted $\g_0$. We denote $\vec{e}_{+}$ the associated eigenvector. We denote also
		\be
			\label{def.undA}
			\und{A}(t,x) = A(t,x,\vec{u}_0).
		\ee

\end{notation}

Up to translations in $x$ and $u$, which do not affect our assumptions, and by homogeneity in $\xi$, we may assume 
\be
	\label{etc}
	x_0 = 0 \quad , \quad \vec{u}_0 = 0 \quad , \quad \xi_0 \in \mathbb{S}^{d-1} .
\ee

Under Assumption \ref{hypo.1} alone, we prove instability for the Cauchy problem \eqref{Cauchy} in some Gevrey indices (Theorem \ref{theorem.2} in Section \ref{subsection.result} below). We now formulate additional assumptions which yield instability for higher Gevrey spaces (Theorems \ref{theorem.3} and \ref{theorem.4} below).

\begin{hypo}
	\label{hypo.2}
	For some $x_0 \in \R^d$ and $\xi_0 \in \mathbb{S}^{d-1},$ the matrix $A_0$ has an eigenvalue $\lambda_0$ such that there holds $\lambda_0 \in \C \setminus \R,$ and ${\rm Im} \, \lambda_0 > {\rm Im} \, \mu,$ for any other eigenvalue $\mu$ of $A_0.$ Besides, the eigenvalue $\lambda_0$ is semisimple (which means algebraic and geometric multiplicities coincide) and belongs to a branch of semisimple eigenvalues of $A.$ Finally, $(0,x_0,\lambda_0)$ is not a coalescing point in the spectrum of $\underline A.$ 
	
	\end{hypo}
	
	\medskip
	
We denote $P_0$ the eigenprojector of $A_0$ associated with $\lambda_0,$ and $A_0^{-1}$ the partial inverse of $A_0,$ defined by $P_0 A_0^{-1} = 0,$ $A_0 A_0^{-1} = \Id - P_0.$ We also denote $(t,x) = (x_0, \dots, x_d),$ so that $\d_0 = \d_t,$ $\d_j = \d_{x_j}.$ 

\begin{remark}
	\label{remark.asumm.2}
	The non-coalescing assumption {\rm \ref{hypo.2}} implies (see {\rm \cite{kato2013perturbation}}, or Corollary {\rm 2.2} of {\rm \cite{texier2015rouche}}) that there is a smooth (actually, analytical) branch $\lambda$ of eigenvalues of $\underline A$ such that $\lambda(0,x_0) = \lambda_0.$ The corresponding local eigenprojector $\underline P$ is smooth as well. The local semisimplicity assumption means that $\underline A \,\underline P = \lambda \underline P,$ that is, in restriction to the eigenspace associated with $\lambda,$ the symbol $A$ is diagonal. A sufficient condition for semisimplicity is algebraic simplicity of the eigenvalue. 
\end{remark}


\begin{hypo}
	\label{hypo.3}
	With notation $P_0$ and $A_0^{-1}$ introduced just above Remark {\rm \ref{remark.asumm.2}},
	\begin{itemize}
		\item[(i)] there holds $P_0 \d_j \underline A(0,x_0) P_0 = 0,$ for all $j \in \{0, \dots, d\}.$ 
	\end{itemize}

	\noindent {\rm Under condition (i), the matrix
		\be
			\label{local.assum.2}
			P_0 \d_i \underline A A_0^{-1} \d_j \underline A P_0 + P_0 \d_j \underline A A_0^{-1} \d_i \underline A P_0  + P_0 \d_{ij}^2 \underline A P_0
		\ee
	\noindent (where derivatives of $\underline A$ are evaluated at $(0,x_0)$) has only non-zero eigenvalue (see {\rm \cite{kato2013perturbation}}, or Proposition {\rm 2.6} of {\rm \cite{texier2004short}}), which we denote $\mu_{ij}$.
	}
	\begin{itemize}
		\item[(ii)] The matrix $({\rm Im} \, \mu_{ij})_{0\leq i,j\leq d}$ is negative definite.
	\end{itemize}

\end{hypo}

\begin{remark}
	\label{remark.asum.3}
	Under Assumption {\rm \ref{hypo.2}}, Assumption {\rm \ref{hypo.3}} implies (see {\rm \cite{kato2013perturbation}}, or Proposition {\rm 2.6} of {\rm \cite{texier2004short}}) that the Hessian of ${\rm Im} \, \lambda$ at $(0,x_0)$ is negative definite, hence $(0,x_0)$ is a local maximum, in space-time, for ${\rm Im} \, \lambda.$
\end{remark}

%
%
%
%

\begin{hypo}
	\label{hypo.4}
	We assume that $f(t,x,u)$ is quadratic in $u$ locally around $u=\vec{u}_0$, that is
	\be
		\label{hypo.f}
		\d_{u} f(t,x,u) \big|_{u=\vec{u}_0=0} \equiv 0
	\ee
\end{hypo}
	


\subsection{Statement of the results}
\label{subsection.result}


	%
	%

In the statement below we use notations introduced in Definitions \ref{def.gevrey} and \ref{defi.conical}.

\begin{theo}
	\label{theorem.2}
	Under Assumptions {\rm \ref{hypo.1}} and {\rm \ref{hypo.4}}, the Cauchy problem \eqref{Cauchy} is not Hölder well-posed in Gevrey spaces $G^{\sigma}$ for all $\sigma \in(0,1/(m+1))$ where $m$ is the algebraic multiplicity of $\lambda_0$. That is for all $c>0$, $K$ compact of $\R^{d}$ and $\a\in(0,1]$, there are sequences $R_{\e}^{-1} \to 0$ and $\rho_{\e}^{-1} \to 0$, a family of initial conditions $h_{\e}\in G^{\sigma}$ and corresponding solutions $u_{\e}$ of the Cauchy problem on domains $\Omega_{R_{\e},\rho_{\e}}(x_0)$ such that
	\be
		\lim_{\e\to 0} ||u_{\e}||_{L^2(\Omega_{R_{\e},\rho_{\e}}(x_0))} / ||h_{\e}||_{\sigma,c,K}^{\a} = +\infty .
	\ee
	
	\noindent The time of existence of the solutions $u_{\e}$ is at least of order $\e^{1-\sigma} $.
\end{theo}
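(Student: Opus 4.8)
The plan is to follow M\'etivier's scheme (Section~\ref{subsubsection.metivier}) in the Gevrey-adapted form sketched in Sections~\ref{intro.extension}--\ref{intro.endgame}. As in \cite{metivier2005remarks}, it suffices to construct, for every $c>0$, every compact $K$ and every $\a\in(0,1]$, a family of exact analytic solutions $u_\e$ of \eqref{Cauchy} on conical domains $\Omega_{R_\e,\rho_\e}(x_0)$ issued from data $h_\e\in G^\sigma$ for which the stated ratio tends to $+\infty$: comparing an arbitrary solution with the Cauchy--Kovalevskaya solution issued from the constant datum $\vec u_0$ (which exists locally by analyticity of the $A_j$ and $f$), and using the normalisation \eqref{etc} together with Assumption~\ref{hypo.4} --- which ensures that after this reduction the nonlinearity vanishes to second order at the distinguished point --- reduces the negation of Definition~\ref{defi.well_posedness} to this. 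Following \eqref{intro.ansatz}, I will look for $u_\e(t,x)=\u\big(t/\e,\,x,\,(x-x_0)\cdot\xi_0/\e\big)$ with $\u(s,x,\theta)$ periodic in $\theta$; inserting this into \eqref{Cauchy} yields an evolution equation for $\u$ driven by the varying-coefficient operator $\und{A}(\e s,x)\,\d_\theta$ of \eqref{def.undA}, the remainder $G(\e\u)$ gathering the $\d_{x_j}$-terms (with a prefactor $\e$) and the terms that are at least quadratic in $\u$. Introducing the propagator $U(s',s,x,\theta)$ of $\und{A}(\e s,x)\d_\theta$ from Section~\ref{intro.section.bounds}, this becomes the fixed-point equation \eqref{intro.pointfixe.full}, with free solution $\f=U(0,s,x,\theta)\h_\e$ and $\h_\e$ as in \eqref{intro.initial.gevrey}, for a parameter $\delta$ to be fixed in $(\sigma,1/(m+1))$, a non-empty interval since $\sigma<1/(m+1)$ by hypothesis.

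I will then establish the two linear estimates. For the upper bound \eqref{intro.propa.bis} on the Fourier coefficients $U_n(s',s,x)$, valid for $x\in B_{R^{-1}}(x_0)$ and $0<s'<s<\und{s}$ with $\und{s}$ the final observation time, I use the approximate-trigonalisation procedure of \cite{lerner2015onset}: continuity of the spectrum of $\und{A}$ and a quantitative block reduction up to an error $\omega$ yield the algebraic loss $\omega^{-(m-1)}$ and the enlarged rate ${\rm Im}\,\lambda_0+R^{-1}+\e\und{s}+\omega$ (this is Lemma~\ref{lemma.growth.propa}). Projecting $\f$ onto its dominant Fourier mode along $\vec e_+$ and controlling the perturbation $\und{A}(\e s,x)-A_0$ gives the matching lower bound \eqref{intro.lower} on $(0,\und{s})\times B_r(x_0)$, carrying the small amplitude $e^{-\e^{-\delta}}$ (Lemma~\ref{lemma.growth.free.solution}). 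Next I solve \eqref{intro.pointfixe.full} by a long-time Cauchy--Kovalevskaya argument in the scale of majorised analytic functions of Definition~\ref{definition.EE_s}, with model $\Phi(R\sum_j x_j+\e\rho s)$ of radius $R^{-1}(1-\e\rho s)$, positive for $s<(\e\rho)^{-1}$: the gain-by-time-integration estimate displayed after \eqref{intro.def.rhs}, using $\Phi^2\prec\Phi$ from Lemma~\ref{majoring.properties}, shows that each pass through the right-hand side of \eqref{intro.pointfixe.full} costs a factor $\lesssim R\rho^{-1}$ on top of the propagator bound, so a contraction argument produces an analytic solution $\u_\e$ on $\Omega_{R,\rho}$ --- hence the sought exact analytic $u_\e$ solving \eqref{Cauchy} --- as soon as $\und{s}<(\e\rho)^{-1}$.

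It remains to transfer the lower bound to the nonlinear solution. Running the same iteration on the distinguished Fourier mode with the a priori bound \eqref{intro.apriori} estimates $\u_1-\f_1$ by the right-hand side of \eqref{intro.local}; comparing this with \eqref{intro.lower} through \eqref{intro.intermediaire} produces the constraint \eqref{intro.constraint}, while the requirement $(\e\rho)^{-1}>\und{s}\sim\e^{-\delta}$ produces \eqref{intro.constraint.2}. Choosing $\und{s}\sim\e^{-\delta}$ --- with the implied constant fixed, depending on $\a$, so that $(1-\a)\e^{-\delta}<\g_0\und{s}<\e^{-\delta}$, which keeps $\u_\e$ of size $\ll1$ on $[0,\und{s}]$ so that the linear and nonlinear errors stay genuinely subordinate to $\f$ --- together with $R^{-1}\sim r\sim\omega\sim\e^{\delta}$ and $\rho^{-1}$ between $\e^{1-\delta}$ and $\e^{m\delta}$, the constraints \eqref{intro.constraint} and \eqref{intro.constraint.2} hold precisely because $\delta<1/(m+1)$ (one also checks $\e\und{s}^2\ll1$, automatic since $\delta<1/(m+1)\le1/2$). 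With these choices $\u_\e$, hence $u_\e$, satisfies \eqref{intro.growth.gevrey} on an $\e$-shrinking neighbourhood of $(s,x)=(\und{s},x_0)$, so that $\|u_\e\|_{L^2(\Omega_{R_\e,\rho_\e}(x_0))}\gtrsim e^{-\e^{-\delta}}e^{\g_0\und{s}}$ up to polynomial-in-$\e$ factors, while $\|h_\e\|_{\sigma,c,K}\approx e^{-\e^{-\delta}}e^{C\e^{-\sigma}}$ for a constant $C=C(c,\sigma)$ by Lemma~\ref{size.gevrey.exp}. Since $\g_0\und{s}>(1-\a)\e^{-\delta}$ and $\sigma<\delta$, the quotient behaves, up to logarithmic-in-$\e$ corrections in the exponent, like $\exp\!\big((\g_0\und{s}-(1-\a)\e^{-\delta})-\a C\e^{-\sigma}\big)\to+\infty$, which contradicts H\"older well-posedness. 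Finally, in the original time variable the solutions exist for $t<\e\und{s}\sim\e^{1-\delta}$, and since $\delta>\sigma$ this is at least of order $\e^{1-\sigma}$ for $\e$ small, as claimed.

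The main obstacle is the control of the linear errors --- those from $\und{A}(\e s,x)-A_0$ and from the $\e\d_{x_j}$-terms --- over the observation time $\und{s}\sim\e^{-\delta}$, which is a power, not a logarithm, of $1/\e$. This is exactly what forces the use of the genuine varying-coefficient propagator $U$ in place of $e^{sA_0\d_\theta}$, and what couples the smallness of $R^{-1},r,\omega$ to the largeness of $(\e\rho)^{-1}$; the whole argument stands or falls on the fact that \eqref{intro.constraint} and \eqref{intro.constraint.2} leave a non-empty window for $\delta$ up to the threshold $1/(m+1)$. A further delicate point is securing in Lemma~\ref{lemma.growth.propa} only a \emph{polynomial} (in $\omega$) loss in the trigonalisation, uniformly for $x$ in a shrinking ball and for $s$ up to $\und{s}$.
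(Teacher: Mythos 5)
Your proposal is correct and follows essentially the same route as the paper: the $\e$-oscillating ansatz with the varying-coefficient propagator of $\und{A}(\e s,x)\d_\theta$, the trigonalization bounds of Lemma \ref{lemma.growth.propa} against the lower bound of Lemma \ref{lemma.growth.free.solution}, the majoring-series fixed point in the spaces of Definition \ref{definition.EE_s}, the parameter regime $R^{-1}\sim r\sim\omega\sim\e^{\delta}$, $\e^{1-\delta}\ll\rho^{-1}\ll\e^{m\delta}$ yielding $\sigma<\delta<1/(m+1)$, and the final ratio estimate via Lemma \ref{size.gevrey.exp}. The only deviations (dropping the $\e$ prefactor when restating the ansatz, calibrating $\und{s}$ in an $\a$-dependent way rather than taking $M'\approx M$, and summarizing the contraction cost by $R\rho^{-1}$ without spelling out the $\beta$-margin used for the $\d_\theta$/quadratic terms) are cosmetic and do not change the argument.
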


We prove the instability for a larger band of Gevrey indices under stronger assumptions. First, the semisimplicity and non-coalescing  Assumption \ref{hypo.2} allows for a critical index equal to $1/2$:

\begin{theo}
	\label{theorem.3}
	Under Assumptions {\rm \ref{hypo.2}} and {\rm \ref{hypo.4}}, the result of Theorem {\rm \ref{theorem.2}} holds for any Gevrey index $\sigma$ in $(0,1/2)$.
\end{theo}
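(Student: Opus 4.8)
The plan is to follow the scheme of Sections \ref{intro.extension}--\ref{intro.endgame} line by line, the sole---but decisive---difference with Theorem \ref{theorem.2} being that under Assumption \ref{hypo.2} the linear growth bounds can be taken \emph{without} the polynomial loss $\omega^{-(m-1)}$ produced by approximate trigonalization. Concretely: fix $\sigma \in (0,1/2)$, pick $\delta \in (\sigma,1/2)$, and seek solutions of \eqref{Cauchy} under the oscillatory ansatz $u_\e(t,x) = \u(t/\e, x, (x-x_0)\cdot\xi_0/\e)$, with $\u$ periodic in $\theta$ and initial datum $\h_\e$ of amplitude $e^{-\e^{-\delta}}$ as in \eqref{intro.initial.gevrey}. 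Then $\u$ solves the fixed-point equation \eqref{intro.pointfixe.full},
\be
  \u(s,x,\theta) = \f(s,x,\theta) + \int_0^s U(s',s,x,\theta)\,G(\e\u(s',x,\theta))\,ds',
\ee
where $U$ is the propagator of the varying-coefficient operator $\sum_j A_j(\e s,x,\vec u_0)\xi_{0,j}\,\d_\theta = \und A(\e s,x)\d_\theta$, $\f = U(0,s)\h_\e$ is the free solution, and $G$ collects the linear and (quadratic, by Assumption \ref{hypo.4}) nonlinear errors.

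The core of the argument is the sharpening, under Assumption \ref{hypo.2}, of Lemmas \ref{lemma.growth.propa} and \ref{lemma.growth.free.solution}. Since $\lambda_0$ is semisimple with \emph{strictly} maximal imaginary part and lies, near $(0,x_0)$, on an analytic branch $\lambda$ of semisimple eigenvalues of $\und A$ with smooth eigenprojector $\und P$ (Remark \ref{remark.asumm.2}), I would decompose the matrix ODE $\d_s U_n = in\,\und A(\e s,x)\,U_n$ along $\und P$ and $\Id - \und P$: on ${\rm ran}\,\und P$ one has $\und A\,\und P = \lambda\,\und P$, which is uniformly diagonalizable, so this block of $U_n$ has modulus exactly $\exp\!\big(|n|\int_{s'}^{s}{\rm Im}\,\lambda(\e\tau,x)\,d\tau\big)$ with \emph{no} polynomial prefactor; on ${\rm ran}(\Id-\und P)$ the spectrum has imaginary part $\leq {\rm Im}\,\lambda_0 - c$ near $(0,x_0)$ for some $c>0$, so an approximate trigonalization with error $\omega$ gives a bound $\exp(|n|(s-s')({\rm Im}\,\lambda_0 - c + \omega))$, negligible against the $\und P$-block, and the off-diagonal coupling is absorbed by Duhamel. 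Combined with the Lipschitz estimate $|\lambda(\e\tau,x) - \lambda_0| \lesssim \e\tau + |x-x_0| \lesssim \e\und s + R^{-1}$, this yields, for $x\in B_{R^{-1}}(x_0)$ and $s\in(0,\und s)$, the bound \eqref{intro.propa.bis} with $m=1$,
\be
  |U_n(s',s,x)| \lesssim e^{|n|(s-s')({\rm Im}\,\lambda_0 + R^{-1} + \e\und s + \omega)},
\ee
and dually the lower bound \eqref{intro.lower} with $m=1$: $|\f_\e(s,x,\theta)| \gtrsim e^{-\e^{-\delta}}\,e^{s({\rm Im}\,\lambda_0 - r - \e\und s - \omega)}$ on $(0,\und s)\times B_r(x_0)$.

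With these bounds the long-time Cauchy--Kovalevskaya construction of Section \ref{subsection.majoring} applies exactly as for Theorem \ref{theorem.2}: comparing $\u(s)$ to the majoring model $\Phi(R\sum_j x_j + \e\rho s)$ and using $\Phi^2 \prec \Phi$ (Lemma \ref{majoring.properties}), the error integral in the fixed-point equation is controlled at cost $R\rho^{-1}$, which produces a solution $\u_\e$ analytic on $\Omega_{R_\e,\rho_\e}(x_0)$ satisfying $|\u_1(s)-\f_1(s)| \lesssim e^{-\e^{-\delta}}\,e^{s({\rm Im}\,\lambda_0 + R^{-1}+\e\und s+\omega)}\,R\rho^{-1}$.

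It remains to choose the parameters and read off the consistency condition. Set $\und s = c_1\e^{-\delta}$ with $c_1 > 1/\g_0$ fixed, take $R^{-1} = r = \omega$ of order $\e^{\delta}$ (so $\und s\,R^{-1}$, $\und s\,r$, $\und s\,\omega$ are $O(1)$ and $\e\und s^2 = \e^{1-2\delta} \to 0$ because $\delta < 1/2$), and choose $\rho^{-1}$ with $\e^{1-\delta} \ll \rho^{-1} \ll \e^{\delta}$---possible exactly because $1-\delta > \delta$. These are precisely \eqref{intro.constraint.2} and \eqref{intro.constraint} read with $m=1$; hence $\u_1$ inherits the lower bound of $\f_1$ on a space-time neighborhood of $(s,x)=(\und s,x_0)$, so $|u_\e(t,x)| \gtrsim e^{-\e^{-\delta}}e^{\g_0 t/\e}$ near $(c_1\e^{1-\delta},x_0)$. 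Integrating, the numerator is $\|u_\e\|_{L^2(\Omega_{R_\e,\rho_\e}(x_0))} \gtrsim \e^{A}\,e^{(\g_0 c_1 - 1)\e^{-\delta}} \to +\infty$ for some fixed $A>0$, while by Lemma \ref{size.gevrey.exp} the denominator is $\|h_\e\|_{\sigma,c,K}^{\a} \approx e^{\a(-\e^{-\delta} + C_\sigma\e^{-\sigma})} \to 0$ since $\sigma < \delta$ (Remark \ref{remark.size.gevrey}); so the ratio diverges. The main obstacle is the sharp propagator bound: one must verify that semisimplicity together with strict maximality genuinely removes the $\omega^{-(m-1)}$ factor, which requires isolating the diagonal $\lambda$-block through the smooth projector $\und P$ and checking that its coupling with the strictly slower complementary block regenerates no polynomial growth over the long horizon $\und s \sim \e^{-\delta}$, uniformly in the $s$-dependence of $\und A$ and in the proximity to the non-coalescing point $(0,x_0)$.
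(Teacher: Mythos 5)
Your argument follows essentially the same route as the paper's: under Assumption \ref{hypo.2} the second bullets of Lemmas \ref{lemma.growth.propa} and \ref{lemma.growth.free.solution} give precisely the bounds you derive, namely propagator and free-solution estimates with $m=1$ and no trigonalization loss, obtained by smooth partial diagonalization of $\und A$ along the analytic branch $\lambda$ and its projector $\und P$ (your extra care with the complementary block and the Duhamel absorption of the $O(\e)$ coupling is a more explicit version of what the paper delegates to \cite{lerner2015onset}); the rest of your scheme (majoring series, contraction for $T$, and the bookkeeping $\e^{1-\delta}\ll\rho^{-1}\ll R^{-1}$ with $R^{-1},r,\beta$ of order $\e^{\delta}$, consistent exactly when $\delta<1/2$) is the content of Proposition \ref{prop.below.bis} and Section \ref{section.Hadamard}. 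Keeping a small $\omega>0$ for the complementary block is harmless since the prefactor $\omega^{-(m-1)}$ is trivial for $m=1$.

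One step of your endgame, however, does not survive scrutiny: you cannot take $\und s=c_1\e^{-\delta}$ with $c_1>1/\g_0$. The observation time is not a free parameter in this construction: it is capped by the growth time $\und s_1$ defined through $M'=\int_0^{\und s_1}\g(\tau)d\tau$, and the fixed point only closes if the free solution is small in the algebra norm, $|||\f_\e|||\lesssim e^{M'-M(\e)}\ll1$ (constraints \eqref{hypo.coro} and \eqref{third.constraint}), which forces $M'\lesssim M(\e)$ and hence $\und s\approx M(\e)/\g_0=\e^{-\delta}/\g_0$. Pushing to $c_1>1/\g_0$ would make $|||\f_\e|||\sim e^{(\g_0c_1-1)\e^{-\delta}}\gg1$, the contraction estimates of Corollary \ref{prop.estimates.bis} would fail, and $\e\u$ would leave the domain of analyticity of the coefficients; so your claim that the numerator $\|u_\e\|_{L^2(\Omega_{R_\e,\rho_\e})}$ itself diverges is not available. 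This flaw is inessential and easily repaired: with $\und s\approx\e^{-\delta}/\g_0$ the numerator is bounded below by a fixed power of $\e$ (as in Section \ref{section.Hadamard}), and the ratio still blows up because $\|h_\e\|_{\sigma,c,K}\lesssim\e\exp\bigl(c'\e^{-\sigma}-\e^{-\delta}\bigr)\to0$ faster than any power of $\e$, precisely because $\sigma<\delta$.
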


Second, under Assumption \ref{hypo.2}, the null condition (i) and the sign condition (ii) in Assumption {\rm \ref{hypo.3}} allow for the critical index to go from $1/2$ up to $2/3$:

\begin{theo}
	\label{theorem.4}
	Under Assumptions {\rm \ref{hypo.2}}, {\rm \ref{hypo.3}} and {\rm \ref{hypo.4}}, the result of Theorem {\rm \ref{theorem.2}} holds for any Gevrey index $\sigma$ in $(0,2/3)$.
\end{theo}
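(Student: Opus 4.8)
The plan is to follow the scheme outlined in Section~\ref{intro.endgame}, specializing the Cauchy--Kovalevskaya argument of Theorem~\ref{theorem.2} to the favorable spectral situation of Assumptions~\ref{hypo.2} and~\ref{hypo.3}, where the improved linear bounds \eqref{intro.propa.bis.max} and \eqref{intro.lower.max} are available. The overall architecture (ansatz \eqref{intro.ansatz}, reduction to the fixed point \eqref{intro.pointfixe.full}, long-time majoring-series existence, wild-growth lower bound, and the final ratio estimate) is unchanged; what must be redone is the derivation of the sharp growth rates for the propagator $U_n(s',s,x)$ and of the sharp lower bound for the free solution $\f_\e$, and then the bookkeeping of the resulting constraints on $R$, $\rho$, $\omega$, and $\und s$.

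First I would establish \eqref{intro.propa.bis.max}: under Assumption~\ref{hypo.2}, $\lambda_0$ is semisimple, isolated in imaginary part, and extends to a smooth branch $\lambda(t,x)$ with smooth eigenprojector $\underline P(t,x)$ on which $\underline A$ acts as the scalar $\lambda$. Hence the approximate-trigonalization step of \cite{lerner2015onset} is not needed for the dominant block: on $\mathrm{ran}\,\underline P$ the Fourier mode $U_n$ is exactly $\exp\!\big(in\int_{s'}^{s} \lambda(\e\tau,x)\,d\tau\big)$ times the propagator of $\d_\tau \underline P$, which contributes only a bounded (in $n$) factor, while on the complementary block the real part of the spectrum is strictly smaller; the net result is $|U_n(s',s,x)| \lesssim e^{|n|(s-s')(\mathrm{Im}\,\lambda_0 + \omega)}$ with no $\omega^{-(m-1)}$ prefactor. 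Second, for the free solution I would use \eqref{intro.initial.gevrey} with $\h_\e$ aligned along $\vec e_+$ and compute $\f_\e(s,x,\theta) = U(0,s,x,\theta)\h_\e$; the phase picks up $\mathrm{Im}\!\int_0^s \lambda(\e\tau,x)\,d\tau$, and here is where Assumption~\ref{hypo.3} enters: by Remark~\ref{remark.asum.3} the Hessian of $\mathrm{Im}\,\lambda$ at $(0,x_0)$ is negative definite, so along $x=x_0$ one has $\mathrm{Im}\,\lambda(\e\tau,x_0) = \mathrm{Im}\,\lambda_0 - O(\e^2\tau^2)$, whence $\mathrm{Im}\!\int_0^s \lambda(\e\tau,x_0)\,d\tau \geq s\,\mathrm{Im}\,\lambda_0 - C\e^2 s^3$; combined with the $x$-dependence (controlled by $r$ on $B_r(x_0)$) this yields \eqref{intro.lower.max} in the form $|\f_\e(s,x,\theta)| \gtrsim e^{-\e^{-\delta}} e^{s(\mathrm{Im}\,\lambda_0 - \e^2 s^2 - r - \omega)}$. (The reason condition~(i) of Assumption~\ref{hypo.3} is needed is that the linear-in-time term $P_0\d_0\underline A(0,x_0)P_0$ must vanish for the quadratic expansion of $\lambda$ to be the leading correction.)

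Next I would run the fixed-point argument for \eqref{intro.pointfixe.full} exactly as in the proof of Theorem~\ref{theorem.2} but with the improved rates. The majoring-series / regularization-by-time-integration estimate of Section~\ref{intro.ck} gives, for the Fourier mode $n=1$, the local bound \eqref{intro.local} with $\omega^{-2(m-1)}$ replaced by $1$: $|\u_1(s) - \f_1(s)| \lesssim e^{-\e^{-\delta}} e^{s(\mathrm{Im}\,\lambda_0+\omega)} R\rho^{-1}$. Asking this to be dominated by the lower bound \eqref{intro.lower.max} for all $s \in (0,\und s)$ with $\und s \sim \e^{-\delta}$ reduces, as in Section~\ref{intro.endgame}, to the single constraint $e^{\und s(\e^2 \und s^2 + r + \omega)} R\rho^{-1} \ll 1$. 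Since $\und s \sim \e^{-\delta}$, the exponential argument is $O(1)$ provided $\delta < 2/3$ (so that $\e^2 \und s^3 \sim \e^{2-3\delta} \to 0$) and $r,\omega \ll \e^{\delta}$, $R^{-1}\ll\e^\delta$; then $R\rho^{-1}\ll1$ needs $\rho^{-1}\ll R^{-1}\sim\e^\delta$, while the maximal-regularity requirement $(\e\rho)^{-1} > \und s \sim \e^{-\delta}$ gives $\e^{1-\delta}\ll\rho^{-1}$. These are compatible iff $\e^{1-\delta}\ll\e^\delta$, i.e. $\delta<1/2$ --- wait, that is too weak; the point is that here, because the $\omega^{-(m-1)}$ prefactor has disappeared, the intermediate step $\rho^{-1}\ll\e^{m\delta}$ of Section~\ref{intro.endgame} becomes $\rho^{-1}\ll\e^{\delta}$ \emph{with no $m$}, so consistency with $\e^{1-\delta}\ll\rho^{-1}$ reads $\e^{1-\delta}\ll\e^{\delta}$, i.e. $\delta<1/2$; the extra room up to $2/3$ comes precisely from the $\e^2\und s^2$ term in \eqref{intro.lower.max}, which replaces the constraint $\e\und s^2<1$ (giving $\delta<1/2$) by $\e^2\und s^3<1$ (giving $\delta<2/3$). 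Finally, Assumption~\ref{hypo.4} (quadratic $f$) is used, as in Theorem~\ref{theorem.2}, to absorb the source term $f(\e s,x,\e\u)$ into the nonlinear error at cost $O(\e)$ rather than $O(1)$, and the wild-growth ratio \eqref{intro.holder.gevrey} then follows from \eqref{intro.ratio}-type bookkeeping using $\|h_\e\|_{\sigma,c,K}\approx e^{c\,\e^{-\sigma}}$ versus the amplitude $e^{-\e^{-\delta}}$ with $\sigma<\delta$, exactly as in Remark~\ref{remark.size.gevrey}.

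The main obstacle I anticipate is the careful derivation of the sharp lower bound \eqref{intro.lower.max} --- specifically, controlling the second-order Taylor expansion of $\mathrm{Im}\!\int_0^s\lambda(\e\tau,x)\,d\tau$ uniformly in $(s,x)\in(0,\und s)\times B_r(x_0)$ and checking that the cross terms, the $x$-quadratic terms, and the smooth-projector correction $\d_\tau\underline P$ all contribute only $O(\e^2 s^2 + r + \omega)$ to the rate and do not spoil the alignment of $\f_\e$ with $\vec e_+$ that makes the lower bound nondegenerate; equivalently, one must verify that the approximate-diagonalization error for the varying-coefficient operator $\sum_j \underline A_j(\e s,x)\xi_{0,j}\d_\theta$, restricted to the dominant block under Assumptions~\ref{hypo.2}--\ref{hypo.3}, is genuinely of size $\omega$ with no negative power of $\omega$ in front --- this is what ultimately distinguishes Theorem~\ref{theorem.4} from Theorem~\ref{theorem.2}, and the algebraic input is exactly Proposition~2.6 of \cite{texier2004short} together with the sign condition~(ii).
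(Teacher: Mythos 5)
Your overall architecture is the paper's (improved propagator bound from semisimplicity, improved lower bound from the Hessian of ${\rm Im}\,\lambda$, then the same fixed point and ratio bookkeeping), and the two key improvements you identify are the right ones. But there is a genuine gap in the endgame, and you flag it yourself without resolving it. You impose $r,\omega\ll\e^{\delta}$, $R^{-1}\ll\e^{\delta}$ and $\rho^{-1}\ll R^{-1}$, while the maximal-regularity requirement forces $\e^{1-\delta}\ll\rho^{-1}$; this chain is consistent only if $\delta<1/2$, and your proposed repair --- ``consistency reads $\delta<1/2$; the extra room up to $2/3$ comes from the $\e^{2}\und{s}^{2}$ term'' --- is self-contradictory: if the consistency of $R,\rho$ forces $\delta<1/2$, the replacement of $\e\und{s}^{2}<1$ by $\e^{2}\und{s}^{3}<1$ cannot by itself give the range $(0,2/3)$. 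The missing idea is that Assumption \ref{hypo.3} must be used in the \emph{propagator} bound as well, not only in the lower bound for $\f_{\e}$: this is exactly the third bullet of Lemma \ref{lemma.growth.propa}, where the maximality of ${\rm Im}\,\lambda$ at $(0,x_0)$ (Remark \ref{remark.asum.3}) yields $\g^{\sharp}\equiv\g_0$ in \eqref{bound.g.sharp.max}, with no $\omega$, no $\e\tau$ and no $R^{-1}$ correction, uniformly on a ball $B_{R^{-1}}(0)$ of radius \emph{independent of} $\e$. Consequently the constraints $R^{-1}\lesssim\e^{\delta}$ and $\omega\lesssim\e^{\delta}$ simply disappear (the paper notes that \eqref{local.cons} is no longer implied); one may take $R^{-1}$ of order one, the chain $\e^{1-\delta}\ll\rho^{-1}\ll R^{-1}$ is then satisfiable for every $\delta<1$, and the only binding constraints are $\und{s}(\beta+r)\lesssim 1$ and $\e^{2}\und{s}^{3}\lesssim 1$, i.e.\ $\delta<2/3$.

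The gap is not merely cosmetic, because your propagator bound $|U_n|\lesssim e^{|n|(s-s')({\rm Im}\,\lambda_0+\omega)}$ is attributed to Assumption \ref{hypo.2} alone. Without the sign condition, ${\rm Im}\,\lambda(\e\tau,x)\leq{\rm Im}\,\lambda_0+\omega$ holds (by continuity/Lipschitz regularity of the branch $\lambda$) only on a neighborhood whose size is comparable to $\omega$; since you need $\omega\ll\e^{\delta}$ in the exponent over times $\und{s}\approx\e^{-\delta}$, this silently reintroduces $R^{-1}\lesssim\e^{\delta}$ and $\e\und{s}\lesssim\e^{\delta}$, i.e.\ $\delta\leq 1/2$ --- the range of Theorem \ref{theorem.3}, not Theorem \ref{theorem.4}. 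Under Assumptions \ref{hypo.2} and \ref{hypo.3} the correct statement is ${\rm Im}\,\lambda(t,x)\leq{\rm Im}\,\lambda_0$ on a fixed neighborhood, so the upper rate is exactly $\g_0$ and the whole loss $\g^{\sharp}-\g^{\flat}\lesssim \e^{2}\und{s}^{2}+r$ sits in the lower bound, as in \eqref{proof.local.cool.2}. (Minor slips: on the complementary spectral block it is the \emph{imaginary} part that is strictly smaller, not the real part; and condition (i) of Assumption \ref{hypo.3} requires all first derivatives $P_0\,\d_j\und{A}(0,x_0)\,P_0$, $j=0,\dots,d$, to vanish, not only the time derivative.)
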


The rest of the paper is devoted to the proof of Theorems \ref{theorem.2}, \ref{theorem.3} and \ref{theorem.4}.

\begin{remark}
	\label{remark.aprestheo3}
	
	Higher-order null and sign conditions allow for a greater critical index. Precisely, under Assumption {\rm \ref{hypo.2}}, if $(0,x_0)$ is a local maximum for $ {\rm Im}\lambda $, and if there holds $ \lambda(\e s,x_0) - \lambda(0,x_0) = O(\e s)^{2k-1} $, then our proof implies ill-posedness with a critical Gevrey index equal to $2k/(2k+1)$. These null and sign conditions can be expressed in terms of derivatives of $\und{A}$, the partial inverse $A_0^{-1}$ and the projector $P_0$, see {\rm \cite{kato2013perturbation}}, or Remark {\rm 2.7} of {\rm \cite{texier2004short}}. See also Remark {\rm \ref{remark.amelioration}}.


	%
	

\end{remark}

	%
%


\section{Highly oscillating solutions and reduction to a fixed point equation}
\label{section.ansatz}


\subsection{Preparation of the equation}


We want to compare two solutions of \eqref{Cauchy} with initial data $h_1$ and $h_2$ satisfying both
$$
	h_i(x=0) = 0 \qquad \text{for } i=1,2
$$ 

\noindent to fit with $\vec{u}_0 = 0$ in \eqref{etc}. We can choose $h_1$ analytic, which lead by Cauchy-Kovalevskaya theorem to an analytic solution $u_1$ in some small neighborhood of $(0,0)\in\R_{t}\times\R_{x}^{d}$. Then changing $u$ into $u-u_1$ in \eqref{Cauchy} we get a new Cauchy problem
\be
	\label{Cauchy.bis}
	\d_{t}u = \sum_{j}A_{j}(t,x,u)\d_{x_j}{u} + F(t,x,u)u \; , \quad u(0,x) = h(x)
\ee

\noindent with $F(t,x,u)\in\R^{N\times N}$ is also analytic, by analyticity of $f$ and $u_1$. We consider for $h$ small analytical functions satisfying $h_{|x=0}=0$, as perturbations of the trivial datum $h\equiv 0$.


\subsection{Highly oscillating solutions}


As in \cite{metivier2005remarks} we look for high oscillating solutions of \eqref{Cauchy.bis} with the aim of seeing the expected growth. In this view we posit the following ansatz
\be
	\label{ansatz}
	u_{\e}(t,x) = \e \u(t/\e,x,x\cdot\xi/\e) 
\ee

\noindent where the function $\u(s,x,\theta)$ is $2\pi$-periodic in $\theta$. We introduce for any analytical function $H(t,x,u)$ the compact notation
\be
	\label{def.mathbf.H}
	\H(s,x,\u) = H\left( \e s,x,\e\u\right) .
\ee

For $u_{\e}(t,x)$ to be solution of \eqref{Cauchy.bis} it is then sufficient that $\u(s,x,\theta)$ solves the following equation
\be
	\label{ds.u.passager}
	\d_{s} \u = {\mathbf A}\,\d_{\theta} \u + \e\left(\sum_j {\mathbf A}_j \d_{x_j} \u + {\mathbf F} \, \u\right) 
\ee

\noindent where we use the notation \eqref{def.mathbf.H} for the $\mathbf{A}_j$ and $\mathbf{F}$, and $A$ is defined by \eqref{def.A}.

As we focus our study in a neighborhood of the distinguished point $(0,0)\in\R_{t}\times\R^{d}_{x}$ (recall that $x_0 = 0$), we rewrite now \eqref{ds.u.passager} as
\be
	\label{equation.u.compact}
	\d_{s} \u - \und{{\mathbf A}} \d_{\theta} \u = {\mathbf G}(s,x,\u)
\ee

\noindent where $\und{{\mathbf A}}(s,x) = \und{A}(\e s ,x)$ in accordance with notation \eqref{def.mathbf.H}. We define the source term
\be
	\label{def.G}
	{\mathbf G} = \left({\mathbf A} - \und{{\mathbf A}} \right)\,\d_{\theta} \u + \e\left(\sum_j {\mathbf A}_j \d_{x_j} \u + {\mathbf F} \, \u\right)  
\ee

\noindent using the notation \eqref{def.mathbf.H}.


\subsection{Upper bounds for the propagator}


To solve the Cauchy problem of the equation \eqref{equation.u.compact} with initial datum $h_{\e}$ specified in Section \ref{subsection.free.solution}, we first study the case ${\mathbf G} \equiv 0$, that is
\be
	\label{equation.free}
	\d_{s} \u(s,x,\theta) - \und{{\mathbf A}}( s,x) \d_{\theta} \u(s,x,\theta) = 0 .
\ee

\noindent Note that this equation is linear, non autonomous and non scalar. We define the matrix propagator $U(s',s,x,\theta)$ as the solution of
\be
	\label{equation.propagator}
	\d_{s} U(s',s,x,\theta) - \und{{\mathbf A}}( s,x) \d_{\theta} U(s',s,x,\theta) = 0 \; , \quad U(s',s',x,\theta) \equiv {\rm Id} .
\ee

\noindent and $U(s',s,x,\theta)$ is periodic in $\theta$, following the ansatz \eqref{ansatz}. 


\begin{lemma}[Growth of the propagator]
	\label{lemma.growth.propa}
	The matrix propagator $U(s',s,x,\theta)$ satisfies the following growth of its Fourier modes in the $\theta$ variable:
	\be
		\label{growth.propa.n}
		|U_n(s',s,x)| \lesssim \omega^{-(m-1)}\,\exp\left( \int_{s'}^{s} \g^{\sharp}(\tau \,;R,\omega) d\tau \, |n|\right) \quad , \quad \forall \, 0 \leq s' \leq s \text{  and  } \forall \, n\in\Z .
	\ee
	
	\begin{itemize}
		\item Under Assumption {\rm \ref{hypo.1}}, bound \eqref{growth.propa.n} holds with
		\be
			\label{bound.g.sharp}
			\g^{\sharp}(\tau \,;R,\omega) = \g_0 + \e \tau + R^{-1} + \omega 
		\ee
	
		\noindent where $\g_0$ is defined in Notation {\rm \ref{notation.elli}}, $m \geq 1$ is the algebraic multiplicity of $\lambda_0$. The bounds hold for $\omega >0$ small enough, uniformly in $x$ in the ball $B_{R^{-1}}(0)$. 
		
		\item Under Assumption {\rm \ref{hypo.2}}, bound \eqref{growth.propa.n} holds with $ m =1$ and 
		\be
			\label{bound.g.sharp.bis}
			\g^{\sharp}(\tau \,;R,\omega) = \g_0 + \e \tau + R^{-1}
		\ee
		
		\noindent with $\omega = 0$, both uniformly in $x$ in the ball $B_{R^{-1}}(0)$.
	
		\item Under Assumptions {\rm \ref{hypo.2}} and {\rm \ref{hypo.3}}, bound \eqref{growth.propa.n} holds with $\omega = 1$ and 
		\be
			\label{bound.g.sharp.max}
			\g^{\sharp}(\tau \,;R,\omega) = \g_0
		\ee
		
		\noindent The bounds hold uniformly in $x$ in the ball $B_{R^{-1}}(0)$. 
	
	\end{itemize}
	
\end{lemma}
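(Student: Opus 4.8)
\emph{Proof proposal.} The plan is to reduce the claim to a classical growth estimate for the propagator of a linear non-autonomous matrix ODE, and then to obtain that estimate by a symmetrizer argument built on an approximate trigonalization of $\und{{\mathbf A}}$. Since $\und{{\mathbf A}}(s,x) = \und{A}(\e s,x)$ does not depend on $\theta$, I would Fourier-expand \eqref{equation.propagator} in $\theta$: the $n$-th Fourier coefficient $U_n(s',s,x)$ solves
\be
	\d_s U_n(s',s,x) = i n\, \und{{\mathbf A}}(s,x)\, U_n(s',s,x) \,, \qquad U_n(s',s',x) = \Id ,
\ee
with $x\in B_{R^{-1}}(0)$ a parameter and $\und{{\mathbf A}}(s,x) = A_0 + O(\e s + |x|)$. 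It then suffices to prove, uniformly in $x$ and $n$, the operator-norm bound $|U_n(s',s,x)| \lesssim \omega^{-(m-1)}\exp\!\big(|n|\int_{s'}^{s}\g^{\sharp}(\tau)\,d\tau\big)$.

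Next I would construct a positive-definite symmetrizer $S(s,x)$, of the form $S = (QD_\omega)^{-*}(QD_\omega)^{-1}$, with a conditioning bound $\kappa(S)\lesssim\omega^{-2(m-1)}$ and a one-sided bound $\pm\,{\rm Im}\big(S(s,x)\und{{\mathbf A}}(s,x)\big)\le\g^{\sharp}(s)\,S(s,x)$ in the sense of quadratic forms. Here $Q(s,x)$ is a smooth \emph{approximate} trigonalizer of the family $\und{{\mathbf A}}$, as in \cite{lerner2015onset} and \cite{metivier2005remarks}, so that $Q^{-1}\und{{\mathbf A}}Q$ is upper triangular with a \emph{smooth} diagonal $D(s,x)$ and a strictly-upper nilpotent part $N(s,x)$, up to a remainder smaller than any prescribed power of $\omega$; and $D_\omega = {\rm diag}(1,\omega,\dots,\omega^{m-1})$ rescales the Jordan block(s) of $\lambda_0$ (of size at most $m$) so that $N$ becomes of size $O(\omega)$, at the price $\kappa(D_\omega)=\omega^{-(m-1)}$. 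Since $D(0,x_0)$ carries the eigenvalues of $A_0$, all with imaginary part $\le\g_0$, smoothness gives ${\rm Im}\,D_{jj}(s,x)\le\g_0 + C(\e s + |x|)$, which together with the $O(\omega)$ nilpotent part yields exactly \eqref{bound.g.sharp} (after normalizing the harmless constant and using $|x|<R^{-1}$), while the prefactor comes from $\kappa(S)^{1/2}=\kappa(QD_\omega)\lesssim\omega^{-(m-1)}$. An energy estimate then closes the argument: for $w(s)=U_n(s',s,x)v$ one has
\be
	\frac{d}{ds}\langle S w, w\rangle = \langle (\d_s S) w, w\rangle - 2n\,\langle {\rm Im}(S\und{{\mathbf A}})\, w, w\rangle \le \big(2|n|\g^{\sharp}(s) + o(1)\big)\langle S w, w\rangle ,
\ee
the $\d_s S$ term being lower order as it carries a factor $\e$; Gronwall gives $|w(s)|_S\le e^{|n|\int_{s'}^{s}\g^{\sharp}}|v|_S$, and converting back to the Euclidean norm costs $\kappa(S)^{1/2}\lesssim\omega^{-(m-1)}$.

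The two refined bounds \eqref{bound.g.sharp.bis} and \eqref{bound.g.sharp.max} correspond to simplifications of this construction. Under Assumption \ref{hypo.2}, Remark \ref{remark.asumm.2} furnishes a smooth eigenprojector $\und P$ with $\und{{\mathbf A}}\,\und P = \lambda\,\und P$, so on that block $\und{{\mathbf A}}$ is exactly scalar (no nilpotent part, $m=1$, and one may take $\omega=0$); on the complementary block all eigenvalues have imaginary part $<\g_0$ at $(0,x_0)$, and since the gap is strict any polynomial-in-$n$ growth there is absorbed into $e^{|n|(s-s')\g_0}$ with a mere constant, leaving $\g^\sharp = \g_0 + \e\tau + R^{-1}$. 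Under Assumptions \ref{hypo.2} and \ref{hypo.3}, Remark \ref{remark.asum.3} shows that $(0,x_0)$ is a local maximum of ${\rm Im}\,\lambda$, hence ${\rm Im}\,\lambda(\e s,x)\le\g_0$ on a neighborhood of the origin; the $\e\tau$ and $R^{-1}$ corrections on the dominant block then drop out, leaving $\g^\sharp=\g_0$ (the value $\omega=1$ merely records an $O(1)$ prefactor, harmless since $m=1$).

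The main obstacle is the joint control in the first bound: one must arrange that the conditioning blow-up $\omega^{-(m-1)}$ forced by the Jordan scaling appears \emph{only in the prefactor} and not in the exponential rate $\g^\sharp$, since a leak of even a single power of $\omega$ into $\g^\sharp$ would, through the endgame of Section \ref{intro.endgame}, degrade the admissible Gevrey index below $1/(m+1)$. This is precisely why one cannot merely freeze the symmetrizer at $A_0$ and treat $\und{{\mathbf A}}-A_0$ as a generic perturbation — its strictly-lower part, after the $D_\omega$-rescaling, would contaminate the rate — but must instead trigonalize $\und{{\mathbf A}}$ itself so that the $x$- and $t$-variation is carried by the \emph{diagonal}, which commutes with $D_\omega$. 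Managing the non-smoothness of eigenvalues and eigenprojectors of $\und{{\mathbf A}}$ at coalescing points, which forces the trigonalization to be only approximate with a carefully tuned error threshold, is the delicate step and is where the machinery of \cite{lerner2015onset} enters.
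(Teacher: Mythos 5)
Your proposal follows essentially the same route as the paper: the paper's proof likewise reduces \eqref{equation.propagator} by Fourier expansion in $\theta$ to the non-autonomous ODE $\d_s U_n = in\,\und{{\mathbf A}}\,U_n$ and then delegates the first bound to the purely linear-algebraic, approximate-trigonalization arguments of Sections 4.2--4.3 of \cite{lerner2015onset} (which your symmetrizer/energy estimate spells out), while the second and third bounds are obtained exactly as you do, from the smooth partial diagonalization granted by Assumption \ref{hypo.2} and from the maximality of ${\rm Im}\,\lambda$ at $(0,x_0)$ recorded in Remark \ref{remark.asum.3}. The paper itself gives no more detail than this citation, so your explicit construction expands on, rather than departs from, its argument.
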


\noindent In the framework of Assumption \ref{hypo.1}, the parameter $\omega$ is chosen in Proposition \ref{prop.below}.

\begin{proof}
	
	As $\und{A}(t,x)$ does not depend on $\theta$, equation \eqref{equation.propagator} reads in Fourier transform in $\theta$ as
	$$
		\d_{s}U_n(s',s,x) - in \und{A}(\e s , x) U_n(s',s,x) \; , \quad U_n(s',s,x) = {\rm Id}
	$$
	
	\noindent where $U_n$ is the $n$-th Fourier component of $U(\theta)$. That implies that operator $U(\theta)$ acts diagonally on each Fourier components.
	
	The bounds \eqref{growth.propa.n} - \eqref{bound.g.sharp} follow from elementary, and purely linear-algebraic, arguments detailed in Sections (4.2) and (4.3) of \cite{lerner2015onset}. 
	
	The bounds \eqref{growth.propa.n} - \eqref{bound.g.sharp.bis} follow from a smooth partial diagonalization of symbol $\und{A}$ over the eigenspace associated with $\lambda$. In particular, there is no diagonalization or trigonalization error, hence $m = 1$ in \eqref{growth.propa.n} and $\omega = 0$ in \eqref{bound.g.sharp.bis}.
	
	The bounds \eqref{growth.propa.n} - \eqref{bound.g.sharp.max} follow from a smooth partial diagonalization as described above, and the fact that the imaginary part of $\lambda$ is maximal at $(t,x) = (0,x_0),$ as described in Remark \ref{remark.asum.3}.
	
\end{proof}

\subsection{Free solutions}
\label{subsection.free.solution}


After getting the previous upper bounds for the propagator, we seek initial conditions $h_{\e}$ that achieve the maximal growth. For this purpose, following again \cite{metivier2005remarks} we introduce the following high-oscillating, small and well-polarized initial data
\be
	\label{def.initial.data}
	h_{\e}(x) = \e\,e^{-M(\e)} {\rm Re}\left(e^{-ix\cdot\xi_0/\e} \vec{e}_{+} + e^{ix\cdot\xi_0/\e} \vec{e}_{-}\right)
\ee

\noindent which correspond in the ansatz \eqref{ansatz} of high-oscillating solutions to
\be
	\label{def.initial.data.ansatz}
	\h_{\e}(x,\theta) = e^{-M(\e)} {\rm Re}\left(e^{-i\theta} \vec{e}_{+} + e^{i\theta} \vec{e}_{-}\right) .
\ee

\noindent Here $\vec{e}_{+}$ is defined in Notation \ref{notation.elli}, and $\vec{e}_{-} = \overline{\vec{e}_{+}}$. The parameter $M(\e)$ is large in the limit $\e\to 0$, chosen such that the Gevrey norm of $h_{\e}$ is small. We introduce also 

\be
	\label{free.solution}
	\f_{\e}(s,x,\theta) = U(0,s,x,\theta) \h_{\e}(x,\theta)
\ee

\noindent which we call the free solution of equation \eqref{equation.u.compact} as it solves the equation for ${\mathbf G} \equiv 0$.

\subsubsection{Growth of the free solution}

\begin{lemma}[Growth of the free solution]
	\label{lemma.growth.free.solution}
	There holds
	\be
		\label{growth.free.solution}
		|\f_{\e}(s,x,\theta)| \gtrsim \omega^{-(m-1)}\,e^{-M(\e)}\exp\left( \int_{0}^{s} \g^{\flat}(\tau \,;r,\omega) d\tau  \right).
	\ee
	
	\begin{itemize} 
		\item Under Assumption {\rm \ref{hypo.1}}, bound \eqref{growth.free.solution} holds with
			\be
				\label{bound.g.flat}
				\g^{\flat}(\tau \,; r,\omega) = \g_0 - \e \tau - r - \omega ,
			\ee
	
			\noindent pointwise in $(s,x,\theta)\in[0,\und{s})\times B_r(x_0)\times\mathbb{T}$. 
		
		\item Under Assumption {\rm \ref{hypo.2}}, bound \eqref{growth.free.solution} holds with $m=1$ and
			\be
				\label{bound.g.flat.bis}
				\g^{\flat}(\tau \,; r,\omega) = \g_0 - \e \tau - r ,
			\ee
	
			\noindent with $\omega = 0$, pointwise in $(s,x,\theta)\in[0,\und{s})\times B_r(0)\times\mathbb{T}$. 
	
		\item Under Assumptions {\rm \ref{hypo.2}} and {\rm \ref{hypo.3}}, bound \eqref{growth.free.solution} holds with $\omega = 1$ and
			\be
				\label{bound.g.flat.max}
				\g^{\flat}(\tau \,; r,\omega) = {\rm Im}\,\lambda(\e\tau,0) - r .
			\ee
			
			\noindent pointwise in $(s,x,\theta)\in[0,\und{s})\times B_r(0)\times\mathbb{T}$. 
	
	\end{itemize}
\end{lemma}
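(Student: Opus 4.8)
The plan is to unfold the Fourier series in $\theta$, reduce the claim to a lower bound on a single matrix--ODE solution handled exactly as in Lemma~\ref{lemma.growth.propa}, and then recover uniformity in $\theta$ from the polarization of $\h_\e$. Since $\und{\A}(s,x)=\und A(\e s,x)$ does not depend on $\theta$, the propagator $U(0,s,x,\theta)$ acts diagonally on the Fourier modes in $\theta$, and the datum $\h_\e$ of \eqref{def.initial.data.ansatz} carries only the modes $n=\pm1$. Hence $\f_{\e}(s,x,\theta)=e^{-M(\e)}\big(U_{-1}(0,s,x)\vec e_{+}\,e^{-i\theta}+U_{1}(0,s,x)\vec e_{-}\,e^{i\theta}\big)$, and since $\und A$ is real, $U_{-1}=\overline{U_{1}}$, so with $\vec e_{-}=\overline{\vec e_{+}}$,
$$
	\f_{\e}(s,x,\theta)=2e^{-M(\e)}\,{\rm Re}\big(W(s,x)e^{i\theta}\big),\qquad W(s,x):=U_{1}(0,s,x)\vec e_{-},
$$
where $\d_{s}W=i\,\und A(\e s,x)W$, $W(0,x)=\vec e_{-}$. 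It thus suffices to bound $|{\rm Re}(W(s,x)e^{i\theta})|$ from below, uniformly in $\theta\in\mathbb T$.

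To estimate $W$ I would run the very argument that proves Lemma~\ref{lemma.growth.propa}, but retaining the leading term rather than only its size. At $(s,x)=(0,x_0)$ the matrix $i\und A$ is $iA_0$, and $\vec e_{-}$ is its eigenvector for the eigenvalue $i\overline{\lambda_0}$, whose real part ${\rm Im}\,\lambda_0=\g_0$ is strictly maximal in ${\rm Sp}(iA_0)$. Under Assumption~\ref{hypo.1} this means the approximate trigonalization of Sections~(4.2) and~(4.3) of \cite{lerner2015onset}; under Assumption~\ref{hypo.2} (resp.\ Assumptions~\ref{hypo.2}--\ref{hypo.3}) the smooth partial diagonalization along the eigenbranch $\lambda$ through $\lambda_0$, with no remainder and $m=1$. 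Projecting $W$ on the dominant block, in which $\vec e_{-}$ sits at the base point, and propagating, the strict spectral gap and the slow drift $\d_s\und A=O(\e)$ keep the non--dominant part of $W$ subdominant on $[0,\und s)\times B_r(0)$, giving
$$
	W(s,x)=e^{\,i\int_0^{s}\overline{\lambda}(\e\t,x)\,d\t}\,\vec v(s,x)+E(s,x),\qquad |\vec v(s,x)|\gtrsim1,\quad \vec v(s,x)\to\vec e_{-}\ \text{as }(r,\e\und s,\omega)\to0,
$$
with $\big|e^{i\int_0^{s}\overline\lambda\,d\t}\big|=\exp\big(\int_0^{s}{\rm Im}\,\lambda(\e\t,x)\,d\t\big)$, the remainder $E$ subdominant for the parameter choices of Proposition~\ref{prop.below}, and the factor $\omega^{-(m-1)}$ and the loss $-\omega$ in the rate carried over from Lemma~\ref{lemma.growth.propa}.

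To get uniformity in $\theta$ I would polarize. Writing $\int_0^{s}\overline\lambda(\e\t,x)\,d\t=\psi(s,x)-i\int_0^{s}{\rm Im}\,\lambda(\e\t,x)\,d\t$ with $\psi$ real, one has ${\rm Re}(W(s,x)e^{i\theta})=e^{\int_0^{s}{\rm Im}\,\lambda(\e\t,x)\,d\t}\big(\cos\phi\,{\rm Re}\,\vec v(s,x)-\sin\phi\,{\rm Im}\,\vec v(s,x)\big)+{\rm Re}(E(s,x)e^{i\theta})$ with $\phi:=\psi(s,x)+\theta$. For a complex vector $\vec v$, $\min_{\phi}|\cos\phi\,{\rm Re}\,\vec v-\sin\phi\,{\rm Im}\,\vec v|^{2}$ is the least eigenvalue of the Gram matrix of $({\rm Re}\,\vec v,{\rm Im}\,\vec v)$, whose determinant $|{\rm Re}\,\vec v|^{2}|{\rm Im}\,\vec v|^{2}-({\rm Re}\,\vec v\cdot{\rm Im}\,\vec v)^{2}$ is strictly positive at $\vec v=\vec e_{-}$: otherwise ${\rm Re}\,\vec e_{-}$ and ${\rm Im}\,\vec e_{-}$ are colinear, so $\vec e_{-}$ is a complex multiple of a fixed real vector, which would force the real matrix $A_0$ to have that real vector as an eigenvector, contradicting ${\rm Im}\,\lambda_0=\g_0>0$. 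By continuity the determinant stays $\gtrsim1$ at $\vec v=\vec v(s,x)$ once $r,\e\und s,\omega$ are small, so the leading term has modulus $\gtrsim e^{\int_0^{s}{\rm Im}\,\lambda(\e\t,x)\,d\t}$ uniformly in $\phi$, hence in $\theta$; absorbing $E$ yields $|\f_{\e}(s,x,\theta)|\gtrsim\omega^{-(m-1)}e^{-M(\e)}\exp\big(\int_0^{s}({\rm Im}\,\lambda(\e\t,x)-\omega)\,d\t\big)$, uniformly in $\theta$. It remains to lower--bound ${\rm Im}\,\lambda(\e\t,x)$ on $B_r(0)$: under Assumption~\ref{hypo.1}, Lipschitz continuity of the branch gives ${\rm Im}\,\lambda(\e\t,x)\ge\g_0-\e\t-r$, which is \eqref{bound.g.flat}; under Assumption~\ref{hypo.2}, the same with $m=1$, $\omega=0$, which is \eqref{bound.g.flat.bis}; under Assumptions~\ref{hypo.2}--\ref{hypo.3}, keeping only ${\rm Im}\,\lambda(\e\t,x)\ge{\rm Im}\,\lambda(\e\t,0)-r$ and $m=1$, $\omega=1$ (harmless, no remainder), which is \eqref{bound.g.flat.max} — where Remark~\ref{remark.asum.3} ($(0,x_0)$ a strict local maximum of ${\rm Im}\,\lambda$) is precisely what will let the refined endgame profit from this last form.

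The main obstacle is the control of $E$ — the non--dominant components of $W$ and the within--block perturbation — over the long interval $[0,\und s)$ with $\und s$ of order $\e^{-\delta}$: its growth rate must stay strictly below $\g^{\flat}$ throughout, otherwise the leading exponential is drowned. This is the same balance that already had to be struck in Lemma~\ref{lemma.growth.propa}, and it is closed by the choice of $\omega$, $r$, $R^{-1}$ in Proposition~\ref{prop.below}, so that $\int_0^{\und s}\omega\,d\t$ remains bounded and the spectral gap absorbs the $O(\e)$ drift; under Assumptions~\ref{hypo.2}/\ref{hypo.3} the exact diagonalization removes $E$ altogether, which is why $\omega$ can be dropped there.
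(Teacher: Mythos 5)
Your overall route is the paper's own: the proof in the text consists precisely of the two observations you elaborate, namely that the datum \eqref{def.initial.data.ansatz} is exactly localized on the Fourier modes $n=\pm1$ and polarized along $\vec e_{\pm}$, so that $\f_{\e}$ reduces to the single ODE solution $W(s,x)=U_{1}(0,s,x)\vec e_{-}$, and that the lower bound for $W$ follows from the same linear-algebraic arguments (approximate trigonalization under Assumption \ref{hypo.1}, smooth partial diagonalization under Assumptions \ref{hypo.2} and \ref{hypo.2}--\ref{hypo.3}) of \cite{lerner2015onset} already invoked for Lemma \ref{lemma.growth.propa}. Your Gram-matrix argument for uniformity in $\theta$ (linear independence of ${\rm Re}\,\vec e_{-}$ and ${\rm Im}\,\vec e_{-}$, because $A_0$ is real and $\lambda_0\notin\R$) is correct and supplies a detail the paper leaves implicit.

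There is, however, one genuine discrepancy, concerning the prefactor $\omega^{-(m-1)}$ in \eqref{growth.free.solution} under Assumption \ref{hypo.1} with $m\geq 2$. You assert that this factor is ``carried over from Lemma \ref{lemma.growth.propa}'', but that lemma is an \emph{upper} bound, and a loss $\omega^{-(m-1)}$ in an upper bound does not convert into a gain $\omega^{-(m-1)}$ in a lower bound. Your own decomposition $W=e^{i\int_0^{s}\overline{\lambda}(\e\tau,x)d\tau}\,\vec v+E$ with $|\vec v|\gtrsim 1$ yields only $|\f_{\e}(s,x,\theta)|\gtrsim e^{-M(\e)}\exp\bigl(\int_0^{s}\g^{\flat}(\tau)d\tau\bigr)$, i.e.\ the stated estimate \emph{without} the large factor $\omega^{-(m-1)}=\e^{-\delta(m-1)}$ (indeed the bound with that factor cannot hold pointwise on all of $[0,\und{s})$, since at $s=0$ one has $|\f_{\e}|=|\h_{\e}|\approx e^{-M(\e)}$). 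This is not cosmetic for the rest of the argument: in Section \ref{subsection.below} the factor $\omega^{-(m-1)}$ in the lower bound for $\f_{\e}$ is cancelled against the same factor in the upper bound \eqref{esti.local.2} for $\u_{\e}-\f_{\e}$; with the weaker bound one needs instead $K(\e)\,\omega^{-(m-1)}\exp\bigl(\int_0^{\und{s}}(\g-\g^{\flat})d\tau\bigr)\to 0$, which tightens the parameter constraints of Proposition \ref{prop.below} when $m\geq 2$. So either you must genuinely extract the amplification $\omega^{-(m-1)}$ from the trigonalization near the final times (your leading-term/remainder splitting does not do this), or you must state the weaker bound and redo the ensuing bookkeeping. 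For $m=1$, i.e.\ under Assumptions \ref{hypo.2} and \ref{hypo.2}--\ref{hypo.3}, the factor is trivial and your argument fully proves the statement.
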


\begin{proof}
	
	Our choice of datum \eqref{def.initial.data}-\eqref{free.solution} allows an exact localization at the distinguished frequency $\xi_0$. Similarly to the proof of Lemma \ref{lemma.growth.propa}, the lower bounds follow from linear algebraic arguments detailed in \cite{lerner2015onset}.

\end{proof}

\subsubsection{Smallness of the free solution and Gevrey index}

The size of the Gevrey-$\sigma$ norm of the initial data $h_{\e}$ is linked to the exponent $M(\e)$ as shown by the following

\begin{lemma}
	\label{size.gevrey.exp}
	For any $\sigma\in(0,1)$, $c>0$ and $K$ a compact of $\R^{d}$ there holds
	\be
		\label{esti.gevrey.free}
		 \dsp ||h_{\e}||_{\sigma,c,K} \lesssim \e\,\exp\left( - M(\e) + \frac{\e^{-\sigma}}{\sigma c^{\sigma}}\right) .
	\ee
	
	\noindent We emphasize that the constant in the previous inequality does not depend on $K$.
\end{lemma}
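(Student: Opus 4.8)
The plan is to differentiate the explicit datum \eqref{def.initial.data}, bound each derivative in $L^\infty$, and then reduce the supremum defining the Gevrey norm to a one-variable optimization settled by a crude Stirling estimate. First I would note that, up to the scalar prefactor $\e\,e^{-M(\e)}$, the function $h_\e$ is the real part of a linear combination with constant vector coefficients $\vec{e}_{\pm}$ of the two plane waves $e^{\mp i x\cdot\xi_0/\e}$. Since $\d^\a\big(e^{\mp i x\cdot\xi_0/\e}\big) = (\mp i/\e)^{|\a|}\,\xi_0^{\a}\,e^{\mp i x\cdot\xi_0/\e}$ and $|\xi_0^{\a}| = \prod_j |\xi_{0,j}|^{\a_j}\le 1$ because $\xi_0\in\mathbb{S}^{d-1}$ forces $|\xi_{0,j}|\le 1$, and since taking real parts does not increase the sup norm, this yields
\[
	\big|\d^\a h_\e\big|_{L^\infty(\R^d)} \lesssim \e\,e^{-M(\e)}\,\e^{-|\a|},\qquad \forall\,\a\in\N^d.
\]
In particular the bound holds on all of $\R^d$, so nothing depends on the compact $K$ — which is exactly the uniformity claimed in the lemma.

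Next I would insert this into the definition of $||\cdot||_{\sigma,c,K}$ and group multi-indices by their length $n=|\a|$ (legitimate because the norm is a supremum, not a sum, so no combinatorial factor appears), obtaining
\[
	||h_\e||_{\sigma,c,K} \lesssim \e\,e^{-M(\e)}\,\sup_{n\ge 0}\ \frac{t^{\,n}}{(n!)^{1/\sigma}},\qquad t := (\e c)^{-1}.
\]
Since $t^{\sigma}/\sigma = \e^{-\sigma}/(\sigma c^{\sigma})$, the estimate \eqref{esti.gevrey.free} reduces to the scalar claim $\sup_{n\ge 0} t^{n}(n!)^{-1/\sigma}\le \exp\!\big(t^{\sigma}/\sigma\big)$. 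For this I would use $n!\ge (n/e)^{n}$ (immediate from $e^{n}\ge n^{n}/n!$), so that $t^{n}(n!)^{-1/\sigma}\le \big(t\,e^{1/\sigma}\,n^{-1/\sigma}\big)^{n}$, whose logarithm $n\big(\log t + \sigma^{-1} - \sigma^{-1}\log n\big)$ is concave in $n>0$, with maximum at $n_\ast = t^{\sigma}$ and maximal value exactly $t^{\sigma}/\sigma$; the supremum over integers is no larger, and the $n=0$ term contributes only $1$.

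There is no serious obstacle here: the only mildly delicate point is the optimization in the last step, and even there the elementary bound $n!\ge (n/e)^{n}$ suffices, so no sharp Stirling asymptotics are needed. One could equally avoid logarithms and simply test $n\approx t^{\sigma}$ by hand, but the concavity argument is the cleanest way to see that $t^{\sigma}/\sigma$ is the correct exponent and that the implicit constant does not depend on $K$.
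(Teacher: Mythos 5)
Your proof is correct and follows essentially the same route as the paper's: differentiate the plane waves to get the uniform ($K$-independent) bound $|\d^{\a}h_{\e}|\lesssim \e\,e^{-M(\e)}\e^{-|\a|}$, reduce the Gevrey norm to the scalar supremum $\sup_{n\ge 0} t^{n}(n!)^{-1/\sigma}$ with $t=(\e c)^{-1}$, and bound it by $e^{t^{\sigma}/\sigma}=\exp\big(\e^{-\sigma}/(\sigma c^{\sigma})\big)$. The only cosmetic difference is the last step: you optimize over $n$ via $n!\ge (n/e)^{n}$, whereas the paper uses $s^{n}/n!\le e^{s}$ with the choice $s=\e^{-\sigma}c^{-\sigma}$ --- equivalent elementary estimates yielding the same exponent.
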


\begin{proof}
	First we have
	$$ 
		\dsp \d_{x}^{k} e^{\pm ix\cdot\xi_{0}/\e} = \left(\pm i\xi_{0}/\e\right)^{k}e^{\pm ix\cdot\xi_{0}/\e} \; ,\quad \forall k\in\N^{d} \;,\quad \forall x\in\R^{d} 
	$$
	
	\noindent using notation \eqref{notation.product} and then
	$$ 
		\dsp |\d_{x}^{k} e^{\pm ix\cdot\xi_{0}/\e}| \leq C_d \, \e^{-|k|} \; ,\quad \forall k\in\N^{d} \;,\quad \forall x\in\R^{d} 
	$$

	\noindent as $|\xi_{0}| = 1$, with $C_d>0$ a constant depending only of the dimension $d$. So that for any compact $K$ of $\R^{d}$ and by definition \eqref{def.initial.data} of the initial data $h_{\e}$, there holds
	$$ 
		\dsp c^{-|k|}|k|!^{-1/\sigma} |\d_{x}^{k} h_{\e} |_{L^{\infty}(K)} \lesssim \e\, e^{-M(\e)} \,\e^{-|k|}\,c^{-|k|} |k|!^{-1/\sigma} \;,\quad \forall k\in\N^{d} .
	$$

	\noindent By Definition \ref{def.gevrey} of the Gevrey norms, this implies
	$$
		\dsp ||h_{\e}||_{\sigma,c,K} \lesssim \e e^{-M(\e)} \, \sup_{k\in\N^{d}} \e^{-|k|}\,c^{-|k|} |k|!^{-1/\sigma} .
	$$
	
	\noindent For any $t>0$ we have
	$$
		\frac{t^{|k|}}{|k|!} \leq e^{t} \quad , \quad \forall t>0 \;, \quad \forall k\in\N^{d} 
	$$
	
	\noindent and note that the loss is smaller as $|k|$ is larger. This leads to
	$$
		||h_{\e}||_{\sigma,c,K} \lesssim \e e^{-M(\e)} \, \sup_{k\in\N^{d}} \e^{-|k|}\,c^{-|k|} \left(t^{|k|}e^{-t}\right)^{-1/\sigma}
	$$

	\noindent and then by putting $t = \e^{-\sigma}c^{-\sigma}$ into this last inequality, we finally obtain the inequality \eqref{esti.gevrey.free}.
	
\end{proof}

\noindent As we need $h_{\e}$ to be small both in Gevrey-$\sigma$ norm and in amplitude, we posit
\be
	\label{size.gevrey}
	M(\e) = \e^{-\delta} , \quad \delta \in(\sigma,1) .
\ee

\begin{remark}
	\label{remark.size.gevrey}
	With the previous definition \eqref{size.gevrey}, the initial data $h_{\e}$ is exponentially small, both in Gevrey-$\sigma$ norm and in absolute value. This last point is of importance, as we need $h_{\e}$ to be small enough to see the exponential growth of the solution it generates in a sufficiently long time $T(\e)$ to be defined later. A constraint on this final time will lead to a constraint on the size $e^{-M(\e)}$ of $h_{\e}$, and then to the constraint $\sigma < \delta$ (see \eqref{size.gevrey}) bearing on the admissible Gevrey regularity.
\end{remark}


\subsection{Fixed point equation}


Using the propagator $U(s',s,\theta)$, the free solution \eqref{def.initial.data} and the Duhamel formula, we can express now \eqref{equation.u.compact} as the fixed point equation
\be
	\label{fixed.point.equation}
	\u(s,x,\theta) = \f_{\e}(s,x,\theta) + \int_{0}^{s} U(s',s,x,\theta) {\mathbf G}(s',\u(s',x,\theta)) ds'
\ee

\noindent where ${\mathbf G}(\u)$ is defined by \eqref{def.G}. We denote the integral term
\be
	\label{defi.T}
	T(s,\u) = \int_{0}^{s} U(s',s) {\mathbf G}(s',\u(s')) ds'  
\ee

\noindent which we split into three parts thanks to definition \eqref{def.G} like
\begin{eqnarray}
	T(s,\u) & = & \int_{0}^{s} U(s',s) \left[ \left( {\mathbf A} - \und{{\mathbf A}} \right)\,\d_{\theta} \u + \e\left(\sum_j {\mathbf A}_j \d_{x_j} \u + {\mathbf F} \, \u\right)  \right] ds' \nonumber \\
	& = & T^{[\theta]}(s,\u) + T^{[x]}(s,\u) + T^{[\u]}(s,\u) \label{defi.T.decoupage}
\end{eqnarray}

\noindent where we define
\begin{eqnarray}
	T^{[\theta]}(s,\u) & = & \int_{0}^{s} U(s',s) \,\left( {\mathbf A} - \und{{\mathbf A}} \right) \,\d_{\theta} \u(s') ds' \label{def.T_theta}\\
	T^{[x]}(s,\u) & = & \int_{0}^{s} U(s',s)\, \sum_j\left(\e\A_j(s',\u(s'))\right) \d_{x_j} \u(s') ds' \label{def.T_x} \\
	T^{[\u]}(s,\u) & = & \int_{0}^{s} U(s',s) \, \left(\e \F(s',\u(s'))\right) \, \u(s') ds' \label{def.T_u} .
\end{eqnarray}



\subsection{Sketch of the proof}
\label{subsection.sketch}


We have now reduced the initial question of finding a family of initial data $h_{\e}$ generating a family of appropriately growing analytic solutions $u_{\e}$ to the fixed point equation \eqref{fixed.point.equation} for operator $T$. To find smooth solutions of this equation we have first to find a suitable functional space $\EE$ with the following properties:

\begin{itemize}

	\item The space $\EE$ should be a Banach space to make use of the Banach fixed point theorem. Moreover functions of $\EE$ should be smooth functions in variables $(s,x,\theta)$.
	
	\item The space $\EE$ should be a Banach algebra equipped with norm $|||\cdot|||$ satisfying $ |||\u \v||| \leq |||\u|||\,|||\v||| $ as we deal with non linear terms ${\mathbf G}(\u)$.
	
	\item We will need to precisely evaluate the action of derivation operators $\d_{x_j}$ and $\d_{\theta}$ on $\EE$. In an analytical framework, these are \textit{a priori} not bounded operators, and as in \cite{ukai2001boltzmann} and \cite{metivier2005remarks} we should use time integration to get back boundedness in $\EE$ with some loss in the bounds we should quantify.
	
	\item The space $\EE$ should be invariant by the flow $U(s',s,x,\theta)$. In this view, we need estimates in $\EE$ for the matrix flow $U_n(s',s,x)$. 
	
	\item The operator $T$ should be a contraction on $\EE$ for well chosen parameters, and for small $\e$. 
\end{itemize}

\noindent To this end, Section \ref{section.spaces} will present the satisfying functional setting, and Section \ref{section.contraction} will prove the contraction estimate for $T$.

In order to prove the Hadamard instability, the existence of solutions to the fixed point equations \eqref{fixed.point.equation} is not sufficient. The key of the proof is to obtain for the solution $\u$ associated to $\f_{\e}$ the same kind of growth as $\f_{\e}$, as developed in Section \ref{intro.endgame}, and this is the aim of Section \ref{section.existence}. Finally, such a growth for $\u$ leads to the Hadamard instability of the Cauchy problem \eqref{Cauchy.bis}. This completes the proof of Theorems \ref{theorem.2}, \ref{theorem.3} and \ref{theorem.4} in Section \ref{section.Hadamard}.


\section{Majoring series and functional spaces}
\label{section.spaces}


\subsection{Properties of majoring series}
\label{subsection.majoring}


One aim of the paper is to construct a family of analytical solutions of the fixed point equation \eqref{fixed.point.equation}. We deal with functions of several variables: $x$, $(s,x)$ or $(s,x,u)$, and the question of analyticity of these functions with respect to all variables or only to some arises. In that purpose we consider formal series of $\mu$ variables, with complex coefficients that depend eventually on a parameter $y$ in some open domain $\mathcal{O}$ of $\C^{\mu'}$. We denote such formal series 
$$
	\phi(z,y) = \sum_{k\in\N^{\mu}} \phi_{k}(y) z^{k} \quad , \quad \phi_{k}(y) \in \C \, , \quad \forall \, k\in\N^{\mu}\, , \; \forall \, y\in\mathcal{O} 
$$

\noindent where we introduce formal unknowns $z = (z_1,\ldots,z_{\mu})$.  A formal series $\phi(z,y)$ is really a $y$-dependent
sequence $\left(\phi_k(y)\right)_k$ indexed by $k\in\N^{\mu}$ . An important parameter is the dimension $\mu$ of the indices $k$. We define now the relation of majoring series between two formal series $\phi(z,y)$ and $\psi(Z,y)$, with $z$ and $Z$ denoting $\mu$ variables.

\begin{defi}[Majoring series]
	\label{definition.maj}
	For $\phi(z,y)$ and $\psi(Z,y)$ formal series of respectively variable $z$ and variable $Z$, and $y$ a parameter in some open domain $\mathcal{O}$ of $\C^{\mu'}$, with furthermore
	$$ \psi(Z,y) = \sum_{k\in\N^{\mu}} \psi_k(y) Z^{k} \quad \text{with} \quad \psi_k(y) \geq 0 \quad \forall k\in\N^{\mu} \,,\; \forall y\in\mathcal{O} $$
	\noindent we define 
	\be
		\label{def.majoring.series}
		\phi(z,y) \prec_{y} \psi(Z,y) \quad \Longleftrightarrow \quad \Big( \forall k\in\N^{\mu}\, ,\; \forall y\in\mathcal{O} : \quad |\phi_k(y)| \leq \psi_k(y) \Big)
	\ee
\end{defi}

\begin{remark}
	In notation $\prec_{y}$ we emphasize that we consider $y$ as a parameter in the formal series $\phi(z,y)$.
\end{remark}

\noindent In the following we sum up several classical properties of the relation \eqref{def.majoring.series} (see \cite{cartan1995theorie}).

\begin{lemma}
	\label{majoring.properties}
	Let $\phi$ and $\psi$ be as in the previous definition, with $\phi \prec_{y} \psi$. Then
	
	\begin{enumerate}
	
		\item If $\psi$ converges at a point $(Z,y)$ with $Z_i \geq 0$ for all $i=1,\ldots,m$, then $\phi$ converges on all $(z,y)$ such that $|z_i|\leq Z_i$, and 
			\be
				|\phi(z_1,\ldots,z_{\mu},y)| \leq \psi(|z_1|,\ldots,|z_{\mu}|,y) 
			\ee
			
		\item The relation $\prec_{y}$ is compatible with formal derivations: denoting $\d_{i}$ the formal derivation along the $i$-th variable, we have
			\be 
				\phi \prec_{y} \psi \quad \Longrightarrow \quad \d_{i}\phi(z,y) \prec_{y} \d_{i}\psi(Z,y) 
			\ee
			
		\item The relation $\prec_{y}$ is compatible with multiplication:
			\be
				\phi_1 \prec_{y} \psi_1 \text{ and } \phi_2 \prec_{y} \psi_2 \quad \Longrightarrow \quad \phi_1 \phi_2 \prec_{y} \psi_1\psi_2 
			\ee

		\item There is a constant $c_0>0$ such that the series 
			\be
				\label{defi.Phi}
				\Phi(z_1) =  \sum_{k \geq 0} \frac{\dsp c_0}{\dsp k^2+1} z_1^{k}
			\ee
			\noindent satisfies 
			\be
				\label{Phi2}
				\Phi^2 \prec \Phi
			\ee
			\noindent The series $\Phi$ is analytic on $B_{1}(0)$, defined in \eqref{notation.ball}.
	\end{enumerate}
\end{lemma}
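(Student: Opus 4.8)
The plan is to prove the four assertions of Lemma \ref{majoring.properties} in order, the first three being elementary consequences of Definition \ref{definition.maj} and the fourth requiring an explicit computation with the series $\Phi$.

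\textbf{Points 1--3.} For point 1, observe that if $\psi$ converges at $(Z,y)$ with $Z_i \geq 0$, then $\sum_k \psi_k(y) Z^k < \infty$; since $\phi \prec_y \psi$ gives $|\phi_k(y)| \leq \psi_k(y)$ for all $k$, the series $\sum_k \phi_k(y) z^k$ is dominated term by term, in absolute value, by $\sum_k \psi_k(y) |z|^k \leq \sum_k \psi_k(y) Z^k$ whenever $|z_i| \leq Z_i$; this yields both absolute convergence of $\phi$ at $(z,y)$ and the claimed pointwise bound $|\phi(z,y)| \leq \psi(|z_1|,\dots,|z_\mu|,y)$. For point 2, formal differentiation along the $i$-th variable sends the coefficient $\phi_k$ to $(k_i+1)\phi_{k+1_i}$ (with notation \eqref{notation.1i}), and since $|(k_i+1)\phi_{k+1_i}(y)| = (k_i+1)|\phi_{k+1_i}(y)| \leq (k_i+1)\psi_{k+1_i}(y)$, the coefficient-wise inequality defining $\prec_y$ is preserved. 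For point 3, the $k$-th coefficient of $\phi_1\phi_2$ is $\sum_{k'+k''=k} \phi_{1,k'}\phi_{2,k''}$, whose modulus is at most $\sum_{k'+k''=k} |\phi_{1,k'}||\phi_{2,k''}| \leq \sum_{k'+k''=k} \psi_{1,k'}\psi_{2,k''}$, the latter being exactly the $k$-th coefficient of $\psi_1\psi_2$ (note all $\psi_{i,k}$ are nonnegative, so the Cauchy-product coefficients are again nonnegative, as required for the right-hand side of a $\prec_y$ relation). Here the parameter $y$ plays no role beyond being carried along.

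\textbf{Point 4.} This is the only substantive point. The radius of convergence of $\Phi$ is $1$ since $c_0/(k^2+1)$ decays polynomially, so analyticity on $B_1(0)$ is immediate. The task is to choose $c_0$ so that $\Phi^2 \prec \Phi$, i.e. so that the Cauchy-product coefficients of $\Phi^2$ are dominated by those of $\Phi$. The $n$-th coefficient of $\Phi^2$ is $c_0^2 \sum_{k=0}^{n} \frac{1}{(k^2+1)((n-k)^2+1)}$, and we must arrange
\[
	c_0^2 \sum_{k=0}^{n} \frac{1}{(k^2+1)((n-k)^2+1)} \leq \frac{c_0}{n^2+1} \qquad \forall n \geq 0,
\]
equivalently $c_0 \, (n^2+1) \sum_{k=0}^{n} \frac{1}{(k^2+1)((n-k)^2+1)} \leq 1$. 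The key estimate is that there is an absolute constant $C$ with $(n^2+1)\sum_{k=0}^{n} \frac{1}{(k^2+1)((n-k)^2+1)} \leq C$ for all $n$; then taking $c_0 = 1/C$ finishes the proof. To see the key estimate, split the sum at $k = \lfloor n/2 \rfloor$: for $k \leq n/2$ one has $n-k \geq n/2$, so $(n-k)^2+1 \geq (n^2/4)+1 \gtrsim n^2+1$, whence those terms contribute at most $\frac{C'}{n^2+1}\sum_{k\geq 0}\frac{1}{k^2+1} = \frac{C''}{n^2+1}$; by symmetry the terms with $k \geq n/2$ contribute the same; multiplying by $n^2+1$ gives a bound uniform in $n$.

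\textbf{Main obstacle.} Points 1--3 are bookkeeping. The only place requiring genuine care is the convolution estimate in point 4, and even there the substance is the elementary fact that $\ell^1$-type sequences with quadratic decay are closed, up to a constant, under discrete convolution with the weight restored --- precisely the ``split at the midpoint'' argument above. Choosing the explicit shape $c_0/(k^2+1)$ (rather than, say, a geometric series) is what makes $\Phi^2 \prec \Phi$ hold with a single multiplicative constant; this self-majoration is exactly the property exploited later (via $2\Phi\Phi' \prec \Phi'$) in the Cauchy-Kovalevskaya estimates.
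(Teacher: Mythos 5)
Your proposal is correct and follows essentially the same route as the paper: points 1--3 are the same coefficient-wise bookkeeping, and point 4 rests on the same key estimate, namely that $(n^2+1)\sum_{k=0}^{n}\frac{1}{(k^2+1)((n-k)^2+1)}$ is bounded uniformly in $n$, so that a small enough $c_0$ gives $\Phi^2\prec\Phi$. The only (inessential) difference is how that uniform bound is obtained: you split the convolution sum at the midpoint, while the paper uses the inequality $k^2\leq 2\big(p^2+(k-p)^2\big)$; both reduce to $\sum_{p}\frac{1}{p^2+1}<\infty$.
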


\begin{proof}
We give here a short proof of this Lemma.
	\begin{enumerate}
	
		\item Assume that $\psi(Z,y)$ is converging at a point $(Z,y)$, with all $Z_i \geq 0$. By definition of the majoring series, we have for all $k\in\N^{\mu}$ the inequality $ |\phi_{k}(y)| \leq \psi_{k}(y) $. Since the series $\sum_{k} \psi_{k}(y)Z^{k}$ is convergent, then for all $z\in\C^{\mu}$ such that $|z_i| \leq Z_i$ the series $\sum_{k} \phi_{k}(y) z^{k}$ converges and there holds by \eqref{notation.product} and Definition \ref{definition.maj}
		\begin{eqnarray*}
			\left| \sum_{k\in\N^{\mu}} \phi_{k}(y) z^{k} \right| & \leq & \sum_{k\in\N^{\mu}} |\phi_{k}(y)| \prod |z_j|^{k_j} \\
			  & \leq & \sum_{k\in\N^{\mu}} \psi_{k}(y) \prod |z_j|^{k_j} \\
			  & = & \psi(|z_1|,\ldots,|z_{\mu}|,y) 
		\end{eqnarray*}
		
		\noindent Hence the importance of using two different notations for the $\mu$ variables, $z$ and $Z$.
		
		\item By definition of formal derivation $\d_{i}$, there holds
		$$ \d_{i}\phi(z,y) = \sum_{k\in\N^{\mu}} (k_i+1) \phi_{k+1_{i}}(y) z^{k} $$
		
		\noindent where $1_{i}$ is defined by \eqref{notation.1i} and for all $k\in\N^{\mu}$ there holds
		$$ |(k_i+1) \phi_{k+1_{i}}(y)| \leq (k_i+1) \psi_{k+1_{i}}(y) $$
		
		\noindent by Definition \ref{definition.maj}, which is exactly the $k$-th coefficient of the formal series $\d_{i}\psi(Z,y)$.
		
		\item Let $\phi^1$, $\phi^2$, $\psi^1$ and $\psi^2$ be such that $ \phi^1 \prec_{y} \psi^1$ and $ \phi^2 \prec_{y} \psi^2 $. By definition of the multiplication of two formal series, the coefficients of the formal series $\phi^1\phi^2(z,y)$ in $z$ are
		$$ (\phi^1\phi^2)_{k}(y) = \sum_{p=0}^{k} \phi^1_{p}(y)\phi^2_{k-p}(y) $$
		
		\noindent and then for all $y\in\mathcal{O}$ and $k\in\N^{\mu}$ there holds
		\begin{eqnarray*}
			\left|(\phi^1\phi^2)_{k}(y)\right| & \leq & \sum_{p=0}^{k} \left|\phi^1_{p}(y)\right| \,\left|\phi^2_{k-p}(y)\right| \\
				& \leq & \sum_{p=0}^{k} \psi^1_{p}(y) \,\psi^2_{k-p}(y)
		\end{eqnarray*}
		
		\noindent because $\phi^{1} \prec_{y} \psi^{1}$ and $\phi^2 \prec_y \psi^2$. As the right-hand side of the previous inequality is just $(\psi^1\psi^2)_k(y)$, this ends the proof.
		
		\item For $\mu=1$ and $\mu'=0$, we consider the series
		$$ \Phi(z) = \sum_{k\in\N} \frac{\dsp c_0}{\dsp k^2+1} z^{k} $$
		
		\noindent We compute
		$$ \Phi^2(z) = \sum_{k\in\N} \sum_{p=0}^{k} \frac{\dsp c_0}{\dsp p^2+1}\frac{\dsp c_0}{\dsp (k-p)^2+1} z^{k} . $$
		
		\noindent To prove the existence of some $c_0 >0$ such that \eqref{Phi2} holds, it suffices to prove that
		$$ \sum_{p=0}^{k} \frac{\dsp k^2+1}{\dsp (p^2+1)((k-p)^2+1) } $$
		
		\noindent is bounded for all $k\in\N$. Thanks to $k^2 \leq 2(p^2 + (k-p)^2) $ there holds
		$$ \sum_{p=0}^{k} \frac{\dsp k^2+1}{\dsp (p^2+1)((k-p)^2+1) } \leq 4 \sum_{p=0}^{k} \frac{\dsp 1}{\dsp p^2+1} \leq  4\sum_{p\in\N} \frac{\dsp 1}{\dsp p^2+1} $$ 
		
		\noindent which suffices to end the proof.

	\end{enumerate}

\end{proof}

After these abstract considerations we come back to series in the spatial variable $x$, where $t$ a parameter. The principle behind the relation of majoring series is to replace unknown analytical functions by a fixed, well-known series. In this view we consider the series in $d$ variables $(X_1, \ldots , X_d)$, with $t\in[0,\rho^{-1})$ a parameter and $R$ and $\rho$ some positive constants
\be
	\label{def.Phi}
	\Phi(RX_1+\cdots+RX_d+\rho t) = \sum_{k\in\N^{d}} \left(R^{|k|}\sum_{p\in\N} \frac{c_0}{(|k|+p)^2+1} \binom{|k|+p}{k,p} \rho^p t^p \right) X^{k} 
\ee

\noindent using the notations \eqref{notation.product} for $X^{k}$ and \eqref{notation.binom} for $\binom{|k|+p}{k,p}$. We denote
\be
	\label{def.Phi_k.t}
	\Phi_{k}(t) = R^{|k|}\sum_{p\in\N} \frac{c_0}{(|k|+p)^2+1} \binom{|k|+p}{k,p} \rho^p t^p \; , \quad \forall k\in\N^{d}
\ee

\noindent where it is implicit that $\Phi_{k}(t)$ depend also on $R$ and $\rho$. Note that the series in the right hand side of \eqref{def.Phi_k.t} is convergent for $|t| <\rho^{-1}$. Since the series $\Phi(z)$ converges in $B_1(z=0)$, the series $\Phi(RX_1+\cdots+RX_d+\rho t)$ is convergent as a series in $X$ and $t$ variables on $\Omega_{R,\rho}(0)$ defined by \eqref{defi.Omega}.

From now on, we will note for convenience and with an abuse of notation 
\be
	\Phi(RX+\rho t) = \Phi(RX_1+\cdots+RX_d+\rho t)
\ee

\noindent as the reference series in the $x$ variable, for some positive constants $R$ and $\rho$. In the following Lemma we sum up properties for formal series $\phi$ in $d$ variables with one parameter $t$ that satisfy 
$$\phi(x,t) \prec_{t} C \Phi(R X + \rho t)$$

\noindent for some $C>0$. This is equivalent, thanks to \eqref{def.majoring.series}, \eqref{def.Phi} and \eqref{def.Phi_k.t} to
\be
	\label{def.maj}
	|\phi_{k}(t)| \leq C \Phi_{k}(t)  \quad , \quad \forall k \in\N^{d} \, \text{ and } \; 0\leq t < \rho^{-1} .
\ee

\begin{lemma}
	\label{lemma.phi.prec.Phi}
	For $\phi(x,t)$ a formal series in $x$ with $ \phi(x,t) \prec_{t} C\Phi(R X + \rho t) $ there holds
		
	\begin{enumerate}
	
	\item $\phi(x,t)$ is analytic as a series in $x$ in the domain $\Omega_{R,\rho,t}(0)$ for all $0\leq t < \rho^{-1}$. 
	
	\item For all $0\leq t < \rho^{-1}$, there holds
		\be
			\label{dx.Phi}
			\d_{x_j} \phi(x,t) \prec_{t} CR \Phi'(R X + \rho t)
		\ee
	\noindent with $\Phi'$ the derivative of $\Phi$.
	
	\item For any $R \geq R_0$ and $\rho \geq \rho_0$, there holds
	\be
		\label{R_0.R}
		\Phi(R_0X+\rho_0 t) \prec_{R_0,\rho_0,R,\rho,t} \Phi(RX+\rho t) .
	\ee
	
	\item For any $R>0$, $\rho>0$ and $0 \leq t' < t < \rho^{-1}$, there holds
	\be
	\label{t'.t}
		\Phi(RX + \rho t') \prec_{t',t} \Phi(RX + \rho t) .
	\ee
	
	\end{enumerate}
\end{lemma}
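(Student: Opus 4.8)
The plan is to derive all four items from the explicit coefficient formula \eqref{def.Phi_k.t} together with the abstract properties collected in Lemma \ref{majoring.properties}. By \eqref{def.maj}, the hypothesis $\phi(x,t)\prec_{t}C\Phi(RX+\rho t)$ is exactly the family of bounds $|\phi_{k}(t)|\leq C\Phi_{k}(t)$, valid for every $k\in\N^{d}$ and every $t\in[0,\rho^{-1})$; since $\Phi_{k}(t)\geq 0$ these are genuine numerical inequalities.

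For item 1, I would fix $t\in[0,\rho^{-1})$ and an arbitrary point $\tilde x\in\Omega_{R,\rho,t}(0)$. Since $\Omega_{R,\rho,t}(0)$ is open, by \eqref{defi.Omega.t} there is $\eta>0$ such that the point $Z$ with $Z_{j}=|\tilde x_{j}|+\eta$ still satisfies $R(Z_{1}+\cdots+Z_{d})+\rho t<1$. Then the dominating series $C\Phi(RX+\rho t)$ --- which, $\Phi$ being analytic on $B_{1}(0)$ (point 4 of Lemma \ref{majoring.properties}), converges at every non-negative argument strictly below $1$ --- converges at $Z$. Point 1 of Lemma \ref{majoring.properties} thus gives absolute convergence of $\sum_{k}\phi_{k}(t)x^{k}$ on the polydisc $\{|x_{j}|\leq Z_{j}\}$, a neighbourhood of $\tilde x$, with $|\phi(x,t)|\leq C\Phi(R|x_{1}|+\cdots+R|x_{d}|+\rho t)$. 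A power series converging absolutely on a polydisc is analytic on its interior, so $\phi(\cdot,t)$ is analytic near $\tilde x$; as $\tilde x$ was arbitrary, $\phi(\cdot,t)$ is analytic on all of $\Omega_{R,\rho,t}(0)$.

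For item 2, I would invoke the compatibility of $\prec_{t}$ with formal differentiation (point 2 of Lemma \ref{majoring.properties}) to get $\d_{x_{j}}\phi(x,t)\prec_{t}C\,\d_{x_{j}}\bigl(\Phi(RX_{1}+\cdots+RX_{d}+\rho t)\bigr)$, and then evaluate the right-hand side by differentiating \eqref{def.Phi} term by term --- that is, by the formal chain rule for substituting the linear form $RX_{1}+\cdots+RX_{d}+\rho t$ into the one-variable series $\Phi$ --- which yields $R\,\Phi'(RX_{1}+\cdots+RX_{d}+\rho t)$, which is \eqref{dx.Phi}; one notes in passing that $\Phi'$ has positive coefficients, so $CR\Phi'(RX+\rho t)$ is a legitimate majorant. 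Items 3 and 4 are then direct inspections of \eqref{def.Phi_k.t}: each coefficient $\Phi_{k}(t)=R^{|k|}\sum_{p}\frac{c_{0}}{(|k|+p)^{2}+1}\binom{|k|+p}{k,p}\rho^{p}t^{p}$ is a sum of non-negative terms that is non-decreasing in $R$, in $\rho$, and in $t$ separately; hence $R\geq R_{0}$ and $\rho\geq\rho_{0}$ force $\Phi_{k}(t;R_{0},\rho_{0})\leq\Phi_{k}(t;R,\rho)$ for all $k$ and all admissible $t$, which is \eqref{R_0.R}, while $0\leq t'<t$ forces $\Phi_{k}(t';R,\rho)\leq\Phi_{k}(t;R,\rho)$, which is \eqref{t'.t}; one only checks that the defining series converge on the relevant $t$-intervals, which holds because $[0,\rho^{-1})\subseteq[0,\rho_{0}^{-1})$.

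I do not expect a genuine obstacle here: items 3--4 are bookkeeping with \eqref{def.Phi_k.t}, and item 2 combines a property already established in Lemma \ref{majoring.properties} with an elementary chain rule. The only step deserving a line of care is item 1, where one must pass from ``the dominating power series converges on the conical slice'' to ``$\phi(\cdot,t)$ is a genuine analytic function there''; this is handled, as above, by applying point 1 of Lemma \ref{majoring.properties} on a slightly enlarged polydisc around each point so as to sit strictly inside the open domain of convergence.
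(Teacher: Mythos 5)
Your proposal is correct and follows essentially the same route as the paper: items 1 and 2 via points 1 and 2 of Lemma \ref{majoring.properties} (with the chain-rule computation $\d_{X_j}\Phi(RX+\rho t)=R\Phi'(RX+\rho t)$), and items 3 and 4 by direct monotonicity inspection of the coefficients \eqref{def.Phi_k.t} in $R$, $\rho$ and $t$. Your polydisc argument for item 1 merely spells out more carefully what the paper states tersely (that the majorant's domain of convergence in $X$ is exactly $\Omega_{R,\rho,t}(0)$), so there is no substantive difference.
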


\begin{proof}

	\begin{enumerate}
	
		\item By the first property of Lemma \ref{majoring.properties}, the formal series $\phi(x,t)$ is analytic in $x$ on the domain of convergence of the series $\Phi(RX + \rho t)$ thought as a series in $X$ variable. As it is just $\Omega_{R,\rho,t}(0)$, defined by \eqref{defi.Omega.t}, the function $\phi(x,t)$ is analytic on $\Omega_{R,\rho,t}(0)$ as a series in the $x$ variable for all $0\leq t < \rho^{-1}$.
		
		\item By the second property of Lemma \ref{majoring.properties} there holds $ \d_{x_j}\phi(x,t) \prec_{t} C\d_{X_j} \left(\Phi(RX+\rho t)\right) $ and as
		$$ \d_{X_j} \left(\Phi(RX+\rho t)\right) = \d_{X_j} \left(\Phi(RX_1 + \cdots + RX_{d} +\rho t)\right) = R\Phi'(RX+\rho t) $$
		
		\noindent for all $0\leq t < \rho^{-1}$, we finally get \eqref{dx.Phi}.
		
		\item Thanks to notation \eqref{def.Phi_k.t} we have $\Phi(RX + \rho t) = \sum_{k\in\N^{d}} \Phi_{k}(t) X^{k} $ for all $0 \leq t < \rho^{-1}$, where we recall it is implicit that the coefficients $\Phi_{k}(t) = \Phi_{k}(t,R,\rho)$ depend also on $R$ and $\rho$. In the definition \eqref{def.Phi_k.t} we easily see that
		$$ 
			\Phi_{k}(t,R_0,\rho_0) \leq \Phi_{k}(t,R,\rho) \quad , \quad \forall R\geq R_0,\; \forall \rho \geq \rho_0 ,\; \forall 0 \leq t < \rho^{-1} 
		$$
		
		\noindent which is exactly \eqref{R_0.R}.
		
		\item In the same way we see that, $ R$ and $\rho$ being fixed, the coefficients $\Phi_{k}(t)$ are increasing functions of $t$:
		$$ \Phi_{k}(t') \leq \Phi_{k}(t) \qquad \forall k\in\N^{d} , \; \forall 0 \leq t' < t < \rho^{-1} $$
		\noindent which is exactly \eqref{t'.t}.
		
	\end{enumerate}
	
\end{proof}

The first property of the previous Lemma indicates that series controlled by $\Phi$ are analytic. Conversely the following Lemma proves that analytic functions are controlled by appropriate series:

\begin{lemma}
	\label{lemma.H}
	Let $H(t,x,u)$ an analytic function in the neighborhood of $(0,0,0)\in\R\times\R^{d}\times\R^{N}$. Then there are some positive constants $C_{H}$, $R_H$, $\rho_H$ and $a_H$ such that
	\be
		\label{prec.H}
		H(t,x,u) \prec C_H \Phi(R_H X + \rho_H t) \prod_{j=1}^{N}\frac{\dsp 1}{1 - a_H u_j}
	\ee
\end{lemma}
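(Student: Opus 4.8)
The plan is to reduce to a single-variable majorization after diagonalizing the roles of the $x$ and $t$ variables on one side and the $u$ variables on the other. First I would use analyticity of $H$ at the origin: there exist $r>0$ and a bound $|H(t,x,u)|\leq M$ on the polydisc $\{|t|\leq r,\ |x_j|\leq r,\ |u_j|\leq r\}$ in the complexified variables, and hence by the multivariate Cauchy inequalities the Taylor coefficients of $H$ satisfy
\be
	\label{cauchy.ineq.H}
	\left| \frac{1}{\alpha!\,\beta!}\,\d_t^{\alpha_0}\d_x^{\alpha'}\d_u^{\beta} H(0,0,0) \right| \leq M\, r^{-(\alpha_0 + |\alpha'| + |\beta|)}
\ee
for all multi-indices $\alpha = (\alpha_0,\alpha')\in\N^{1+d}$ and $\beta\in\N^N$. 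Writing $H(t,x,u) = \sum_{\alpha,\beta} h_{\alpha,\beta}\, t^{\alpha_0} x^{\alpha'} u^{\beta}$, this says $H \prec M \sum_{\alpha,\beta} r^{-(|\alpha|+|\beta|)} t^{\alpha_0} x^{\alpha'} u^{\beta}$, and the right-hand side factorizes as the product of $M\,(1-t/r)^{-1}\prod_j (1-x_j/r)^{-1}$ in the $(t,x)$ variables and $\prod_j (1-u_j/r)^{-1}$ in the $u$ variables (here I use that $\prec$ is compatible with multiplication, Lemma \ref{majoring.properties}, point 3).

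The second step is to absorb the rational factor $(1-t/r)^{-1}\prod_j(1-x_j/r)^{-1}$ into the reference series $\Phi$. Take $R_H$ and $\rho_H$ large, specifically $R_H \geq 2/r$ and $\rho_H \geq 2/r$ (or any choice making $\frac{1}{r}(|x|_1 + t) \leq \frac12 (R_H|x|_1 + \rho_H t)$). On the region where $R_H|x|_1 + \rho_H t < 1$ one then has pointwise domination, but since we want a \emph{formal} majorization $\prec$ we argue at the level of coefficients: the Taylor coefficient of $t^{\alpha_0}x^{\alpha'}$ in $(1-t/r)^{-1}\prod_j(1-x_j/r)^{-1}$ is $r^{-(\alpha_0+|\alpha'|)}$, whereas the corresponding coefficient of $\Phi(R_H X + \rho_H t)$ is, from \eqref{def.Phi_k.t}, $\Phi_{\alpha'}(t)$-type and equals $R_H^{|\alpha'|}\rho_H^{\alpha_0}\frac{c_0}{(|\alpha'|+\alpha_0)^2+1}\binom{|\alpha'|+\alpha_0}{\alpha',\alpha_0}$. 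The multinomial coefficient is at least $1$, and $\frac{c_0}{(|\alpha'|+\alpha_0)^2+1}$ decays only polynomially while $R_H^{|\alpha'|}\rho_H^{\alpha_0} \geq (2/r)^{|\alpha'|+\alpha_0}$ grows geometrically; hence there is a constant $C_0$ (depending on $c_0$ and on how much room we left in the choice of $R_H,\rho_H$ versus $2/r$) with $r^{-(\alpha_0+|\alpha'|)} \leq C_0\, [\,\Phi(R_H X + \rho_H t)\,]_{\alpha_0,\alpha'}$ uniformly in $\alpha$. Combining with the $u$-factor and setting $a_H = 1/r$, $C_H = M C_0$ gives exactly \eqref{prec.H}.

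The only delicate point is the coefficient comparison in the second step: one must check that the geometric gain $R_H^{|\alpha'|}\rho_H^{\alpha_0}$ in $\Phi$'s coefficients genuinely dominates the polynomial loss $\frac{c_0}{(|\alpha'|+\alpha_0)^2+1}$ \emph{uniformly}, i.e. that $\sup_{\alpha} \frac{r^{-(\alpha_0+|\alpha'|)}}{R_H^{|\alpha'|}\rho_H^{\alpha_0}\, c_0/((|\alpha'|+\alpha_0)^2+1)} < \infty$; this is immediate once $R_H r > 1$ and $\rho_H r > 1$, since $(n^2+1)q^{-n}$ is bounded for any $q>1$. Everything else is bookkeeping with Cauchy's inequalities and the multiplicativity of $\prec$. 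I would remark that $R_H$, $\rho_H$, $a_H$ can be taken as large (resp.\ $a_H$ related to the inverse radius of convergence) as needed, a freedom used later when these constants must beat the parameters $R,\rho$ appearing in the fixed-point argument.
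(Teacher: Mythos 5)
Your proposal is correct and follows essentially the same route as the paper: multivariate Cauchy inequalities for the Taylor coefficients of $H$, then a coefficient-by-coefficient comparison with $\Phi(R_H X+\rho_H t)\prod_j(1-a_H u_j)^{-1}$, using $\binom{k_1+|k_2|}{k_1,k_2}\geq 1$ and the boundedness of $(n^2+1)q^{-n}$ for $q>1$ once $R_H$, $\rho_H$, $a_H$ exceed the inverse radii. The intermediate factorization through $(1-t/r)^{-1}\prod_j(1-x_j/r)^{-1}\prod_j(1-u_j/r)^{-1}$ is only a presentational variant of the paper's direct comparison.
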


\begin{proof}

	Formally we write
	\begin{eqnarray*}
		H(t,x,u) & = & \sum_{k_1,k_2,k_3} H_{k_1,k_2,k_3} \,t^{k_1} \,x^{k_2} \, u^{k_3} 
	\end{eqnarray*}
	
	\noindent with $k_1\in\N$, $k_2\in\N^{d}$ and $k_3\in\N^{N}$. By the Cauchy relations for $H$, we know there are some positive constants $C$, $r_1$, $r_2$ and $r_3$ depending only on $H$ such that
	$$
		|H_{k_1,k_2,k_3}| \leq  C\frac{\dsp 1}{\dsp r_1^{k_1}r_2^{|k_2|}r_3^{|k_3|}} \quad , \quad \forall \,( k_1, k_2,k_3)\in\N \times\N^{d} \times \N^{N}.
	$$
		
\noindent	We compare $ |H_{k_1,k_2,k_3}| $ to the coefficients of the series $\Phi(R_HX + \rho_H t)\prod \left(1 - a_H u_j\right)^{-1}$:
	\begin{align*}
		& \Phi(R_HX + \rho_H t) \prod_{j=1}^{N}\frac{\dsp 1}{1 - a_Hu_j} \\
		& = \sum_{p\in\N} \frac{c_0}{p^2+1} \left(R_HX+\rho_Ht\right)^{p} \, \sum_{q\in\N^{N}} a_H^{|q|}u^{q} \\
		& = \sum_{p} \sum_{k_1 + |k_2| = p} \frac{c_0}{p^2+1} \binom{p}{k_1,k_2} (\rho_H t)^{k_1} (R_HX)^{k_2} \, \sum_{q} a_H^{|q|}u^{q} \\
		& = \sum_{k_1,k_2,k_3} \frac{c_0}{(k_1 + |k_2|)^2+1} \binom{k_1 + |k_2|}{k_1,k_2} \rho_H^{k_1} R_H^{|k_2|} a_H^{|k_3|} \, t^{k_1} X^{k_2}  u^{k_3} 
	\end{align*}
	
	\noindent Then we have for all $( k_1, k_2,k_3)\in\N \times\N^{d} \times \N^{N}$ we have
	\begin{eqnarray*}
		|H_{k_1,k_2,k_3}| & \leq & C\frac{\dsp 1}{\dsp r_1^{k_1}r_2^{|k_2|}r_3^{|k_3|}} \\
		  & \leq & \frac{C}{c_0} \frac{\dsp (k_1+|k_2|)^2 + 1}{\dsp (\rho_Hr_1)^{k_1} (R_Hr_2)^{|k_2|} (a_Hr_3)^{|k_3|}} \, \frac{c_0}{(k_1+|k_2|)^2+1} \binom{k_1+|k_2|}{k_1,k_2}  \rho_H^{k_1} R_H^{|k_2|} a_H^{|k_3|}
	\end{eqnarray*}
	
	\noindent thanks to $\binom{k_1+|k_2|}{k_1,k_2} \geq 1$ for all $k_1,\,k_2$. By choosing $R_H$, $\rho_H$ and $a_H$ such that $\rho_Hr_1$, $R_Hr_2$ and $a_Hr_3$ are larger than $1$, the term
	$$
		\frac{\dsp (k_1+|k_2|)^2 + 1}{\dsp (\rho_Hr_1)^{k_1} (R_Hr_2)^{|k_2|} (a_Hr_3)^{|k_3|}}
	$$
	
	\noindent is bounded for all $( k_1, k_2,k_3)\in\N \times\N^{d} \times \N^{N}$. Then there is a constant $C_H>0$ depending only on $H$, $R_H$, $\rho_H$ and $a_H$ such that for all $( k_1, k_2,k_3)\in\N \times\N^{d} \times \N^{N}$ there holds
	$$
		|H_{k_1,k_2,k_3}| \leq C_H \frac{c_0}{(k_1 + |k_2|)^2+1} \binom{k_1 + |k_2|}{k_1,k_2}  \rho_H^{k_1} R_H^{|k_2|} a_H^{|k_3|}
	$$
	
	\noindent which implies
	$$
		H(x,t,u) \prec C_H \Phi(R_H X + \rho_H t) \prod_{j=1}^{N}\frac{\dsp 1}{1 - a_H u_j}.
	$$
	
\end{proof}

\begin{lemma}
	\label{lemma.c_1}
	There is $c_1>0$ such that
	\be
		\label{definition.c_1}
		\sum_{p\in\mathbb{Z}} \frac{\displaystyle c_1}{\displaystyle p^2+1} \frac{\displaystyle c_1}{\displaystyle (n-p)^2+1} \leq \frac{\displaystyle c_1}{\displaystyle n^2+1}
	\ee
\end{lemma}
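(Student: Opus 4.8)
The plan is to mimic the proof of point~4 in Lemma~\ref{majoring.properties}, adapting it from a finite sum over $\{0,\dots,k\}$ to a bilateral sum over $\Z$ (this two-sided version is the one needed for Fourier series in the periodic variable $\theta$). After dividing the desired inequality \eqref{definition.c_1} by $c_1 > 0$, it is equivalent to $c_1 (n^2+1)\sum_{p\in\Z} \big((p^2+1)((n-p)^2+1)\big)^{-1} \leq 1$ for all $n\in\Z$. Hence it suffices to prove that the quantity $S := \sup_{n\in\Z} \sum_{p\in\Z} \frac{n^2+1}{(p^2+1)((n-p)^2+1)}$ is finite; one then simply takes $c_1 = 1/S$, or any smaller positive number.

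To bound $S$, I would use the elementary inequality $n^2 = (p + (n-p))^2 \leq 2p^2 + 2(n-p)^2$, which gives $n^2+1 \leq 2(p^2+1) + 2((n-p)^2+1)$, and therefore the pointwise bound $\frac{n^2+1}{(p^2+1)((n-p)^2+1)} \leq \frac{2}{(n-p)^2+1} + \frac{2}{p^2+1}$. Summing over $p\in\Z$ and using translation invariance of $\Z$ (re-indexing $p\mapsto n-p$ in the first term) yields $\sum_{p\in\Z} \frac{n^2+1}{(p^2+1)((n-p)^2+1)} \leq 4\sum_{p\in\Z}\frac{1}{p^2+1}$, uniformly in $n$. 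Since $\sum_{p\in\Z}(p^2+1)^{-1}$ converges (comparison with $\sum p^{-2}$), the supremum $S$ is finite, and the choice $c_1 = \big(4\sum_{p\in\Z}(p^2+1)^{-1}\big)^{-1}$ closes the argument.

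I do not anticipate a genuine obstacle here: the estimate is a routine variant of Lemma~\ref{majoring.properties}(4). The only point needing a little care is that the sum runs over all of $\Z$, so the convergence must be secured by translation invariance plus summability of $\sum_{p\in\Z}(p^2+1)^{-1}$, in place of the binomial bookkeeping used in the one-sided power-series case.
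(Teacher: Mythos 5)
Your proof is correct and follows essentially the same route as the paper: the paper likewise reduces to bounding $\sum_{p\in\Z}\frac{n^2+1}{(p^2+1)((n-p)^2+1)}$ uniformly in $n$ via $n^2+1\leq 2\bigl((p^2+1)+((n-p)^2+1)\bigr)$, obtaining the bound $4\sum_{p\in\Z}(p^2+1)^{-1}$ and then choosing $c_1$ accordingly. Your explicit choice $c_1=\bigl(4\sum_{p\in\Z}(p^2+1)^{-1}\bigr)^{-1}$ just makes the paper's implicit final step concrete.
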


\begin{proof}
	In the same way of the proof of the third point of Lemma \ref{majoring.properties}, there holds
	$$
		\sum_{p\in\mathbb{Z}} \frac{\displaystyle n^2+1}{\displaystyle (p^2+1)((n-p)^2+1)} \leq \sum_{p\in\mathbb{Z}} \frac{\displaystyle 2(p^2+1 + (n-p)^2 +1)}{\displaystyle (p^2+1)((n-p)^2+1)} \leq 4 \sum_{p\in\Z} \frac{\dsp 1}{\dsp p^2+1}
	$$
	
	\noindent which suffices to end the proof.
\end{proof}


\subsection{Definitions of functional spaces}


\subsubsection{Fixed time spaces $\EE_{s}$}

We consider trigonometric series in one variable $\theta$ with coefficients in the space of formal series in $d$ variables $x$ in the sense of Section \ref{subsection.majoring}, and we denote $F_{d+1}$ the space of all such trigonometric series:
$$
	F_{d+1} = \left\{ \v(x,\theta) = \sum_{n\in\mathbb{Z}} \v_n(x) e^{in\theta} \;\Big|\; \v_n(x) = \sum_{k\in\N^{d}} \v_{n,k} x^{k} \right\} .
$$ 

\begin{defi}[Fixed time spaces $\EE_{s}$]
	\label{definition.EE_s}
	Given $s\in[0,(\e \rho)^{-1})$, $R>0$, $\rho >0$, $M'>0$ and $\beta \in (0,1)$, we denote $\EE_{s} = \EE_{s}(R,\rho,M',\beta)$ the space of trigonometric series $\v\in F_{d+1}$ such that for some constant $C>0$ there holds
	\be
		\label{defi.EEs}
		\v_n(x) \prec C\frac{\displaystyle c_1}{\displaystyle n^2+1}\exp\Big({}-\big(M' - \int_{0}^{s} \g(\tau)d\tau \big)\left< n \right>\Big) \Phi\left(R X+\e\rho s\right) \quad ,\quad \forall n \in\Z .
	\ee
	\noindent where we denote
	\be
		\label{def.g}
		\g(\tau) = \g(\tau \,;R,\omega) := \g^{\sharp}(\tau \, ; R,\omega) + \beta .
	\ee
	
	\noindent We define a norm on $\EE_{s}$ with
	\be
		\label{defi.norm.s}
		\|\v\|_{s} = \inf \left\{ C>0 \,|\, \quad \eqref{defi.EEs} \text{  is satisfied  } \right\} .
	\ee
\end{defi}

Note that in definition \eqref{def.g} of $\g$, the function $\g^{\sharp}$ corresponds to either one defined in Lemma \ref{lemma.growth.propa}. In previous Definition \ref{definition.EE_s}, it is implicit that space $\EE_{s}$ depends on a positive function $\g^{\sharp}$.

Thanks to Lemma \ref{lemma.phi.prec.Phi}, for $s\in[0,(\e\rho)^{-1})$, all $\v\in\EE_{s}$ are holomorphic in the $x$ variable in the domain $\Omega_{R,\e\rho,s}$ defined by \eqref{defi.Omega.t}. We introduce also the growth time $\und{s}_1$ defined implicitely as
\be
	\label{def.und.s.1}
	M' = \int_{0}^{\und{s}_1} \g(\tau) d\tau .
\ee

\noindent For $0 \leq s < \und{s}_1$ we have $M' - \int_{0}^{s} \g(\tau)d\tau > 0$ and then analyticity of $\v$ in the $\theta$ variable. We will also see in Lemma \ref{lemma.algebra} that if $0\leq s < \und{s}_1$, the space $(\EE_{s},||\cdot||_{s})$ is an algebra. After these considerations it is convenient to define the final time as
\be
	\label{finaltime}
	\und{s} = \min\left\{\und{s}_1, \left(\e\rho\right)^{-1}\right\} .
\ee

To simplify the notations, in all the following we will omit the parameters $R$, $\rho$, $M'$ and $\beta$ in $\EE_{s}(R,\rho,M',\beta)$. All properties of spaces $\EE_{s}$ do not depend on particular values of those parameters.

\subsubsection{Spaces $\EE$}

We consider now trigonometric series
$$
	\u(s,x,\theta) = \sum_{n\in\Z} \u_n(s,x) e^{in\theta}
$$

\noindent with coefficients $\u_n(s,x)$ being formal series in $x$ whose coefficients depend smoothly on $s\in[0,\und{s})$. We denote $F_{d+2}$ the space of all such trigonometric series:
$$
	F_{d+2} = \left\{ \u(s,x,\theta) = \sum_{n\in\Z} \u_n(s,x) e^{in\theta} \;\Big|\; \u_n(s,x) = \sum_{k\in\N^{d}} \u_{n,k}(s)x^{k} \; \text{with } \u_{n,k}(s) \;C^{\infty} \text{ in } s \right\} .
$$

\begin{defi}[Spaces $\EE$]
	We introduce
	\be
		\label{defi.space}
		\EE = \left\{\u\in F_{d+2} \,|\, \forall \, 0\leq s <\und{s} \, , \quad \u(s)\in\EE_{s} \right\}
	\ee
	\noindent and the corresponding norm
	\be
		\label{defi.norm}
		||| \u ||| = \sup_{0\leq s < \und{s}} \|\u(s)\|_s .
	\ee
\end{defi}

Recalling the definition of majoring series \eqref{def.majoring.series} and the definition of $\EE_{s}$ \eqref{defi.EEs}, for all $\u\in\EE$ there holds
\be
	\label{esti.u}
	\u_n(s,x) \prec_{s} |||\u||| \frac{\displaystyle c_1}{\displaystyle n^2+1}\exp\Big({}-\big(M' - \int_{0}^{s} \g(\tau) d\tau \big)\left< n \right>\Big) \Phi\left(R X+\e\rho s\right) \; ,  \quad \forall n \in\Z ,\; \forall s\in[0,\und{s}) .
\ee


For $\u$ valued in $\mathbb{C}^{N}$, $\u\in\EE$ means simply that each component of $\u$ is in $\EE$, and $||| \u |||$ is then the maximum of the norms of the components. 

We denote the ball of $\EE$ of radius $a$, centered in $\u\in\EE$ by
\be
	\label{defi.ball}
	B_{\EE}(\u,a) = \left\{ \v\in\EE \;|\; |||\v - \u||| < a \right\} .
\ee


\subsection{Some properties of spaces $\EE$}

\subsubsection{The spaces $\EE_{s}$ are Banach spaces}

\begin{prop}
	For all $s\in[0,\und{s})$, the space $\EE_{s}$ equipped with the norm $||\cdot||_{s}$ is a Banach space.
\end{prop}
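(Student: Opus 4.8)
The plan is to observe that, once the majoring relation in \eqref{defi.EEs} is unwound, $\EE_s$ is nothing more than a weighted $\ell^\infty$-space on the index set $\Z\times\N^d$, for which the Banach property is classical; I then spell out the short argument. Fix $s\in[0,\und{s})$. First, $\EE_s$ is a linear subspace of $F_{d+1}$, since a sum (resp.\ scalar multiple) of series satisfying \eqref{defi.EEs} again satisfies it after enlarging the constant $C$. Next, by Definition \ref{definition.maj}, a constant $C>0$ satisfies \eqref{defi.EEs} if and only if
\[
	|\v_{n,k}| \leq C\, w_{n,k}(s), \qquad w_{n,k}(s) := \frac{c_1}{n^2+1}\,\exp\Big({}-\big(M' - \int_{0}^{s} \g(\tau)\,d\tau \big)\left< n \right>\Big)\,\Phi_k(s) ,
\]
for all $n\in\Z$ and $k\in\N^d$, where $\Phi_k(s)\geq 0$ denotes the coefficient of $X^k$ in $\Phi(RX+\e\rho s)$. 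The $p=0$ term in \eqref{def.Phi_k.t} gives $\Phi_k(s)\geq R^{|k|}c_0/(|k|^2+1)>0$, and $c_1>0$ by Lemma \ref{lemma.c_1}, so every weight $w_{n,k}(s)$ is \emph{strictly} positive. Hence \eqref{defi.norm.s} amounts to $\|\v\|_s = \sup_{n,k} |\v_{n,k}|/w_{n,k}(s)$, which is finite exactly when $\v\in\EE_s$, the infimum in \eqref{defi.norm.s} then being attained.

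From this formula the norm axioms are immediate: positive homogeneity and the triangle inequality pass through the supremum, and $\|\v\|_s=0$ forces $\v_{n,k}=0$ for all $n,k$, i.e.\ $\v=0$. For completeness I argue in the standard way for a weighted $\ell^\infty$-space. Given a Cauchy sequence $(\v^{(j)})_j$ in $\EE_s$, the bound $|\v^{(j)}_{n,k}-\v^{(l)}_{n,k}| \leq \|\v^{(j)}-\v^{(l)}\|_s\, w_{n,k}(s)$ shows that for each fixed $(n,k)$ the scalar sequence $(\v^{(j)}_{n,k})_j$ is Cauchy in $\C$; let $\v\in F_{d+1}$ be the trigonometric series whose coefficients are the limits $\v_{n,k}$. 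Fix $\eta>0$ and $J$ with $\|\v^{(j)}-\v^{(l)}\|_s\leq\eta$ for all $j,l\geq J$, so that $|\v^{(j)}_{n,k}-\v^{(l)}_{n,k}|\leq\eta\,w_{n,k}(s)$ for all $n,k$; letting $l\to\infty$ yields $|\v^{(j)}_{n,k}-\v_{n,k}|\leq\eta\,w_{n,k}(s)$ for all $n,k$ and all $j\geq J$. Thus $\v-\v^{(J)}$ satisfies \eqref{defi.EEs} with constant $\eta$, so $\v-\v^{(J)}\in\EE_s$, whence $\v=(\v-\v^{(J)})+\v^{(J)}\in\EE_s$; and $\|\v^{(j)}-\v\|_s\leq\eta$ for $j\geq J$, so $\v^{(j)}\to\v$ in $\EE_s$. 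This gives completeness.

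I expect no genuine obstacle. There is no convergence issue to handle, because the elements of $F_{d+1}$ are \emph{formal} series, so $\EE_s$ is literally a space of coefficient arrays subject to a growth constraint. The restriction $s<\und{s}$ (equivalently $s<\und{s}_1$, which by \eqref{def.und.s.1} and positivity of $\g$ makes the exponent $M'-\int_0^s\g(\tau)\,d\tau$ positive) plays no role here — it is needed elsewhere, e.g.\ for the algebra structure. The only two points worth a line of care are the strict positivity of the weights $w_{n,k}(s)$, used both to make the quotient meaningful and to ensure $\|\cdot\|_s$ separates points, and the fact that the non-strict inequality defining \eqref{defi.EEs} is stable under coefficientwise limits, which is precisely what returns the limit series to $\EE_s$.
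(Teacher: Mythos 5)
Your proof is correct and follows essentially the same route as the paper: both unwind \eqref{defi.EEs} into coefficientwise bounds $|\v_{n,k}|\leq C\,w_{n,k}(s)$ with strictly positive weights, identifying $\EE_s$ with a weighted $\ell^\infty$-space on $\Z\times\N^d$. The only difference is cosmetic — the paper phrases this as an isometric isomorphism onto $\ell^{\infty}(\C^{\Z\times\N^{d}})$ and quotes its completeness, while you write out the standard Cauchy-sequence argument explicitly.
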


\begin{proof}
	Any $\v$ in $\EE_{s}$ is uniquely determined by the sequence of coefficients $(\v_{n,k})_{n\in\Z,k\in\N^{d}}$, where
	$$
		\v(x,\theta) = \sum_{n\in\Z} \v_n(x) e^{in\theta} \quad \text{with} \quad \v_n(x) = \sum_{k\in\N^{d}} \v_{n,k}x^{k} .
	$$ 
	
	\noindent By the definition of majoring series \eqref{def.maj} and notation \eqref{def.Phi_k.t}, the definition \eqref{defi.EEs} is equivalent to
	$$
		|\v_{n,k}| \leq C \frac{\displaystyle c_1}{\displaystyle n^2+1}\exp\Big({}-\big(M' - \int_{0}^{s} \g(\tau) d\tau  \big)\left< n \right>\Big) \Phi_{k}(\e s) \quad ,\quad \forall n\in\Z ,\; k\in\N^{d} ,\; 0\leq s < (\e \rho)^{-1} 
	$$
	
	\noindent where $\g$ is defined in \eqref{def.g}. Thus the map
	\be
		\mathcal{O}(s): \v\in\EE_{s} \mapsto \left(\v_{n,k} \mathcal{O}_{n,k}(s)\right)_{n\in\Z, k\in\N^{d}} 
	\ee
	\noindent with
	$$
		\mathcal{O}_{n,k}(s) = \left(\frac{\displaystyle c_1}{\displaystyle n^2+1}\exp\Big({}-\big(M' - \int_{0}^{s} \g(\tau) d\tau \big)\left< n \right>\Big) \Phi_{k}(\e s)\right)^{-1}
	$$
	
	\noindent is onto $ \ell^{\infty}(\C^{\Z\times\N^{d}}) $. By definition of the norm in $\EE_{s}$, the map $\mathcal{O}(s)$ is clearly an isometric isomorphism between $\EE_{s}$ and $\ell^{\infty}(\C^{\Z\times\N^{d}})$. This implies that $\left(\EE_{s},||\cdot||_{s}\right)$ is a Banach space.
\end{proof}

This implies immediately the following

\begin{coro}	
	The space $(\EE,|||\cdot|||)$ is a Banach space. 
\end{coro}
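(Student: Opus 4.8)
The plan is to deduce completeness of $\EE$ from the completeness of each fibre $\EE_{s}$ established in the preceding Proposition, following the standard argument that a space of uniformly bounded sections of a family of Banach spaces, equipped with the supremum norm, is complete. Let $(\u^{(j)})_{j\in\N}$ be a Cauchy sequence in $(\EE,|||\cdot|||)$. For every fixed $s\in[0,\und{s})$ one has, directly from the definition \eqref{defi.norm} of the norm, $\|\u^{(j)}(s)-\u^{(l)}(s)\|_{s}\leq|||\u^{(j)}-\u^{(l)}|||$, so $(\u^{(j)}(s))_{j}$ is Cauchy in $\EE_{s}$ and converges to some $\v(s)\in\EE_{s}$ by the previous Proposition. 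This defines a candidate section $s\mapsto\v(s)$, i.e. a family of trigonometric series $\v(s,x,\theta)=\sum_{n}\v_{n}(s,x)e^{in\theta}$ with $\v_{n}(s,x)=\sum_{k}\v_{n,k}(s)x^{k}$.

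Next I would show that $\u^{(j)}\to\v$ in the norm of $\EE$. Fix $\e_{0}>0$ and choose $N$ with $|||\u^{(j)}-\u^{(l)}|||\leq\e_{0}$ for all $j,l\geq N$. Freezing $s$ and passing to the limit $l\to\infty$ coefficient by coefficient — which is legitimate because, as recorded in the proof of the previous Proposition, $\|\cdot\|_{s}$ is the supremum over $(n,k)$ of the moduli of the normalized coefficients, and convergence in $\EE_{s}$ implies coefficientwise convergence — gives $\|\u^{(j)}(s)-\v(s)\|_{s}\leq\e_{0}$ for all $j\geq N$ and all $s\in[0,\und{s})$. Taking the supremum over $s$ yields $|||\u^{(j)}-\v|||\leq\e_{0}$; in particular $\v(s)\in\EE_{s}$ for every $s$, so the fibrewise membership condition in \eqref{defi.space} is met.

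The one point that still requires care — and it is the main obstacle — is that $\v$ must lie in $F_{d+2}$, i.e. each coefficient $s\mapsto\v_{n,k}(s)$ must be $C^{\infty}$ on $[0,\und{s})$. Coefficientwise the convergence $\u^{(j)}_{n,k}\to\v_{n,k}$ is uniform on $[0,\und{s})$, which by itself grants only continuity of the limit: the supremum norm $|||\cdot|||$ does not control $s$-derivatives. To recover smoothness one must use that the Cauchy sequences occurring in the sequel are the Picard iterates of the Duhamel operator $T$ in \eqref{defi.T}; since $T$ integrates in $s$ and the propagator estimates of Lemma \ref{lemma.growth.propa} are uniform on $[0,\und{s})$, the $s$-derivatives of these iterates of any fixed order are bounded uniformly in $j$ on compact subsets of $[0,\und{s})$, so Arzel\`a--Ascoli applied to each coefficient upgrades the uniform limit $\v_{n,k}$ to a $C^{\infty}$ function. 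Hence $\v\in\EE$, and together with the norm convergence above this proves that $(\EE,|||\cdot|||)$ is complete. (Absent this extra equicontinuity one still obtains completeness of the larger space of sections that are merely continuous in $s$, which would already suffice to run the fixed-point argument, the smoothness of the fixed point being then recovered a posteriori from the equation.)
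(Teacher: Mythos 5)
Your fibrewise argument is exactly the paper's (implicit) proof: the paper derives the corollary "immediately" from the proposition that each $\EE_{s}$ is a Banach space, via the isometry with $\ell^{\infty}$, and your steps — Cauchy at each frozen $s$, coefficientwise identification of the limit, passing to the limit in the uniform bound and taking the supremum over $s$ in \eqref{defi.norm} — are the standard way to make that "immediately" explicit. You are also right that there is a genuine subtlety the paper glosses over: $|||\cdot|||$ does not control $s$-derivatives, so the limit coefficients $\v_{n,k}(s)$ are a priori only continuous in $s$, whereas membership in $F_{d+2}$ as written requires $C^{\infty}$ dependence on $s$.

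However, your proposed repair does not prove the corollary as stated. Completeness is a statement about \emph{every} Cauchy sequence in $\EE$; you cannot use the provenance of the sequences "occurring in the sequel" (Picard iterates of $T$) and Arzel\`a--Ascoli, because that only shows those particular sequences converge in $\EE$, not that the space is complete. In fact, with the $C^{\infty}$-in-$s$ requirement taken literally, the space is not complete, since a uniform limit of smooth coefficients need only be continuous. The correct resolution is the one in your closing parenthesis: read the definition of $F_{d+2}$ with relaxed $s$-regularity (continuity, or even just boundedness, of the coefficients in $s$), for which your fibrewise argument is a complete proof, and recover smoothness of the actual fixed point a posteriori from the Duhamel equation \eqref{fixed.point.equation}, whose right-hand side is an $s$-integral against the smooth propagator. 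So: keep the main argument, state the relaxed reading of the space (or note that the corollary must be understood that way), and remove the Picard/Arzel\`a--Ascoli step from the completeness proof proper, demoting it to the a posteriori regularity remark.
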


\subsubsection{The spaces $\EE_s$ are Banach algebra}

\begin{lemma}
	\label{lemma.algebra}
	For all $s\in[0,\und{s})$, for all $\v$ and $\w$ in $\EE_{s}$, the product $ \v\w$ is in $\EE_{s} $ and we have
	\be
		||\v \w||_{s} \leq ||\v||_{s} \, ||\w||_{s} .
	\ee
\end{lemma}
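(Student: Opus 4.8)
The plan is to work at the level of Taylor--Fourier coefficients and to reduce the claim to the defining inequality \eqref{defi.EEs} for the product $\v\w$. Writing $\v(x,\theta)=\sum_{n}\v_n(x)e^{in\theta}$ and $\w(x,\theta)=\sum_n\w_n(x)e^{in\theta}$, the $n$-th Fourier mode of the product is the convolution
$$(\v\w)_n(x)=\sum_{p\in\Z}\v_p(x)\,\w_{n-p}(x),$$
in which each Taylor coefficient of $\v_p(x)\w_{n-p}(x)$ is a finite sum, and the bilateral sum over $p$ converges absolutely thanks to the summable $p$-prefactors $c_1/(p^2+1)$ in \eqref{defi.EEs}. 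Set $a(s)=M'-\int_{0}^{s}\g(\tau)d\tau$; by \eqref{def.und.s.1} and \eqref{finaltime} one has $a(s)\geq 0$ for all $s\in[0,\und{s})$. By definition of the norm $\|\cdot\|_s$, for every $p\in\Z$ there holds $\v_p\prec_s\|\v\|_s\,\tfrac{c_1}{p^2+1}\,e^{-a(s)\langle p\rangle}\,\Phi(RX+\e\rho s)$, and likewise for $\w_{n-p}$.

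First I would apply the compatibility of $\prec$ with multiplication (point 3 of Lemma \ref{majoring.properties}) to each summand $\v_p\w_{n-p}$; this produces a factor $\Phi(RX+\e\rho s)^2$. Since $\Phi^2\prec\Phi$ holds for the one-variable series (point 4 of Lemma \ref{majoring.properties}) and composition with the nonnegative-coefficient series $z_1\mapsto RX_1+\cdots+RX_d+\e\rho s$ preserves $\prec$, one gets $\Phi(RX+\e\rho s)^2\prec\Phi(RX+\e\rho s)$, so the two $\Phi$-factors collapse to a single one. Next, the subadditivity $\langle p\rangle+\langle n-p\rangle\geq\langle n\rangle$ together with $a(s)\geq 0$ gives $e^{-a(s)\langle p\rangle}e^{-a(s)\langle n-p\rangle}\leq e^{-a(s)\langle n\rangle}$. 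Summing over $p\in\Z$ and invoking Lemma \ref{lemma.c_1} for the convolution of the weights $c_1/(p^2+1)$, I obtain
$$(\v\w)_n(x)\prec_s\|\v\|_s\,\|\w\|_s\,\frac{c_1}{n^2+1}\,e^{-a(s)\langle n\rangle}\,\Phi(RX+\e\rho s),\qquad\forall\,n\in\Z,$$
which is exactly \eqref{defi.EEs} with constant $C=\|\v\|_s\|\w\|_s$. Hence $\v\w\in\EE_s$ with $\|\v\w\|_s\leq\|\v\|_s\|\w\|_s$; combined with the Banach-space property already established, this makes $\EE_s$ a Banach algebra.

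The argument is essentially bookkeeping resting on the structural lemmas of Section \ref{subsection.majoring}, so there is no serious obstacle; the three points that deserve care are the following. (i) One must verify the absolute convergence of the bilateral sum $\sum_p\v_p\w_{n-p}$ before manipulating it termwise, which follows from the summability of $p\mapsto c_1/(p^2+1)$ in the bounds \eqref{defi.EEs}. (ii) One must pass from $\Phi^2\prec\Phi$ in one variable to the same relation for the composed series $\Phi(RX+\e\rho s)$, using that substituting a series with nonnegative coefficients preserves the majoration relation. (iii) One uses the sign $a(s)\geq 0$, which is precisely why the algebra property is asserted for $s\in[0,\und{s})$ rather than on the full interval $[0,(\e\rho)^{-1})$.
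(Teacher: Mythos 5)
Your proposal is correct and follows essentially the same route as the paper's proof: decompose the Fourier mode of the product as a convolution, bound each factor by the defining inequality \eqref{defi.EEs}, collapse $\Phi^2\prec\Phi$ via Lemma \ref{majoring.properties}, use $\langle p\rangle+\langle q\rangle\geq\langle n\rangle$ with the positivity of $M'-\int_0^s\g\,d\tau$ for $s<\und{s}$, and conclude with Lemma \ref{lemma.c_1} for the weight convolution. The extra remarks on absolute convergence and on composing the majoration with the nonnegative series are fine additional bookkeeping but do not change the argument.
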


\begin{proof}
	Starting with the definition of $\EE_{s}$ \eqref{defi.EEs}, we obtain first for all $n\in\Z$ the following
	\begin{eqnarray*}
		(\v \w)_{n}(x) & = & \sum_{p+q=n}  \v_p(x) \w_q(x) \\
			& \prec & \sum_{p+q=n} 
				{\begin{aligned}[t]
				& ||\v||_{s} \frac{\displaystyle c_1}{\displaystyle p^2+1}\exp\Big({}-(M' - \int_{0}^{s} \g(\tau) d\tau )\left< p \right>\Big) \Phi\left(R X+\e\rho s\right) \\
				\times & ||\w||_{s} \frac{\displaystyle c_1}{\displaystyle q^2+1}\exp\Big({}-(M' - \int_{0}^{s} \g(\tau) d\tau )\left< q \right>\Big) \Phi\left(R X+\e\rho s\right)
				\end{aligned}} \\
			& \prec & ||\v||_{s} \, ||\w||_{s}\,\Phi^2\left(R X+\e\rho s\right) \,\sum_{p+q=n} \frac{\displaystyle c_1}{\displaystyle p^2+1}\frac{\displaystyle c_1}{\displaystyle q^2+1}\exp\Big({}-(M' - \int_{0}^{s} \g(\tau) d\tau )(\left< p \right>+\left< q \right>)\Big) .
	\end{eqnarray*}
	
	\noindent Recalling that $\Phi^2 \prec \Phi$ by Lemma \ref{majoring.properties}, we have 
	\begin{eqnarray*}
		(\v \w)_{n}(x) & \prec & ||\v||_{s} \, ||\w||_{s}\,\Phi\left(R X+\e\rho s\right)\, \sum_{p+q=n} \frac{\displaystyle c_1}{\displaystyle p^2+1}\frac{\displaystyle c_1}{\displaystyle q^2+1}\exp\Big({}-(M' - \int_{0}^{s} \g(\tau) d\tau )(\left< p \right>+\left< q \right>)\Big) \\
		  & \prec & ||\v||_{s} \, ||\w||_{s} \Phi\left(R X+\e\rho s\right) \exp\Big({}-(M' - \int_{0}^{s} \g(\tau) d\tau )\left< n \right>\Big)  \sum_{p+q=n} \frac{\displaystyle c_1}{\displaystyle p^2+1}\frac{\displaystyle c_1}{\displaystyle q^2+1} 
	\end{eqnarray*}
	
	\noindent because $\left< p \right>+\left< q \right> \geq \left< p+q\right> = \left<n\right>$ and $ M' - \int_{0}^{s} \g(\tau) d\tau  $ is positive for all $s < \und{s}$, and $\g$ is defined in \eqref{def.g}. And by definition \eqref{definition.c_1} of $c_1$ we have finally
	\begin{eqnarray*}
		(\v \w)_{n}(x) & \prec & ||\v||_{s} \, ||\w||_{s}\frac{\displaystyle c_1}{\displaystyle n^2+1}\exp\Big({}-(M' - \int_{0}^{s} \g(\tau) d\tau )\left< n \right>\Big) \Phi\left(R X+\e\rho s\right)
	\end{eqnarray*}
	
	\noindent which implies the result.

\end{proof}

This implies immediately the following

\begin{coro}
	\label{coro.algebra}
	The space $\EE$ is an algebra, and the norm $|||\cdot|||$ is an algebra norm.
\end{coro}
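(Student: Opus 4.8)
The plan is to deduce this directly from Lemma \ref{lemma.algebra}, which already carries all the analytic content; what remains is only bookkeeping about the definition \eqref{defi.space} of $\EE$ and the supremum norm \eqref{defi.norm}.

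First I would note that $\EE$ is a linear subspace of $F_{d+2}$, so it suffices to check stability under products together with submultiplicativity. Given $\u,\v\in\EE$, their (Cauchy) product $\u\v$ is again a trigonometric series in $\theta$ whose $n$-th Fourier coefficient is the finite-free sum $\sum_{p+q=n}\u_p(s,x)\v_q(s,x)$, a formal series in $x$ whose coefficients are finite sums of products of the functions $\u_{p,k}(s)$ and $\v_{q,k}(s)$, each $C^{\infty}$ in $s$; hence $\u\v\in F_{d+2}$. This is the only point requiring a word of care, and it is immediate since, working formally, only finite sums enter each Fourier mode.

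Next, fix $s\in[0,\und{s})$. Since $\u,\v\in\EE$ we have $\u(s),\v(s)\in\EE_{s}$ by \eqref{defi.space}, so Lemma \ref{lemma.algebra} gives $(\u\v)(s)=\u(s)\,\v(s)\in\EE_{s}$ together with the bound $\|(\u\v)(s)\|_{s}\leq\|\u(s)\|_{s}\,\|\v(s)\|_{s}$. As this holds for every $s<\und{s}$, we conclude $\u\v\in\EE$, again by \eqref{defi.space}, so $\EE$ is an algebra.

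Finally, for the norm estimate I would simply take suprema over $s$ in the pointwise inequality above: for each $s\in[0,\und{s})$,
$$\|(\u\v)(s)\|_{s}\leq\|\u(s)\|_{s}\,\|\v(s)\|_{s}\leq\Big(\sup_{0\leq s'<\und{s}}\|\u(s')\|_{s'}\Big)\Big(\sup_{0\leq s'<\und{s}}\|\v(s')\|_{s'}\Big)=|||\u|||\,|||\v|||,$$
and taking the supremum over $s$ on the left-hand side yields $|||\u\v|||\leq|||\u|||\,|||\v|||$ by \eqref{defi.norm}. There is no genuine obstacle: Lemma \ref{lemma.algebra} does the work at each fixed time, and passing to $\EE$ is a routine supremum argument; the mild subtlety is only the closure of $F_{d+2}$ under the $\theta$-Cauchy product noted above.
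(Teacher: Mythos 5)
Your argument is correct and is exactly the route the paper intends: the corollary is stated there as an immediate consequence of Lemma \ref{lemma.algebra}, and your fixed-time estimate followed by the supremum over $s$ in \eqref{defi.norm} is all the bookkeeping required. One small correction to your "word of care": the $\theta$-convolution $\sum_{p+q=n}\u_p\v_q$ is an infinite sum over $p\in\Z$ (only the sums over the $x$-multi-indices are finite), so membership $\u\v\in F_{d+2}$ is not a purely finite formal matter; the convergence comes from the $\frac{c_1}{p^2+1}$ decay built into \eqref{defi.EEs} via Lemma \ref{lemma.c_1}, which is precisely the estimate underlying the Lemma \ref{lemma.algebra} bound you invoke.
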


\subsubsection{Action of holomorphic functions}


\begin{lemma}
	\label{lemma.holomorphic}
	Let $H(t,x,u)$ be a holomorphic function on a neighborhood of $(0,0,0)\in\R_{t}\times\R_{x}^{d}\times\R_{u}^{N}$. Then for $\e$ small enough there are constants $C_H$, $R_H$ and $\rho_H$ which depend only on $H$ and $c_0$, such that for all $R\geq R_H$ and $\rho\geq\rho_H$,  
	\be
		\label{esti.holomorphic}
		\forall \,\u\in B_{\EE(R,\rho)}(0,1): \quad |||\H(\u)|||\leq C_{H}2^{N}
	\ee
	
	\noindent where $\H$ is defined by \eqref{def.mathbf.H} and $|||\cdot|||$ is defined by \eqref{defi.norm}.

\end{lemma}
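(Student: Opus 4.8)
The plan is to expand $H$ in powers of $u$ with coefficients holomorphic in $(t,x)$, substitute $t=\e s$ and $u=\e\u$, and sum the resulting series in the Banach algebra $\EE$ (Corollary~\ref{coro.algebra}); the factor $2^{N}$ will emerge as the value, as $\e\to 0$, of a geometric series in the $N$ components of $u$. Write $H(t,x,u)=\sum_{k\in\N^{N}}H^{(k)}(t,x)\,u^{k}$, each $H^{(k)}$ holomorphic near $(0,0)$. Reading off the $u^{k}$-coefficient in the product majorant of Lemma~\ref{lemma.H}, there are constants $C_{H},R_{H},\rho_{H},a_{H}>0$ depending only on $H$ and $c_{0}$ with
\[
	H^{(k)}(t,x)\ \prec\ C_{H}\,a_{H}^{|k|}\,\Phi(R_{H}X+\rho_{H}t),\qquad\forall\,k\in\N^{N}.
\]
When $H$ is matrix-valued, as in the application, this and all that follows is read entrywise.

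Substituting $t=\e s$ and comparing Taylor coefficients by means of \eqref{def.Phi_k.t} gives $H^{(k)}(\e s,x)\prec_{s}C_{H}a_{H}^{|k|}\Phi(R_{H}X+\e\rho_{H}s)$ as a formal series in $x$ with parameter $s$; then, as soon as $R\geq R_{H}$ and $\rho\geq\rho_{H}$ (so $\e\rho\geq\e\rho_{H}$), the comparison \eqref{R_0.R} of Lemma~\ref{lemma.phi.prec.Phi} upgrades this to $H^{(k)}(\e s,x)\prec_{s}C_{H}a_{H}^{|k|}\Phi(RX+\e\rho s)$. Because $H^{(k)}(\e s,x)$ is independent of $\theta$, it lies in $\EE$ through its $n=0$ Fourier mode alone, whose reference coefficient in \eqref{defi.EEs} is exactly $c_{1}\Phi(RX+\e\rho s)$ — the zero mode carries no exponential weight. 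Hence
\[
	|||\,H^{(k)}(\e s,\cdot)\,|||\ \leq\ \frac{C_{H}}{c_{1}}\,a_{H}^{|k|},\qquad\forall\,k\in\N^{N},
\]
uniformly in $\e$, in $s\in[0,\und{s})$, and in the remaining parameters $M',\beta$.

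For the powers $(\e\u)^{k}$: if $\u\in B_{\EE(R,\rho)}(0,1)$ then each component has $|||\u_{i}|||\leq|||\u|||\leq 1$, so the algebra inequality of Corollary~\ref{coro.algebra} gives $|||\,(\e\u)^{k}\,|||=|||\prod_{i}(\e\u_{i})^{k_{i}}|||\leq\prod_{i}(\e\,|||\u_{i}|||)^{k_{i}}\leq c_{2}\,\e^{|k|}$ for all $k\in\N^{N}$, where $c_{2}:=\max\{1,(c_{0}c_{1})^{-1}\}$ serves only to cover $k=0$ (there $(\e\u)^{0}=1$ and $|||1|||\leq(c_{0}c_{1})^{-1}$). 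Since $\H(\u)=H(\e s,x,\e\u)=\sum_{k\in\N^{N}}H^{(k)}(\e s,x)\,(\e\u(s,x,\theta))^{k}$ as formal series, the triangle inequality together with the algebra property yields
\[
	|||\,\H(\u)\,|||\ \leq\ \sum_{k\in\N^{N}}|||\,H^{(k)}(\e s,\cdot)\,|||\;|||\,(\e\u)^{k}\,|||\ \leq\ \frac{c_{2}C_{H}}{c_{1}}\sum_{k\in\N^{N}}(a_{H}\e)^{|k|}\ =\ \frac{c_{2}C_{H}}{c_{1}}\,(1-a_{H}\e)^{-N}.
\]
The finiteness of the right-hand side also legitimizes the series identity. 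For $\e$ small enough that $a_{H}\e\leq 1/2$ one has $(1-a_{H}\e)^{-N}\leq 2^{N}$, and so \eqref{esti.holomorphic} holds with $c_{2}C_{H}/c_{1}$ — which still depends only on $H$ and $c_{0}$ — playing the role of $C_{H}$; we may always enlarge $R_{H},\rho_{H}$ harmlessly.

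The one genuinely delicate step is the passage, in the second paragraph, from the majorant relation $\prec_{s}$ for the $\theta$-independent coefficients $H^{(k)}(\e s,\cdot)$ to a bound on their $\EE$-norm that is uniform in $\e$: this is precisely where one uses that the $n=0$ mode in \eqref{defi.EEs} has trivial exponential weight and that \eqref{R_0.R} lets the running radii $R,\rho$ absorb the radii $R_{H},\rho_{H}$ intrinsic to $H$. Everything else is Cauchy/majorant bookkeeping, and the $2^{N}$ is simply the $\e\to 0$ value, rounded up, of $\prod_{i=1}^{N}(1-a_{H}\e)^{-1}$.
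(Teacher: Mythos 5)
Your proof is correct and follows essentially the same route as the paper: both rest on the product majorant of Lemma \ref{lemma.H}, the comparison \eqref{R_0.R}, the Banach-algebra property of $\EE$ (Corollary \ref{coro.algebra}), and the smallness $\e a_H \le 1/2$, which is where the factor $2^N$ comes from. The only difference is bookkeeping: you expand $H$ in powers of $u$ and sum a geometric series termwise (which incidentally makes explicit the substitution step the paper leaves implicit, in particular that $\theta$-independent series controlled by $\Phi$ sit in $\EE$ through their $n=0$ Fourier mode with a norm that is $O(1)$ uniformly in $\e$), whereas the paper composes directly with the majorant $\Phi(R_H X+\e\rho_H s)\prod_j (1-\e a_H \u_j)^{-1}$ and estimates its norm via the algebra inequality and $\Phi^2 \prec \Phi$.
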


\begin{proof}
	
	Thanks to Lemma \ref{lemma.H} we have
	$$
		H(t,x,u) \prec C_H \Phi(R_H X + \rho_H t) \prod_{j=1}^{N}\frac{\dsp 1}{1 - a_H u_j}
	$$
	
	\noindent Let $\u$ be in $ B_{\EE}(0,1)$ with $\EE=\EE(R,\rho)$ for $R \geq R_H$ and $\rho \geq \rho_H$. For $\e$ small enough we have $ \e a_{H} < 1/2 $ so that $ ||| \e a_H \u ||| \leq 1/2 $. We now prove that $\H(s,x,\u)$ is indeed in $\EE$. By Lemma \ref{lemma.H} it suffices to prove that
	$$ (s,x,\theta) \mapsto C_H \Phi(R_H X + \e\rho_H s) \prod_{j=1}^{N}\frac{\dsp 1}{1 - \e a_H \u_j(s,x,\theta)} $$
	
	\noindent is in $\EE$. Because $\EE$ is a Banach algebra (Corollary \ref{coro.algebra})and $\e a_{H} < 1/2$, the operator 
	$$ \u \mapsto \prod_{j=1}^{N}\left(1 - \e a_H \u_j\right)^{-1} $$
	
	\noindent is a bounded operator and we have
	$$
		\left|\left|\left|\prod_{j=1}^{N}\frac{\dsp 1}{1 - \e a_H \u_j(s,x,\theta)}\right|\right|\right| \leq \prod_{j=1}^{N}\frac{\dsp 1}{1 - \e a_H |||\u|||} \leq \left(\frac{\dsp 1}{1 - 1/2}\right)^{N} = 2^{N}
	$$
	
	\noindent By \eqref{R_0.R}, we have $\Phi(R_H X + \e\rho_H s) \prec_{s} \Phi(R X+\e\rho s)$ for all $R\geq R_H$ and $\rho\geq\rho_H$, so that 
	\begin{eqnarray*}
		\Phi(R_H X + \e\rho_H s)  \Phi(R X+\e\rho s) & \prec_{s} & \Phi(R X+\e\rho s)^2 \\
			& \prec_{s} & \Phi(R X+\e\rho s)
	\end{eqnarray*}
	\noindent by \eqref{Phi2}. Hence $(s,x,\theta) \mapsto C_H \Phi(R_H X + \e\rho_H s) \prod_{j=1}^{N}(1 - \e a_H \u_j(s,x,\theta))^{-1}$ is in $\EE$, and then for all $\u\in\EE$ in the ball $B_{\EE}(0,1)$ the bound \eqref{esti.holomorphic} holds.
	
\end{proof}

%

In the operators $T^{[\theta]}$, $T^{[x]} $ and $T^{[\u]} $ defined by \eqref{def.T_theta}, \eqref{def.T_x} and \eqref{def.T_u}, there appear $A$, $\und{A}$, $A_{j}$ and $F$. In Corollary \ref{reg_dtheta.bis}, there will appear also $A_{u_j}$, all of which are analytic functions in variables $(t,x,u)\in\R\times\R^{d}\times\R^{N}$ in a neighborhood of $(0,0,0)  \in \R_{t}\times\R_{x}^{d}\times\R_{u}^{N}$. The previous Lemma applies:

\begin{coro}
	\label{coro.norms}
	There are constants $R_0$ and $\rho_0$ such that for all $R\geq R_0$, $\rho \geq \rho_0$ and $\e$ small enough:
	\be
		\label{action.A}
		\forall \,\u\in B_{\EE(R,\rho)}(0,1): \qquad |||\H(\u) ||| \lesssim 1
	\ee
	\noindent with $H$ equals to $A$, $\und{A}$, $A_j$, $F$, or $A_{u_j}$.
\end{coro}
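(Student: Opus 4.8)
The plan is to derive this directly from Lemma \ref{lemma.holomorphic} by collecting finitely many constants, so the work is essentially bookkeeping. First I would check that every function appearing in the list --- $A$, $\und{A}$, the $A_j$, $F$, and the $A_{u_j}$ --- is holomorphic in a neighbourhood of $(0,0,0)\in\R_t\times\R_x^d\times\R_u^N$: the $A_j$ and $f$ are so by the standing analyticity assumption; the matrix $F$ of \eqref{Cauchy.bis} inherits analyticity from $f$ and from the analytic Cauchy--Kovalevskaya solution $u_1$; the symbol $A=\sum_j A_j\xi_{0,j}$ and $\und{A}=A(\cdot,\cdot,\vec u_0)$ are linear combinations and restrictions of analytic functions, hence analytic; and each $A_{u_j}=\d_{u_j}A$ is a partial derivative of an analytic function, hence analytic. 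So Lemma \ref{lemma.holomorphic} is applicable to each of them.

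Next I would apply Lemma \ref{lemma.holomorphic} once for each $H$ in the finite family $\{A,\und{A},A_1,\dots,A_d,F,A_{u_1},\dots,A_{u_d}\}$. For each such $H$ the lemma produces constants $C_H$, $R_H$, $\rho_H$ depending only on $H$ and $c_0$, together with a smallness threshold on $\e$, such that $|||\H(\u)|||\le C_H 2^N$ whenever $R\ge R_H$, $\rho\ge\rho_H$, $\e$ is small enough, and $\u\in B_{\EE(R,\rho)}(0,1)$. Then I would set $R_0=\max_H R_H$ and $\rho_0=\max_H\rho_H$, take $\e$ below the minimum of the finitely many thresholds, and observe that for $R\ge R_0$, $\rho\ge\rho_0$ all the hypotheses of Lemma \ref{lemma.holomorphic} hold simultaneously for every member of the family; hence $|||\H(\u)|||\le (\max_H C_H)\,2^N$ for all $\u\in B_{\EE(R,\rho)}(0,1)$. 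Since $(\max_H C_H)2^N$ is a constant independent of $\e$, this is exactly the bound \eqref{action.A} in the sense of notation \eqref{notation.lesssim}.

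There is no genuine obstacle here: the entire analytic content sits in Lemma \ref{lemma.holomorphic}, and the corollary is the routine remark that a maximum over a finite index set of admissible radii $(R_H,\rho_H)$ and of the constants $C_H$ again works, using monotonicity of the spaces $\EE(R,\rho)$ in $R,\rho$ (property 3 of Lemma \ref{lemma.phi.prec.Phi}) which already underlies the statement of Lemma \ref{lemma.holomorphic}. The only two points deserving a line of care are: (i) recording that the constants furnished by Lemma \ref{lemma.holomorphic} are uniform in $\e$ for $\e$ small, so that intersecting finitely many $\e$-thresholds is legitimate; and (ii) noting that $\EE(R,\rho)$ tacitly carries the same suppressed parameters $M'$ and $\beta$ throughout, which is harmless since, as emphasised just after Definition \ref{definition.EE_s}, none of the properties of the spaces $\EE_s$ depend on those parameters.
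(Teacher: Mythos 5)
Your proposal is correct and follows essentially the same route as the paper, which gives no separate argument beyond observing that $A$, $\und{A}$, $A_j$, $F$ and $A_{u_j}$ are all analytic near $(0,0,0)$ and that Lemma \ref{lemma.holomorphic} therefore applies, the uniformization of the finitely many constants $R_H$, $\rho_H$, $C_H$ and $\e$-thresholds being immediate. One cosmetic remark: the $A_{u_j}$ of Corollary \ref{reg_dtheta.bis} are the analytic factors in the expansion $A(t,x,u)-\und{A}(t,x)=\sum_j A_{u_j}(t,x,u)\,u_j$ (integral averages of $\d_{u_j}A$) rather than the partial derivatives themselves, but they are analytic in a neighborhood of the origin in either reading, so your argument is unaffected.
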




\subsection{Action of $U(s',s)$ on $\EE$}


Recall the growth of the Fourier modes of the propagator as showed in Lemma \ref{lemma.growth.propa}
$$
	|U_n(s',s,x)| \lesssim \omega^{-(m-1)}\,\exp\left( |n| \int_{s'}^{s} \g^{\sharp}(\tau) d\tau  \right) .
$$

\noindent Here, as opposed to \cite{metivier2005remarks}, the propagator $U_n$ does depend on $x$. As $U_n(s',s,x)$ is the solution of the differential equation \eqref{equation.propagator} and as $\und{A}(t,x)$ is analytic in $x$, so is $U_n(s',s,x)$. Using the Cauchy inequalities as in the proof of Lemma \ref{lemma.H}, we can prove in particular that 
\be
	\label{esti.prec.propa}
	U_n(s',s,x) \prec_{s',s} \omega^{-(m-1)}\,\exp\left( |n| \int_{s'}^{s} \g^{\sharp}(\tau) d\tau \right) \Phi(R_0 X)
\ee

\noindent for $R_0$ determined in Corollary \ref{coro.norms}. We use this result to determine precisely the action of the propagator on $\EE$.

\begin{lemma}
	\label{lemma.action.U}
	Given $\u$ in $\EE = \EE(R,\rho,M',\beta)$ then for all $n\in\Z$ ans $0 \leq s' \leq s < \und{s}$ there holds
	\be
		\label{precise.bound.action.U}
		U_n(s',s)\u_n(s',x) \prec_{s',s} C_n(s',s) \,\omega^{-(m-1)}\,||\u(s')||_{s'} \frac{\displaystyle c_1}{\displaystyle n^2+1} \, e^{{}-(M'-\int_{0}^{s}\g(\tau) d\tau)\left< n \right>} \Phi\left(R X+\e\rho s\right)
	\ee
	\be 
		\label{defi.Cn}
		\text{with} \quad C_n(s',s) = \exp\left({}- \left<n\right> \beta \,(s-s') \right) \leq 1 .
	\ee
	
	\noindent In particular we have
	\be
	\label{bound.lemma.action.U}
		\|U(s',s)\u(s')\|_s \leq \omega^{-(m-1)}\, \|\u(s')\|_{s'} \quad , \quad \forall \, 0 \leq s' \leq s < \und{s} .
	\ee
	
\end{lemma}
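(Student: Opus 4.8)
The plan is to obtain \eqref{precise.bound.action.U} by multiplying the majoring bound \eqref{esti.prec.propa} for the propagator mode $U_n(s',s,x)$ with the majoring bound for $\u_n(s',x)$ coming from $\u(s')\in\EE_{s'}$, and then to clean up the two structural factors separately: the spatial model $\Phi$ and the exponential weight in $\langle n\rangle$. The bound \eqref{bound.lemma.action.U} is then a formal consequence since $C_n(s',s)\le 1$.

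First I would recall that, since $\und{{\mathbf A}}(s,x)$ does not depend on $\theta$, the propagator acts diagonally on Fourier modes (as noted in the proof of Lemma \ref{lemma.growth.propa}), so that $\big(U(s',s)\u(s')\big)_n(x)=U_n(s',s,x)\,\u_n(s',x)$ — a matrix times a vector. Working componentwise and using the compatibility of $\prec$ with products (Lemma \ref{majoring.properties}, items 2--3), together with \eqref{esti.prec.propa} and the defining inequality \eqref{defi.EEs} of $\EE_{s'}$, I get
$$
U_n(s',s,x)\,\u_n(s',x)\ \prec_{s',s}\ \omega^{-(m-1)}\,\|\u(s')\|_{s'}\,\frac{c_1}{n^2+1}\,e^{E_n(s',s)}\,\Phi(R_0X)\,\Phi(RX+\e\rho s'),
$$
where $E_n(s',s)=|n|\int_{s'}^{s}\g^{\sharp}(\tau)\,d\tau-\big(M'-\int_{0}^{s'}\g(\tau)\,d\tau\big)\langle n\rangle$, and where the harmless dimensional factor $N$ produced by the matrix--vector sum is absorbed (as it already is in \eqref{esti.prec.propa}).

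Next I would reduce the product of $\Phi$'s: by \eqref{R_0.R} (applicable since $R\ge R_0$) and \eqref{t'.t} one has $\Phi(R_0X)\prec_s\Phi(RX+\e\rho s)$ and $\Phi(RX+\e\rho s')\prec_s\Phi(RX+\e\rho s)$ since $s'\le s$, hence $\Phi(R_0X)\,\Phi(RX+\e\rho s')\prec_s\Phi(RX+\e\rho s)^2\prec_s\Phi(RX+\e\rho s)$ by \eqref{Phi2}, exactly as in the proof of Lemma \ref{lemma.holomorphic}. For the exponent, using $\g=\g^{\sharp}+\beta$ (definition \eqref{def.g}) I split the integral at $s'$:
$$
M'-\int_{0}^{s}\g(\tau)\,d\tau=\Big(M'-\int_{0}^{s'}\g(\tau)\,d\tau\Big)-\int_{s'}^{s}\g^{\sharp}(\tau)\,d\tau-\beta(s-s'),
$$
so that $-\big(M'-\int_0^s\g\big)\langle n\rangle-\langle n\rangle\beta(s-s')=-\big(M'-\int_0^{s'}\g\big)\langle n\rangle+\langle n\rangle\int_{s'}^{s}\g^{\sharp}(\tau)\,d\tau$. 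Since $\g^{\sharp}$ is positive and $|n|\le\langle n\rangle$, we get $E_n(s',s)\le-\big(M'-\int_0^s\g\big)\langle n\rangle-\langle n\rangle\beta(s-s')$, i.e. $e^{E_n(s',s)}\le C_n(s',s)\,e^{-(M'-\int_0^s\g)\langle n\rangle}$ with $C_n(s',s)=e^{-\langle n\rangle\beta(s-s')}\le 1$, which is precisely \eqref{defi.Cn}. Substituting the two reductions gives \eqref{precise.bound.action.U}; dropping $C_n(s',s)\le1$ and reading off Definition \eqref{defi.norm.s} of $\|\cdot\|_s$ yields \eqref{bound.lemma.action.U} (note that $M'-\int_0^s\g>0$ for $s<\und{s}\le\und{s}_1$, by \eqref{def.und.s.1} and \eqref{finaltime}, so the right-hand side is genuinely of $\EE_s$-type).

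The computation is essentially bookkeeping, and I do not expect a real obstacle. The only point that requires care is the exponent manipulation: splitting $\int_0^s$ at $s'$ and trading the factor $|n|$ appearing in the propagator growth against the factor $\langle n\rangle$ carried by the $\EE_s$-weight — this is exactly where the decay factor $C_n(s',s)=e^{-\langle n\rangle\beta(s-s')}\le1$ is produced, and it is what will later make the fixed-point operator contractive.
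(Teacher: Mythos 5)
Your proposal is correct and follows essentially the same route as the paper: multiply \eqref{esti.prec.propa} by the $\EE_{s'}$-bound \eqref{defi.EEs}, pass from $\Phi(RX+\e\rho s')$ to $\Phi(RX+\e\rho s)$ via \eqref{t'.t}, and rework the exponent using $\g=\g^{\sharp}+\beta$, $|n|\le\langle n\rangle$ and $\g^{\sharp}\ge0$ to extract $C_n(s',s)=e^{-\langle n\rangle\beta(s-s')}$. The only (harmless) difference is that you treat the factor $\Phi(R_0X)$ from \eqref{esti.prec.propa} explicitly via \eqref{R_0.R} and \eqref{Phi2}, whereas the paper's proof absorbs it tacitly.
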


\begin{proof}
	By the estimate \eqref{esti.u} for $\u\in\EE$ we have
	$$
		\u_n(s',x) \prec_{s'} ||\u(s')||_{s'} \frac{\displaystyle c_1}{\displaystyle n^2+1}\exp\Big({}-(M' - \int_{0}^{s'} \g(\tau)d\tau )\left< n \right>\Big) \Phi\left(R X+\e\rho s'\right)
	$$
	
	\noindent where $\g$ is defined in \eqref{def.g}. By estimate \eqref{esti.prec.propa} and the multiplicative property of $\prec$ there holds
	\begin{eqnarray*}
		U_n(s',s)\u_n(s',x) & \prec_{s',s} & \omega^{-(m-1)}\,\exp\left(|n| \int_{s'}^{s} \g^{\sharp}(\tau) d\tau \right) \\
		& & \times||\u(s')||_{s'} \frac{\displaystyle c_1}{\displaystyle n^2+1}\exp\Big({}-(M'- \int_{0}^{s'} \g(\tau)d\tau )\left< n \right>\Big) \Phi\left(R X+\e\rho s'\right) \\
		  & \prec_{s',s} & \omega^{-(m-1)}\,||\u(s')||_{s'} \frac{\displaystyle c_1}{\displaystyle n^2+1}\exp\Big({}-(M' - \int_{0}^{s} \g(\tau)d\tau )\left< n \right>\Big) \Phi\left(R X+\e\rho s\right) \\
		  & & \times \exp\left({}- \left<n\right> \int_{s'}^{s} \left(\g(\tau) - \g^{\sharp}(\tau) \right) d\tau \right)
	\end{eqnarray*}
	
	\noindent because $\Phi(R X + \e\rho s') \prec_{s',s} \Phi(R X + \e\rho s)$ for $s' \leq s < \und{s}$ by \eqref{t'.t}. This gives us exactly \eqref{precise.bound.action.U} using \eqref{def.g}, and then \eqref{bound.lemma.action.U}.
 
\end{proof}

\begin{remark}
	The estimate \eqref{bound.lemma.action.U} is not precise enough to show that $T$ is a contraction in $\EE$. The more precise estimate \eqref{precise.bound.action.U} is very important for the estimate \eqref{esti.reg_dtheta} below.
\end{remark}


\subsection{Norm of the free solution}


\begin{lemma}[Norm of the free solution]
	\label{lemma.norm.free.solution}
	The free solution $\f$ defined by \eqref{def.initial.data} satisfies
	\be
		\label{norm.free.solution}
		|||\f||| \lesssim \omega^{-(m-1)}\,e^{M'-M(\e)} .
	\ee
\end{lemma}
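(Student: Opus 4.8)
The plan is to use that the propagator acts diagonally in the $\theta$-Fourier variable (Lemma~\ref{lemma.growth.propa}), so that $\f=U(0,\cdot,\cdot,\cdot)\h_{\e}$ keeps the spectral support of $\h_{\e}$. With $\h_{\e}(x,\theta)=e^{-M(\e)}\big(e^{-i\theta}\vec{e}_{+}+e^{i\theta}\vec{e}_{-}\big)$ (which is real-valued since $\vec{e}_{-}=\overline{\vec{e}_{+}}$), we have $(\h_{\e})_{\pm1}=e^{-M(\e)}\vec{e}_{\mp}$, constant in $x$, and $(\h_{\e})_{n}=0$ for $|n|\neq1$; thus by \eqref{free.solution},
\[
	\f_{\pm1}(s,x)=e^{-M(\e)}\,U_{\pm1}(0,s,x)\,\vec{e}_{\mp},\qquad \f_{n}\equiv0\ \text{ for }|n|\neq1 .
\]
For $|n|\neq1$ the defining inequality \eqref{defi.EEs} holds with an arbitrary constant, so the whole estimate reduces to the modes $n=\pm1$.

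For those modes I would invoke the majorising bound on the propagator \eqref{esti.prec.propa} at $s'=0$, namely $U_{\pm1}(0,s,x)\prec_{s}\omega^{-(m-1)}\exp\big(\int_{0}^{s}\g^{\sharp}(\tau)\,d\tau\big)\Phi(R_{0}X)$, and combine it with $|\vec{e}_{\mp}|\lesssim1$ and the multiplicativity of $\prec$ (Lemma~\ref{majoring.properties}) to obtain
\[
	\f_{\pm1}(s,x)\ \prec_{s}\ C_{0}\,\omega^{-(m-1)}\,e^{-M(\e)}\,\exp\Big(\int_{0}^{s}\g^{\sharp}(\tau)\,d\tau\Big)\,\Phi(R_{0}X)
\]
for an absolute constant $C_{0}$. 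Since the parameter $R$ of $\EE$ is $\geq R_{0}$, one has $\Phi(R_{0}X)\prec_{s}\Phi(R_{0}X+\e\rho s)\prec_{s}\Phi(RX+\e\rho s)$ by parts (4) and (3) of Lemma~\ref{lemma.phi.prec.Phi}, exactly as in the proof of Lemma~\ref{lemma.holomorphic}. Matching this against the $\EE_{s}$-template of Definition~\ref{definition.EE_s} at $n=\pm1$, the resulting bound on $\|\f(s)\|_{s}$ is, up to a fixed constant, $\omega^{-(m-1)}e^{-M(\e)}\exp\big(\int_{0}^{s}\g^{\sharp}(\tau)\,d\tau+\langle1\rangle(M'-\int_{0}^{s}\g(\tau)\,d\tau)\big)$. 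Using $\g=\g^{\sharp}+\beta$ from \eqref{def.g} together with $\g^{\sharp}\geq0$, the exponent is non-increasing in $s$, hence maximal at $s=0$, where it equals $M'$; therefore $\|\f(s)\|_{s}\lesssim\omega^{-(m-1)}e^{M'-M(\e)}$ uniformly in $s\in[0,\und{s})$, and taking the supremum over $s$ gives $|||\f|||\lesssim\omega^{-(m-1)}e^{M'-M(\e)}$.

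The step deserving care is this last uniform-in-$s$ comparison: one must check that the time growth $\exp\big(\int_{0}^{s}\g^{\sharp}\big)$ inherited by $\f$ from the propagator is, for every $s<\und{s}$, dominated by the time relaxation $\exp\big(\int_{0}^{s}\g\big)$ built into the $\EE_{s}$-weight, so that the worst value of $s$ is $s=0$ and the factor $e^{M'}$ comes out of the weight there. This is precisely where the strict margin $\g=\g^{\sharp}+\beta>\g^{\sharp}$ is used. It is also the place where the $x$-dependence of $U_{n}$ — absent in \cite{metivier2005remarks}, where $U_{n}$ is constant in $x$ — must be absorbed through the majorising estimate \eqref{esti.prec.propa} rather than through a pointwise bound, so that the analyticity in $x$ of $\f$ is retained with radius $R^{-1}$.
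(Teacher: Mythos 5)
Your proof is correct and follows essentially the same route as the paper: reduce to the Fourier modes $n=\pm1$, majorize them via \eqref{esti.prec.propa} by $\omega^{-(m-1)}e^{-M(\e)}e^{\int_0^s\g^{\sharp}}\Phi(R_0X)$, and compare with the $\EE_s$ weight using $\g-\g^{\sharp}=\beta>0$ so that the supremum over $s\in[0,\und{s})$ is attained at $s=0$, producing the factor $e^{M'}$. Your extra remarks (the $\Phi(R_0X)\prec\Phi(RX+\e\rho s)$ absorption and the need for the majorizing rather than pointwise bound on $U_n$) only make explicit steps the paper leaves implicit.
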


\begin{proof}
	The Fourier decomposition of $\f_{\e}$ is given by $\f_{\e} = \f_{+1}e^{-i\theta} + \f_{-1}e^{i\theta}$ with $ \f_{\pm}(s,x) = U_{\mp}(0,s,x)\vec{e}_{\pm} $. The Fourier coefficients $\f_{\pm}$ satisfy thanks to \eqref{esti.prec.propa} the estimate
	\be
		\label{estimate.f.pm1} 
		\f_{\pm 1}(s) \prec_{s} \omega^{-(m-1)}\,e^{-M(\e)}e^{ \int_{0}^{s} \g^{\sharp}(\tau) d\tau } \Phi(R_0 X) .
	\ee
	
	\noindent Then by definition of $|||\cdot|||$ given by \eqref{defi.norm}, and by definition \eqref{def.g} of $\g$, there holds 
	\begin{eqnarray*}
		|||\f_{\pm 1}||| & = & \frac{\dsp 2}{\dsp c_0c_1} \omega^{-(m-1)}\,e^{M'-M(\e)} \max_{[0,\und{s})} e^{ \int_{0}^{s} \g^{\sharp}(\tau) d\tau}e^{- \int_{0}^{s} \g(\tau) d\tau} \\
			& = & \frac{\dsp 2}{\dsp c_0c_1} \omega^{-(m-1)}\,e^{M'-M(\e)} \max_{[0,\und{s})} e^{- \int_{0}^{s} \beta d\tau} \\
			& \lesssim & \omega^{-(m-1)}\,e^{M'-M(\e)}
	\end{eqnarray*}
	
	\noindent which ends the proof.
	
%
%
%
%
\end{proof}


\section{Regularization by integration in time and contraction estimates}
\label{section.contraction}


In this section we prove estimates in spaces $\EE$ for the three operators $T^{[\theta]}$, $T^{[x]}$ and $T^{[\u]}$ defined respectively by \eqref{def.T_theta}, \eqref{def.T_x} and \eqref{def.T_u}. Note that in the first two operators there appear derivation operators $\d_{\theta}$ and $\d_{x_j}$. As we will see in the next subsection, these are not bounded operators in $\EE$. But thanks to some smoothing effect of the time-integration, as used in \cite{metivier2005remarks}, we will show that operators $T^{[\theta]}$, $T^{[x]}$ and $T^{[\u]}$ are in fact bounded in $\EE$. We will follow in this section the work of \cite{ukai2001boltzmann}.


\subsection{Lack of boundedness of derivation operators}


In the following we make precise how the derivation operators $\d_{x_j}$ and $\d_{\theta}$ act on $\EE$.

\begin{lemma}[Estimates for the derivation operators]
	\label{lemma.derivation}
	For any $\u$ in $\EE$, we have the following estimates
	\begin{eqnarray}
		(\d_{\theta}\u)_n(s,x) & \prec_{s} & |n| \,|||\u||| \frac{\dsp c_1}{\dsp n^2+1} e^{{}-(M'- \int_{0}^{s}\g(\tau) d\tau) \left<n\right>} \Phi\left(R X+\e\rho s\right) \label{action.dtheta} \\
		(\d_{x_j}\u)_n(s,x) & \prec_{s} & \,R \,|||\u||| \frac{\dsp c_1}{\dsp n^2+1} e^{{}-(M'- \int_{0}^{s}\g(\tau) d\tau ) \left<n\right>} \Phi'\left(R X+\e\rho s\right) \label{action.dy}
	\end{eqnarray}
	
	\noindent for all $n\in\Z$ and $s\in[0,\und{s})$. 
	
\end{lemma}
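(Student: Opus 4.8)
The plan is to read both estimates directly off the defining majorization \eqref{esti.u} of elements of $\EE$, handling the two derivations separately: for $\d_{\theta}$ only the Fourier expansion in $\theta$ is needed, whereas for $\d_{x_j}$ I would invoke the compatibility of the majoring relation $\prec$ with formal differentiation, i.e. point $2$ of Lemma \ref{majoring.properties}, in the form given by point $2$ of Lemma \ref{lemma.phi.prec.Phi}.

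First, fix $\u\in\EE$ and $0\le s<\und{s}$, and write $\u(s,x,\theta)=\sum_{n\in\Z}\u_n(s,x)e^{in\theta}$. The $\theta$-derivative has Fourier coefficients $(\d_{\theta}\u)_n(s,x)=in\,\u_n(s,x)$, hence, coefficient by coefficient in $x$ and with $s$ as a parameter, $(\d_{\theta}\u)_n(s,x)\prec_{s}|n|\,\u_n(s,x)$. Combining this with \eqref{esti.u} yields \eqref{action.dtheta}: the weight $\langle n\rangle$ in the exponential is untouched, only the algebraic prefactor $\dsp\frac{c_1}{n^2+1}$ gets multiplied by $|n|$.

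Second, for the spatial derivative I would apply point $2$ of Lemma \ref{lemma.phi.prec.Phi} to the series $x\mapsto\u_n(s,x)$, with the parameter $t$ there taken to be $s$ and $\rho$ there replaced by $\e\rho$: from
$$\u_n(s,\cdot)\prec_{s}|||\u|||\,\frac{\dsp c_1}{\dsp n^2+1}\,e^{-(M'-\int_{0}^{s}\g(\tau)d\tau)\langle n\rangle}\,\Phi(RX+\e\rho s)$$
one gets
$$(\d_{x_j}\u)_n(s,\cdot)\prec_{s}|||\u|||\,\frac{\dsp c_1}{\dsp n^2+1}\,e^{-(M'-\int_{0}^{s}\g(\tau)d\tau)\langle n\rangle}\,R\,\Phi'(RX+\e\rho s),$$
since $\d_{X_j}\Phi(RX+\e\rho s)=R\,\Phi'(RX+\e\rho s)$; this is exactly \eqref{action.dy}. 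Equivalently one can argue at the level of coefficients, using $\d_{x_j}\u_n(s,x)=\sum_{k}(k_j+1)\u_{n,k+1_j}(s)x^{k}$ and bounding $|(k_j+1)\u_{n,k+1_j}(s)|$ by $(k_j+1)$ times the $(k+1_j)$-th coefficient of the template, which is the $k$-th coefficient of $\d_{X_j}\Phi(RX+\e\rho s)$.

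There is no genuine obstacle in this lemma; both bounds are immediate from the definitions. The point worth stressing — and the content of the lemma's title — is that the right-hand sides are strictly worse than the defining template \eqref{esti.u}: the factor $|n|$ cannot be absorbed into $\dsp\frac{c_1}{n^2+1}$ uniformly in $n$, and $\Phi'$, although it has the same radius of convergence $1$ as $\Phi$, is not dominated by $\Phi$ (its coefficients decay like $1/k$ rather than $1/k^2$). Thus neither $\d_{\theta}$ nor $\d_{x_j}$ is a bounded operator on $\EE_s$, which is precisely what forces the time-integration smoothing used in the estimates for $T^{[\theta]}$, $T^{[x]}$ and $T^{[\u]}$ in the rest of the section.
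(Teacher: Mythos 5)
Your proof is correct and follows the same route as the paper: $(\d_{\theta}\u)_n = in\,\u_n$ gives \eqref{action.dtheta} directly from \eqref{esti.u}, and \eqref{action.dy} follows from the compatibility of $\prec$ with formal differentiation via \eqref{dx.Phi}. Your closing remarks on why $|n|$ and $\Phi'$ cannot be absorbed also match the paper's subsequent remark on the lack of boundedness of the derivation operators.
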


\begin{proof}
	The estimates \eqref{action.dtheta} and \eqref{action.dy} are straightforward. Indeed $	(\d_{\theta}\u)_n = n \u_n $ for all $n\in\Z $ which implies \eqref{action.dtheta}. For \eqref{action.dy} there holds $	(\d_{x_j}\u)_n = \d_{x_j} \u_n $ for all $ n\in\Z $ and we get \eqref{action.dy} thanks to the relation \eqref{dx.Phi}.
	
\end{proof}

\begin{remark}[Lack of boundedness of derivation operators]
	Lemma {\rm \ref{lemma.derivation}} does not prove directly that the $\d_{x_j}$ and $\d_{\theta}$ are not bounded operators on $\EE$. But let us consider the function in $\EE$ defined by its Fourier modes
	$$ \u_n(s,x) = \frac{\dsp c_1}{\dsp n^2+1} e^{{}-(M'- \int_{0}^{s}\g(\tau) d\tau ) \left<n\right>} \Phi\left(R X+\e\rho s\right) \quad \forall n\in\Z $$
	\noindent Then
	$$ 
		\left(\d_{\theta}\u\right)_n(s,x) = \frac{\dsp c_1 n}{\dsp n^2+1} e^{{}-(M'- \int_{0}^{s}\g(\tau) d\tau ) \left<n\right>} \Phi\left(R X+\e\rho s\right) 
	$$
	
	\noindent and $\d_{\theta}\u$ is not in $\EE$ as we may not bound $ \frac{|n|}{n^2+1}$ by $ \frac{1}{n^2+1} $. Since $\Phi' \prec \Phi$ does not hold, the applications $\d_{x_j} \u$ are not in $\EE$ either. Hence the derivation operators $\d_{x_j}$ and $\d_{\theta}$ are not bounded operators in $\EE$.

\end{remark}

In the following, we will need exact estimates on terms like $\v\d_{\theta}\u$, or $U(s',s)\d_{x_j}\u(s')$.

\begin{lemma}[Action of product and $U(s',s)$ on the lack of boundedness]
	\label{lemma.product.lack}
	
	For any $\u$ and $\v$ in $\EE$, for all $n\in\Z$ and $0\leq s' \leq <\und{s}$, there holds
	\begin{align}
		& (\v\d_{\theta}\u)_n(s,x) \prec_{s} C|n| \,|||\u|||\, |||\v||| \frac{\dsp c_1}{\dsp n^2+1} e^{{}-(M'- \int_{0}^{s}\g(\tau) d\tau ) \left<n\right>} \Phi\left(R X+\e\rho s\right) \label{action.product.dtheta} \\
		& (\v\d_{x_j}\u)_n(s,x) \prec_{s} \,C'R\, |||\u|||\, |||\v||| \frac{\dsp c_1}{\dsp n^2+1} e^{{}-(M'- \int_{0}^{s}\g(\tau) d\tau ) \left<n\right>} \Phi'\left(R X+\e\rho s\right) \label{action.product.dy} \\
		& (U(s',s,x,\theta)\d_{x_j}\u(s',x,\theta))_n \prec_{s',s} C_n(s',s)R\, \omega^{-(m-1)}\,||\u(s')||_{s'} \frac{\dsp c_1}{\dsp n^2+1} e^{{}-(M'- \int_{0}^{s}\g(\tau) d\tau ) \left<n\right>} \Phi'\left(R X+\e\rho s'\right) \label{action.U.dy}
	\end{align}
	
	\noindent for some constants $C>0$ and $C'>0$ independent of all parameters.
\end{lemma}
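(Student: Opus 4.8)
I would prove the three bounds mode by mode in $\theta$, where multiplication by $\v$ becomes a convolution of the $\theta$-Fourier coefficients and $\d_\theta$ becomes multiplication by $in$. The ingredients are: the defining estimate \eqref{esti.u} for elements of $\EE$ and \eqref{defi.EEs} for elements of $\EE_{s'}$; the derivation bounds \eqref{action.dtheta}--\eqref{action.dy} (and \eqref{dx.Phi}); the relation $\Phi^2\prec\Phi$ of \eqref{Phi2} together with its formal derivative $2\Phi\Phi'\prec\Phi'$, which is legitimate because $\prec$ is compatible with derivation (Lemma \ref{majoring.properties}); the estimate \eqref{esti.prec.propa} for the Fourier modes of the propagator; the monotonicity \eqref{R_0.R}; and the scalar convolution inequality \eqref{definition.c_1}. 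Throughout I write $L(s)=M'-\int_0^s\g(\tau)\,d\tau\geq 0$ on $[0,\und{s})$, and I use $\langle p\rangle+\langle q\rangle\geq\langle p+q\rangle$.

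For \eqref{action.product.dtheta}: I would write $(\v\d_\theta\u)_n=\sum_{p+q=n}\v_p\,(\d_\theta\u)_q$, insert \eqref{esti.u} for $\v_p$ and \eqref{action.dtheta} for $(\d_\theta\u)_q$, multiply via compatibility of $\prec$ with products, and collapse the two factors $\Phi(RX+\e\rho s)$ into one by $\Phi^2\prec\Phi$, obtaining
\[
 (\v\d_\theta\u)_n \;\prec_s\; |||\u|||\,|||\v|||\,\Phi(RX+\e\rho s)\sum_{p+q=n} |q|\,\frac{c_1}{p^2+1}\,\frac{c_1}{q^2+1}\,e^{-L(s)(\langle p\rangle+\langle q\rangle)} .
\]
Since $L(s)\geq 0$, the exponential is bounded by $e^{-L(s)\langle n\rangle}$, so the claim reduces to $\sum_{p+q=n}|q|\frac{c_1}{p^2+1}\frac{c_1}{q^2+1}\leq C|n|\frac{c_1}{n^2+1}$. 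This is the one genuinely non-mechanical point, and I would handle it by splitting $|q|\leq|n|+|p|$: the $|n|$-part closes at once by \eqref{definition.c_1}, while for the $|p|$-part I would use $|p|\frac{c_1}{p^2+1}\leq c_1$ together with a dyadic dichotomy on whether $|p|\leq|n|/2$ (so that $\frac{c_1}{q^2+1}\lesssim\frac{c_1}{n^2+1}$) or $|p|>|n|/2$ (so that $|p|\frac{c_1}{p^2+1}\lesssim\frac{c_1}{|n|}$), each case contributing $\lesssim|n|\frac{c_1}{n^2+1}$. The constant thus produced depends only on $c_1$. The underlying fact is that the loss of exactly one $\theta$-derivative, i.e. the factor $|q|$ inside the convolution, is recovered by the single-mode weight $c_1/(n^2+1)$ up to the overall factor $|n|$.

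For \eqref{action.product.dy} the argument is lighter, since $\d_{x_j}$ costs only the fixed parameter $R$ (through \eqref{action.dy}) rather than a frequency: writing $(\v\d_{x_j}\u)_n=\sum_{p+q=n}\v_p\,(\d_{x_j}\u)_q$, multiplying \eqref{esti.u} and \eqref{action.dy}, collapsing the product $\Phi\cdot\Phi'$ into $\tfrac12\Phi'$ by $2\Phi\Phi'\prec\Phi'$, bounding the exponential by $e^{-L(s)\langle n\rangle}$ and summing with \eqref{definition.c_1}, one gets \eqref{action.product.dy} with $C'=\tfrac12$. For \eqref{action.U.dy}, since $\und{\A}$ is independent of $\theta$ one has $\bigl(U(s',s)\d_{x_j}\u(s')\bigr)_n=U_n(s',s)\,\d_{x_j}\u_n(s')$; I would combine \eqref{esti.prec.propa} for $U_n$ with the $\EE_{s'}$-bound \eqref{defi.EEs} and \eqref{dx.Phi} for $\d_{x_j}\u_n(s')$, use \eqref{R_0.R} to replace $\Phi(R_0X)$ by $\Phi(RX+\e\rho s')$ and then $2\Phi\Phi'\prec\Phi'$ to collapse $\Phi\cdot\Phi'$, and finally reorganize the exponent exactly as in the proof of Lemma \ref{lemma.action.U}: with $|n|\leq\langle n\rangle$ and $\g=\g^\sharp+\beta$ one gets $|n|\int_{s'}^{s}\g^\sharp-(M'-\int_0^{s'}\g)\langle n\rangle\leq -L(s)\langle n\rangle-\langle n\rangle\beta(s-s')$, which produces the prefactor $C_n(s',s)=e^{-\langle n\rangle\beta(s-s')}$ of \eqref{defi.Cn} and closes \eqref{action.U.dy}. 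The only real obstacle is the scalar convolution estimate in the $\d_\theta$ case; the two $\d_{x_j}$ bounds are straightforward variants of the computations already carried out in Lemmas \ref{lemma.algebra} and \ref{lemma.action.U}.
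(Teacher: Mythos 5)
Your proposal is correct and follows essentially the same route as the paper: reduce \eqref{action.product.dtheta}--\eqref{action.product.dy} to the computation of Lemma \ref{lemma.algebra} combined with the weighted convolution bound $\sum_{p+q=n}\frac{c_1}{p^2+1}\frac{c_1|q|}{q^2+1}\lesssim\frac{c_1|n|}{n^2+1}$ and with $2\Phi\Phi'\prec\Phi'$, and obtain \eqref{action.U.dy} by redoing the exponent bookkeeping of Lemma \ref{lemma.action.U} with \eqref{esti.prec.propa}, \eqref{R_0.R} and \eqref{dx.Phi}. The only place where you add detail the paper leaves implicit (``adapting the proof of Lemma \ref{lemma.c_1}'') is your splitting $|q|\le|n|+|p|$ with the dichotomy on $|p|$, which is a valid proof of that convolution inequality for $n\neq0$ (at $n=0$ the stated right-hand side vanishes, an edge case already present in the paper's own formulation and harmless in the subsequent use).
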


\begin{proof}

	To prove estimate \eqref{action.product.dtheta} it suffices to get back to the proof of Lemma \ref{lemma.algebra}. Following the same computations we get
	$$
		(\v\d_{\theta}\u)_n(s,x) \prec_{s} ||\u||_{s} \, ||\v||_{s} \Phi\left(R X+\e\rho s\right) \exp\Big({}-(M' - \int_{0}^{s}\g(\tau) d\tau )\left< n \right>\Big)  \sum_{p+q=n} \frac{\displaystyle c_1}{\displaystyle p^2+1}\frac{\displaystyle c_1 |q|}{\displaystyle q^2+1} .
	$$ 
	
	\noindent By adaptating the proof of the existence of some $c_1$ such that \eqref{definition.c_1} in Lemma \ref{lemma.c_1} there holds
	$$ 
		\sum_{p+q=n} \frac{\dsp c_1}{\dsp p^2+1} \frac{\dsp c_1|q|}{\dsp q^2+1} \lesssim \frac{\dsp c_1 |n|}{\dsp n^2+1} \quad , \quad \forall n\in\mathbb{Z} 
	$$
	
	\noindent and then \eqref{action.product.dtheta} holds.

	In the same way we have
	$$
		(\v\d_{x_j}\u)_n(s,x) \prec_{s} ||\v||_{s} \, ||\w||_{s} \frac{\displaystyle c_1}{\displaystyle p^2+1}\exp\Big({}-(M' - \int_{0}^{s}\g(\tau) d\tau )\left< n \right>\Big)\,R\Phi'\left(R X+\e\rho s\right) \Phi\left(R X+\e\rho s\right) 
	$$
	
	\noindent Thanks to Lemma \ref{majoring.properties}, we differentiate the inequality $\Phi^2 \prec \Phi$ to get $ 2\Phi \Phi' \prec \Phi' $, hence estimate \eqref{action.product.dy}.
	
	For estimate \eqref{action.U.dy} it suffices to adapt the proof of Lemma \ref{lemma.action.U}, as $U(s',s)$ acts only on the size of the Fourier coefficients $\u_n(s,x)$ and not on the coefficients of the series $\u_{n,k}(s)$.

\end{proof}


\subsection{Integration in time and regularization of $\d_{\theta}$}


\begin{prop}
	\label{reg_dtheta}
	For operator $T^{[\theta]}$ defined by \eqref{def.T_theta}, for any $\u\in B_{\EE}(0,1)$ there holds
	\be
		\label{esti.reg_dtheta}
		|||T^{[\theta]}(\u)||| \lesssim  \omega^{-(m-1)}\,\beta^{-1} |||({\mathbf A} - \und{{\mathbf A}})(\u)|||\,|||\u||| .
	\ee
\end{prop}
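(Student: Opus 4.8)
The plan is to exploit the smoothing effect of integration in time: the factor $|n|$ produced by $\d_{\theta}$ will be absorbed, at the cost of a single power of $\beta^{-1}$, by the exponential damping factor $C_n(s',s)=e^{-\left<n\right>\beta(s-s')}$ that appears in the precise propagator estimate \eqref{precise.bound.action.U}. Concretely, I would first set $\v:=(\A-\und{\A})(\u)$, which belongs to $\EE$ by Corollaries \ref{coro.norms} and \ref{coro.algebra}, and use \eqref{def.T_theta} to write the $n$-th Fourier coefficient of $T^{[\theta]}(\u)$ as $\int_{0}^{s}\big(U(s',s)(\v\,\d_{\theta}\u)(s')\big)_n\,ds'$. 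It then suffices to bound the integrand in the sense of $\prec$ and integrate in $s'$, which is licit since the majorant has nonnegative coefficients.

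At a fixed time $s'$, estimate \eqref{action.product.dtheta} of Lemma \ref{lemma.product.lack} bounds $(\v\,\d_{\theta}\u)_n(s',x)$ by $C|n|\,|||\v|||\,|||\u|||\,\frac{c_1}{n^2+1}\,e^{-(M'-\int_{0}^{s'}\g)\left<n\right>}\,\Phi(RX+\e\rho s')$. Multiplying by $U_n(s',s,x)$ via \eqref{esti.prec.propa}, using compatibility of $\prec$ with products, the reductions $\Phi(R_0X)\,\Phi(RX+\e\rho s')\prec\Phi(RX+\e\rho s')$ (from \eqref{R_0.R} and \eqref{Phi2}) and $\Phi(RX+\e\rho s')\prec_{s',s}\Phi(RX+\e\rho s)$ (from \eqref{t'.t}), and the inequalities $|n|\leq\left<n\right>$, $\g^{\sharp}\geq 0$ together with $\g=\g^{\sharp}+\beta$, I obtain exactly as in the proof of Lemma \ref{lemma.action.U}
$$\big(U(s',s)(\v\,\d_{\theta}\u)(s')\big)_n\prec_{s',s}C\,\omega^{-(m-1)}\,|n|\,e^{-\left<n\right>\beta(s-s')}\,|||\v|||\,|||\u|||\,\frac{c_1}{n^2+1}\,e^{-\left(M'-\int_{0}^{s}\g(\tau)\,d\tau\right)\left<n\right>}\,\Phi(RX+\e\rho s).$$

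To finish I would integrate this over $s'\in[0,s]$ and invoke the elementary identity $\int_{0}^{s}|n|\,e^{-\left<n\right>\beta(s-s')}\,ds'=\frac{|n|}{\left<n\right>\beta}\big(1-e^{-\left<n\right>\beta s}\big)\leq\beta^{-1}$, valid for all $n\in\Z$ (the integrand vanishing when $n=0$); since the remaining factors do not depend on $s'$, this yields
$$(T^{[\theta]}(\u))_n(s,x)\prec_{s}C\,\omega^{-(m-1)}\,\beta^{-1}\,|||\v|||\,|||\u|||\,\frac{c_1}{n^2+1}\,e^{-\left(M'-\int_{0}^{s}\g(\tau)\,d\tau\right)\left<n\right>}\,\Phi(RX+\e\rho s)$$
for every $n\in\Z$ and every $s\in[0,\und{s})$. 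This is precisely the defining inequality \eqref{defi.EEs} of $\EE_s$ with constant $\lesssim\omega^{-(m-1)}\,\beta^{-1}\,|||\v|||\,|||\u|||$ uniformly in $s$ (smoothness in $s$ being routine), so passing to the supremum over $s$ as in \eqref{defi.norm} and recalling $\v=(\A-\und{\A})(\u)$ gives \eqref{esti.reg_dtheta}.

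The step I expect to be the main obstacle is the one just above: one must retain the damping factor $C_n(s',s)=e^{-\left<n\right>\beta(s-s')}$ coming from \eqref{precise.bound.action.U} rather than resorting to the cruder bound \eqref{bound.lemma.action.U}, because it is exactly the integral of $|n|\,e^{-\left<n\right>\beta(s-s')}$ over $[0,s]$ that converts the unbounded operator $\d_{\theta}$ into a bounded one with the claimed loss $\beta^{-1}$.
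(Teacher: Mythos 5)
Your proof is correct and follows essentially the same route as the paper's: bound $\big((\A-\und{\A})(\u)\,\d_{\theta}\u\big)_n$ via \eqref{action.product.dtheta}, apply the propagator while retaining the precise damping factor $C_n(s',s)$ from \eqref{precise.bound.action.U} (rather than the crude bound \eqref{bound.lemma.action.U}), and absorb the factor $|n|$ through the time integral $\int_0^s |n|\,e^{-\langle n\rangle\beta(s-s')}\,ds'\leq\beta^{-1}$. The only cosmetic difference is that you justify $(\A-\und{\A})(\u)\in\EE$ via Corollaries \ref{coro.norms} and \ref{coro.algebra} where the paper invokes Lemma \ref{lemma.holomorphic}, which is immaterial.
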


\begin{proof}	

By Lemma \ref{lemma.holomorphic}, the function $(\A - \und{\A})(\cdot,\u)$ is in $\EE$. Applying first estimate \eqref{action.product.dtheta} we get
\begin{align*}
	& \left((\A - \und{\A})(s',\u(s')) \d_{\theta}\u(s')\right)_n \\
	& \prec_{s'} |n| \,|||\u|||\, |||({\mathbf A} - \und{{\mathbf A}})(\u)||| 
\frac{\dsp c_1}{\dsp n^2+1} e^{{}- (M' - \int_{0}^{s'}\g(\tau) d\tau) \left<n\right>} \Phi\left(R X+\e\rho s'\right) 
\end{align*}

\noindent where $\g$ is defined in \eqref{def.g}. Then by \eqref{action.U.dy} there holds
\begin{align*}
	& \left(U(s',s) (\A - \und{\A})(s',\u(s')) \d_{\theta}\u(s') \right)_n \\
	& \prec_{s',s} C_n(s',s) |n| \,\omega^{-(m-1)}\,|||\u|||\, |||({\mathbf A} - \und{{\mathbf A}})(\u)||| \frac{\dsp c_1}{\dsp n^2+1}e^{{}- (M' - \int_{0}^{s}\g(\tau) d\tau) \left<n\right>} \Phi\left(R X+\e\rho s\right) .
\end{align*}

\noindent As integration in time and Fourier transform commute, we have
$$
	\left(T^{[\theta]}(\u)\right)_n(s) = \int_{0}^{s} \left(U(s',s) (\A - \und{\A}) \d_{\theta}\u(s') \right)_n ds' 
$$

\noindent and then
\begin{align*}
	& \left(T^{[\theta]}(\u)\right)_n(s) \\
	& \prec_{s} \int_{0}^{s} C_n(s',s) |n| \,\omega^{-(m-1)}\,|||\u||| \, |||({\mathbf A} - \und{{\mathbf A}})(\u)||| \,\frac{\dsp c_1}{\dsp n^2+1} e^{{}- (M'- \int_{0}^{s}\g(\tau) d\tau ) \left<n\right>} \Phi\left(R X+\e\rho s\right) ds' \\
	& \prec_{s} \omega^{-(m-1)}\,|||\u|||\,  |||({\mathbf A} - \und{{\mathbf A}})(\u)||| \,\frac{\dsp c_1}{\dsp n^2+1}e^{{}- (M'- \int_{0}^{s}\g(\tau) d\tau ) \left<n\right>} \Phi\left(R X+\e\rho s\right) \int_{0}^{s} C_n(s',s) |n| \, ds' .
\end{align*}

\noindent To end the proof, we prove a uniform bound independent of $n$ for the integral term $ \int_{0}^{s} C_n(s',s) |n| \, ds'$. Recalling first the definition \eqref{defi.Cn}:
$$
	C_n(s',s) = \exp\left( - \beta\, (s-s') \left<n\right> \right)
$$

\noindent there holds
\begin{eqnarray*}
	\int_{0}^{s} C_n(s',s) |n| \, ds' & = & \int_{0}^{s} \exp\left( - \beta\, (s-s') \left<n\right> \right) |n| ds' \\
		& = & \exp\left( {} - \beta\, s\left<n\right> \right) \int_{0}^{s} \exp\left( \beta\, s' \left<n\right> \right) |n| ds' \\
		& \leq & \beta^{-1}
\end{eqnarray*}

\noindent which ends the proof.
	
\end{proof}

Thanks to the definition \eqref{def.undA} of $\und{A}$ and an expansion formula we make the previous result more precise:

\begin{coro}
	\label{reg_dtheta.bis}
	For operator $T^{[\theta]}$ defined by \eqref{def.T_theta}, for any $\u\in B_{\EE}(0,1)$ there holds
	\be
		\label{esti.reg_dtheta.bis}
		|||T^{[\theta]}(\u)||| \lesssim  \omega^{-(m-1)}\,\beta^{-1}\, \e \,|||\u|||^2 .
	\ee
\end{coro}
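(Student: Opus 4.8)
The plan is to combine Proposition~\ref{reg_dtheta} with a first-order Taylor expansion of $A$ in the $u$-variable around $u=\vec u_0=0$. By the definitions \eqref{def.mathbf.H} and \eqref{def.undA} we have $(\mathbf A-\underline{\mathbf A})(s,x,\u)=A(\e s,x,\e\u)-A(\e s,x,0)$, so Taylor's formula with integral remainder gives
$$(\mathbf A-\underline{\mathbf A})(s,x,\u)=\e\sum_{j=1}^{N}\u_j\int_0^1 A_{u_j}(\e s,x,\vartheta\e\u)\,d\vartheta=\e\sum_{j=1}^{N}\u_j\int_0^1 \mathbf A_{u_j}(\vartheta\u)\,d\vartheta,$$
where $A_{u_j}=\d_{u_j}A$ and the last equality uses notation \eqref{def.mathbf.H} once more. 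First I would record that this is exactly the "expansion formula" alluded to in the statement, and that it already exhibits the gain of one power of $\e$.

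Next I would estimate the $\EE$-norm of the right-hand side. For $\u\in B_{\EE}(0,1)$ and $\vartheta\in[0,1]$ one has $|||\vartheta\u|||=\vartheta|||\u|||<1$, hence $\vartheta\u\in B_{\EE}(0,1)$; then Corollary~\ref{coro.norms} applied to $H=A_{u_j}$ gives $|||\mathbf A_{u_j}(\vartheta\u)|||\lesssim 1$, uniformly in $\vartheta$ and in small $\e$. Since $(\EE,|||\cdot|||)$ is a Banach space the norm passes under the integral, and since it is a Banach algebra (Corollary~\ref{coro.algebra}) with $|||\u_j|||\le|||\u|||$ for each component, I obtain
$$|||(\mathbf A-\underline{\mathbf A})(\u)|||\le\e\sum_{j=1}^{N}|||\u_j|||\,\sup_{\vartheta\in[0,1]}|||\mathbf A_{u_j}(\vartheta\u)|||\lesssim\e\,|||\u|||,$$
the implicit constant absorbing the fixed integer $N$. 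Inserting this into \eqref{esti.reg_dtheta} of Proposition~\ref{reg_dtheta} then yields $|||T^{[\theta]}(\u)|||\lesssim\omega^{-(m-1)}\beta^{-1}|||(\mathbf A-\underline{\mathbf A})(\u)|||\,|||\u|||\lesssim\omega^{-(m-1)}\beta^{-1}\e\,|||\u|||^2$, which is \eqref{esti.reg_dtheta.bis}.

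The only mildly delicate point will be the justification that $\vartheta\mapsto\mathbf A_{u_j}(\vartheta\u)$ is an $\EE$-valued map to which Corollary~\ref{coro.norms} applies with a bound uniform in $\vartheta$, and that its integral over $[0,1]$ again lies in $\EE$ with the expected norm. This is handled exactly as in the proof of Lemma~\ref{lemma.holomorphic}: the algebra structure of $\EE$ controls the Neumann-type series $\prod_j(1-\e a_H\u_j)^{-1}$ appearing after Lemma~\ref{lemma.H}, and the majorant $\Phi(R_H X+\e\rho_H s)$ is reabsorbed into $\Phi(RX+\e\rho s)$ via \eqref{R_0.R} and \eqref{Phi2}. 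Everything else in the argument is bookkeeping, so I expect no real obstruction.
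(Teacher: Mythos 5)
Your proposal is correct and follows essentially the same route as the paper: the paper also factors $A(t,x,u)-\und{A}(t,x)=\sum_j A_{u_j}(t,x,u)\,u_j$ with analytic matrices $A_{u_j}$ (your Taylor formula with integral remainder is just the explicit construction of these), extracts the factor $\e$ from the notation \eqref{def.mathbf.H}, bounds the $\A_{u_j}$ factors on $B_{\EE}(0,1)$ via Corollary \ref{coro.norms}, and inserts the resulting bound $|||(\A-\und{\A})(\u)|||\lesssim\e\,|||\u|||$ into Proposition \ref{reg_dtheta}. Your write-up is merely more detailed about the uniformity in $\vartheta$ and the algebra structure, which the paper leaves implicit.
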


\begin{proof}
	By analyticity of $A(t,x,u)$ there are a family of matrices $A_{u_j}(t,x,u)$ depending analytically on $(t,x,u)$ such that
	$$
		A(t,x,u) - \und{A}(t,x) = \sum_j A_{u_j} u_j.
	$$
	
	\noindent This implies that
	$$
		|||({\mathbf A} - \und{{\mathbf A}})(\u)||| \leq \e |||\u|||
	$$
	
	\noindent by definition of notation \eqref{def.mathbf.H}.
\end{proof}


\subsection{Integration in time and regularization of $\d_{x_j}$}


After managing to deal with unbounded term $\d_{\theta}\u$ we consider the other unbounded terms $\d_{x_j}\u$. We consider then the operator $T^{[x]}$:

\begin{prop}
	\label{reg_dx}
	For operator $T^{[x]}$ defined by \eqref{def.T_x} and any $\u\in B_{\EE}(0,1)$, there holds
	\be
		\label{esti.reg_dx}
		|||T^{[x]}(\u)||| \lesssim \omega^{-(m-1)}\,R\rho^{-1}\,|||\u||| .
	\ee
\end{prop}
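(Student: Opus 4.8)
The strategy mirrors that of Proposition \ref{reg_dtheta} for $T^{[\theta]}$, but now using the $\d_{x_j}$-regularization estimate \eqref{action.U.dy} and the key cancellation $2\Phi\Phi' \prec \Phi'$. First I would fix $\u \in B_{\EE}(0,1)$ and write, for each Fourier mode $n\in\Z$ and $0\le s'\le s<\und{s}$,
\[
	\left( U(s',s)\, \e\A_j(s',\u(s'))\, \d_{x_j}\u(s') \right)_n .
\]
By Corollary \ref{coro.norms}, the function $\A_j(\cdot,\u)$ lies in $\EE$ with $|||\A_j(\u)|||\lesssim 1$, uniformly for $\u$ in the unit ball, provided $R\ge R_0$ and $\rho\ge\rho_0$. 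Applying the product-with-derivative estimate \eqref{action.product.dy} to $\v=\e\A_j(\u)$ gives
\[
	\left( \e\A_j(s',\u(s'))\,\d_{x_j}\u(s') \right)_n \prec_{s'} C'\, \e R\, |||\u|||\, \frac{c_1}{n^2+1}\, e^{-(M'-\int_0^{s'}\g(\tau)d\tau)\langle n\rangle}\,\Phi'\!\left(RX+\e\rho s'\right),
\]
where I have used $|||\A_j(\u)|||\lesssim 1$. Then applying the action of the propagator exactly as in the proof of estimate \eqref{action.U.dy} (the propagator only multiplies the size of the $n$-th Fourier coefficient by $C_n(s',s)\,\omega^{-(m-1)}$ and shifts $\Phi'(RX+\e\rho s')$ up to $\Phi'(RX+\e\rho s)$ via \eqref{t'.t}), I obtain
\[
	\left( U(s',s)\,\e\A_j(s',\u(s'))\,\d_{x_j}\u(s')\right)_n \prec_{s',s} C_n(s',s)\,\e R\,\omega^{-(m-1)}\,|||\u|||\,\frac{c_1}{n^2+1}\,e^{-(M'-\int_0^{s}\g(\tau)d\tau)\langle n\rangle}\,\Phi'\!\left(RX+\e\rho s\right),
\]
up to a universal constant.

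The next step is to integrate in $s'$ over $[0,s]$ and sum over $j$. Since integration in time commutes with the Fourier transform in $\theta$, $(T^{[x]}(\u))_n(s)=\sum_j\int_0^s(U(s',s)\,\e\A_j\,\d_{x_j}\u(s'))_n\,ds'$. The factor $C_n(s',s)=e^{-\beta(s-s')\langle n\rangle}$ is the only $s'$-dependence outside the majoring bound once $\Phi'(RX+\e\rho s')$ has been replaced by $\Phi'(RX+\e\rho s)$; however, to exploit the regularization of $\d_{x_j}$ I should instead keep $\Phi'(RX+\e\rho s')$ and integrate it directly, using $\int_0^s \e\rho\,\Phi'(RX+\e\rho s')\,ds' = \Phi(RX+\e\rho s)-\Phi(RX) \prec_s \Phi(RX+\e\rho s)$. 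Concretely, bounding $C_n(s',s)\le 1$, the contribution is controlled by
\[
	\e R\,\omega^{-(m-1)}\,|||\u|||\,\frac{c_1}{n^2+1}\,e^{-(M'-\int_0^s\g(\tau)d\tau)\langle n\rangle}\int_0^s \Phi'\!\left(RX+\e\rho s'\right)ds',
\]
and since $\int_0^s\Phi'(RX+\e\rho s')\,ds'=(\e\rho)^{-1}\big(\Phi(RX+\e\rho s)-\Phi(RX)\big)\prec_s(\e\rho)^{-1}\Phi(RX+\e\rho s)$, the $\e$'s cancel and I am left with a factor $R\rho^{-1}$ times $\Phi(RX+\e\rho s)$. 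This gives exactly
\[
	\left(T^{[x]}(\u)\right)_n(s) \prec_s \text{(const)}\,\omega^{-(m-1)}\,R\rho^{-1}\,|||\u|||\,\frac{c_1}{n^2+1}\,e^{-(M'-\int_0^s\g(\tau)d\tau)\langle n\rangle}\,\Phi\!\left(RX+\e\rho s\right),
\]
which, by the definitions \eqref{defi.EEs} and \eqref{defi.norm} of the norm on $\EE$, is precisely the bound \eqref{esti.reg_dx}.

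**Main obstacle.** The delicate point is the order of operations in handling the $s'$-integral: one must integrate $\Phi'(RX+\e\rho s')$ against $ds'$ \emph{before} majorizing it by $\Phi'(RX+\e\rho s)$, since it is exactly this time-integration that converts $\Phi'$ (which is not $\prec\Phi$) back into $\Phi$ and produces the gain of $\e\rho$ in the denominator — this is the ``regularization by integration in time'' phenomenon, and it is why the $\e$ from $\e\A_j\d_{x_j}$ is absorbed, leaving the clean cost $R\rho^{-1}$. A secondary technical care is that the bound $|||\A_j(\u)|||\lesssim 1$ from Corollary \ref{coro.norms} requires $R\ge R_0$ and $\rho\ge\rho_0$ and $\e$ small, so the statement is implicitly under these conditions, consistent with the hypotheses used throughout Section \ref{section.contraction}; the factor $C_n(s',s)\le 1$ is harmless here (unlike in $T^{[\theta]}$, where it was needed to absorb the extra $|n|$, here $\d_{x_j}$ produces no factor of $|n|$, so no further summation lemma is required).
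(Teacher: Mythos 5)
Your proof is correct and follows essentially the same route as the paper's: apply \eqref{action.product.dy} together with $|||\A_j(\u)|||\lesssim 1$, then the propagator bound \eqref{action.U.dy} with $C_n(s',s)\le 1$, keeping $\Phi'\left(RX+\e\rho s'\right)$ inside the time integral so that $\int_0^s\Phi'\left(RX+\e\rho s'\right)ds' \prec_{s} (\e\rho)^{-1}\Phi\left(RX+\e\rho s\right)$, which produces the factor $R\rho^{-1}$. Your self-correction — integrating $\Phi'$ in $s'$ \emph{before} majorizing its argument — is precisely the term-wise integration step in the paper's proof, so there is no gap.
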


\begin{proof}

By Lemma \ref{lemma.holomorphic}, functions ${\mathbf A}_j(\cdot,\cdot, \u(\cdot))$ are in $\EE$. Applying first estimate \eqref{action.product.dy} we get
\begin{align*}
	& \left(\A_j(s',\u(s')) \d_{x_j}\u(s')\right)_n \\
	& \prec_{s'} R \,|||\u|||\, |||\A_j(\u)||| 
\frac{\dsp c_1}{\dsp n^2+1} e^{{}- (M'- \int_{0}^{s'}\g(\tau)d\tau ) \left<n\right>} \Phi'\left(R X+\e\rho s'\right) 
\end{align*}

\noindent where we denote $|||\A_j(\u)|||$ for $|||\A_j(\cdot,\cdot, \u(\cdot))|||$. Then by Lemma \ref{lemma.action.U} there holds
\begin{align*}
	& \left(\sum_j U(s',s) \A_j(s',\u(s')) \d_{x_j}\u(s') \right)_n \\
	& \prec_{s',s} C_n(s',s) R \,\omega^{-(m-1)}\,|||\u|||\, \sum_j|||\A_j(\u)||| \frac{\dsp c_1}{\dsp n^2+1}e^{{}- (M'- \int_{0}^{s}\g(\tau)d\tau ) \left<n\right>} \Phi'\left(R X+\e\rho s'\right) \\
	& \prec_{s',s} R \,\omega^{-(m-1)}\,|||\u|||\, \sum_j|||\A_j(\u)||| \frac{\dsp c_1}{\dsp n^2+1}e^{{}- (M'- \int_{0}^{s}\g(\tau)d\tau ) \left<n\right>} \Phi'\left(R X+\e\rho s'\right)
\end{align*}

\noindent as $C_n(s',s) \leq 1$. As integration in time and Fourier transform commute, we have
$$
	\left(T^{[x]}(\u)\right)_n(s) = \int_{0}^{s} \left(U(s',s) \e \sum_j\A_j(s', \u(s')) \d_{x_j}\u(s')\right)_n ds' 
$$

\noindent and then
\begin{align*}
	& \left(T^{[x]}(\u)\right)_n \\
	& \prec_{s} \int_{0}^{s} \e R \,\omega^{-(m-1)}\,|||\u|||\, \sum_j|||\A_j(\u)||| \frac{\dsp c_1}{\dsp n^2+1}e^{{}- (M'- \int_{0}^{s}\g(\tau)d\tau ) \left<n\right>} \Phi'\left(R X+\e\rho s'\right) ds' \\
	& \prec_{s} \e R\omega^{-(m-1)}\,|||\u|||\, \sum_j|||\A_j(\u)||| \frac{\dsp c_1}{\dsp n^2+1}e^{{}- (M'- \int_{0}^{s}\g(\tau)d\tau ) \left<n\right>} \int_{0}^{s} \Phi'\left(R X+\e\rho s'\right) \, ds' .
\end{align*}

\noindent By term-wise integration of the series, we have
\begin{eqnarray*}
	\int_{0}^{s} \Phi'\left(RX+\e\rho s'\right) ds' & = & \int_{0}^{s} (\e\rho)^{-1} \d_{s'} \left( \Phi\left(R X+\e\rho s'\right) \right)ds' \\
	  & \prec_{s} & (\e\rho)^{-1} \Phi\left(R X+\e\rho s\right)
\end{eqnarray*}
\noindent which suffices to end the proof.

\end{proof}


\subsection{Integration in time and product}


As $\EE$ is an algebra the operator $T^{[\u]}$ is directly bounded, with no need of a regularization by time result, on the contrary of operators $T^{[\theta]}$ and $T^{[x]}$. The following proposition gives us precisely

\begin{prop}
	\label{reg_u}
	For the operator $T^{[u]}$ defined by \eqref{def.T_u}, for any $\u\in B_{\EE}(0,1)$ there holds
	\be
		\label{esti.reg_u}
		|||T^{[\u]}(\u)||| \lesssim \omega^{-(m-1)}\,\beta^{-1}\, \e \, |||\F(\u)||| \, |||\u||| .
	\ee
\end{prop}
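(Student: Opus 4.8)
The plan is to read the estimate for $T^{[\u]}$ off directly from the Banach algebra structure of $\EE$ together with the propagator bound of Lemma \ref{lemma.action.U}: unlike $T^{[\theta]}$ and $T^{[x]}$, the operator $T^{[\u]}$ contains no derivative, so no regularization of an unbounded operator is needed, and the only quantity to extract is the gain $\beta^{-1}$ produced by integrating in time against the decaying weights $C_n(s',s)$.

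First I would note that for $\u\in B_{\EE}(0,1)$ the function $\F(s,x,\u)$ lies in $\EE$ by Lemma \ref{lemma.holomorphic} (or Corollary \ref{coro.norms}), and since $\EE$ is a Banach algebra (Corollary \ref{coro.algebra}) so does the product $\F(\u)\,\u$, with $|||\F(\u)\,\u|||\le|||\F(\u)|||\,|||\u|||$. Carrying out the algebra estimate at the level of the weighted majoring bounds of Definition \ref{definition.EE_s}, exactly as in the proof of Lemma \ref{lemma.algebra}, gives for every $n\in\Z$ and $0\le s'<\und{s}$
\[
	\big(\F(s',\u(s'))\,\u(s')\big)_n(x) \prec_{s'} |||\F(\u)|||\,|||\u|||\,\frac{c_1}{n^2+1}\,e^{-\big(M'-\int_0^{s'}\g(\tau)\,d\tau\big)\left<n\right>}\,\Phi\big(RX+\e\rho s'\big).
\]
Next I would apply the sharp bound \eqref{precise.bound.action.U} of Lemma \ref{lemma.action.U} to the function $\e\,\F(\u)\,\u\in\EE$, obtaining
\[
	\big(U(s',s)\,\e\,\F(s',\u(s'))\,\u(s')\big)_n \prec_{s',s} C_n(s',s)\,\omega^{-(m-1)}\,\e\,|||\F(\u)|||\,|||\u|||\,\frac{c_1}{n^2+1}\,e^{-\big(M'-\int_0^{s}\g(\tau)\,d\tau\big)\left<n\right>}\,\Phi\big(RX+\e\rho s\big),
\]
with $C_n(s',s)=e^{-\left<n\right>\beta(s-s')}$ as in \eqref{defi.Cn}. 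Since integration in $s'$ commutes with the Fourier transform in $\theta$, one has $\big(T^{[\u]}(\u)\big)_n(s)=\int_0^s\big(U(s',s)\,\e\,\F(s',\u(s'))\,\u(s')\big)_n\,ds'$; pulling the $s$-independent factors out of the integral reduces everything to the elementary bound
\[
	\int_0^s C_n(s',s)\,ds' = e^{-\beta s\left<n\right>}\int_0^s e^{\beta s'\left<n\right>}\,ds' \le \frac{1}{\beta\left<n\right>} \le \beta^{-1},
\]
uniform in $n\in\Z$ and $s\in[0,\und{s})$, exactly as in the proof of Proposition \ref{reg_dtheta}. Combining these, $\big(T^{[\u]}(\u)\big)_n(s)\prec_s\omega^{-(m-1)}\beta^{-1}\e\,|||\F(\u)|||\,|||\u|||\,\frac{c_1}{n^2+1}\,e^{-(M'-\int_0^s\g(\tau)\,d\tau)\left<n\right>}\,\Phi(RX+\e\rho s)$ for all $n$, which by the definition \eqref{defi.norm} of $|||\cdot|||$ is precisely \eqref{esti.reg_u}.

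I do not expect a genuine obstacle here: $T^{[\u]}$ is the mildest of the three operators precisely because it involves no derivative and $\EE$ is already an algebra. The only points requiring (routine) care are bookkeeping — running the algebra estimate with the exact exponential-in-$n$ weights and the $\Phi(RX+\e\rho s')$ factor as in Lemma \ref{lemma.algebra}, and invoking the precise propagator bound \eqref{precise.bound.action.U} with its factor $C_n(s',s)$ rather than the cruder \eqref{bound.lemma.action.U}, since it is the decay of $C_n$ that furnishes the integrable $\beta^{-1}$ gain.
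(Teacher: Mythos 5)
Your proof is correct and follows essentially the same route as the paper: algebra estimate for $\F(\u)\,\u$ via Lemma \ref{lemma.holomorphic} and Lemma \ref{lemma.algebra}, the precise propagator bound \eqref{precise.bound.action.U} with its factor $C_n(s',s)$, and the elementary time integral $\int_0^s C_n(s',s)\,ds'\le\beta^{-1}$ (the paper carries a spurious factor $|n|$ inherited from the $\d_\theta$ case, which changes nothing). No gaps.
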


\begin{proof}

As in the proof of Proposition \ref{reg_dtheta} we have
\begin{align*}
	& \left(T^{[\u]}(\u)\right)_n(s) \\
	& \prec_{s} \int_{0}^{s} C_n^{\eta}(s',s) \,\omega^{-(m-1)}\,|||\u|||\, \e|||\F(\u)||| \frac{\dsp c_1}{\dsp n^2+1}e^{{}- (M'- \int_{0}^{s} \g(\tau)d\tau ) \left<n\right>} \Phi\left(R X+\e\rho s\right) ds' \\
	& \prec_{s} \e \omega^{-(m-1)}\,|||\u|||\, |||\F(\u)||| \frac{\dsp c_1}{\dsp n^2+1}e^{{}- (M'- \int_{0}^{s} \g(\tau)d\tau ) \left<n\right>} \Phi\left(R X+\e\rho s\right) \int_{0}^{s} C_n(s',s) |n| \, ds'
\end{align*}

\noindent and as 
$$
	\int_{0}^{s} C_n(s',s) |n| \, ds' \lesssim \beta^{-1} \quad , \quad \forall n\in\Z ,\; \forall 0 \leq s <\und{s}
$$

\noindent we get \eqref{esti.reg_u}.

\end{proof}

Using Assumption \ref{hypo.4}, we have in fact a more precise estimate:

\begin{coro}
	\label{coro.ref.du}
	Under Assumption {\rm \ref{hypo.4}}, operator $T^{[\u]}$ defined by \eqref{def.T_u} satisfied for any $\u\in B_{\EE}(0,1)$ the following bound
	\be
		\label{esti.reg_u.bis}
		|||T^{[\u]}(\u)||| \lesssim \omega^{-(m-1)}\, \beta^{-1} \e \, |||\u|||^2 .
	\ee
	
\end{coro}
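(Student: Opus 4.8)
The plan is to deduce the estimate from Proposition~\ref{reg_u} by a single structural observation, in exactly the way Corollary~\ref{reg_dtheta.bis} is deduced from Proposition~\ref{reg_dtheta}. Proposition~\ref{reg_u} already gives, for $\u\in B_{\EE}(0,1)$,
$$ |||T^{[\u]}(\u)||| \lesssim \omega^{-(m-1)}\,\beta^{-1}\,\e\,|||\F(\u)|||\,|||\u||| , $$
so the whole matter is to improve the factor $|||\F(\u)|||$ — which is only $O(1)$ in general, by Corollary~\ref{coro.norms} — to $O(|||\u|||)$ under Assumption~\ref{hypo.4}.

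First I would record that, under Assumption~\ref{hypo.4}, the matrix $F$ of the prepared equation \eqref{Cauchy.bis} satisfies $F(t,x,0)=0$. This follows from the preparation performed in Section~\ref{section.ansatz}: since $f$ is, by Assumption~\ref{hypo.4}, quadratic in $u$ near $u=\vec{u}_0=0$, the choice $h_1\equiv 0$ yields the Cauchy--Kovalevskaya solution $u_1\equiv 0$, hence in \eqref{Cauchy.bis} one has $F(t,x,u)\,u=f(t,x,u)$, and differentiating at $u=0$ together with \eqref{hypo.f} gives $F(t,x,0)=\d_u f(t,x,0)=0$. By analyticity of $F$ one may then write $F(t,x,u)=\sum_j F_{u_j}(t,x,u)\,u_j$ with each $F_{u_j}$ analytic near $(0,0,0)$, so that, in the notation \eqref{def.mathbf.H},
$$ \F(s,x,\u)=\e\sum_j \mathbf{F}_{u_j}(s,x,\u)\,\u_j . $$

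It remains to estimate the right-hand side in $\EE$. Each $\mathbf{F}_{u_j}(\u)$ obeys $|||\mathbf{F}_{u_j}(\u)|||\lesssim 1$ for $\u\in B_{\EE}(0,1)$ by Lemma~\ref{lemma.holomorphic} (as in Corollary~\ref{coro.norms}), and since $(\EE,|||\cdot|||)$ is a Banach algebra (Corollary~\ref{coro.algebra}) this gives $|||\F(\u)|||\lesssim \e\,|||\u|||$. Substituting into the bound of Proposition~\ref{reg_u} yields
$$ |||T^{[\u]}(\u)||| \lesssim \omega^{-(m-1)}\,\beta^{-1}\,\e^2\,|||\u|||^2 \leq \omega^{-(m-1)}\,\beta^{-1}\,\e\,|||\u|||^2 , $$
which is \eqref{esti.reg_u.bis}, in fact with a power of $\e$ to spare.

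The one point deserving care is the identification $F(t,x,0)=0$: one must make sure the reduction of Section~\ref{section.ansatz} (subtraction of an analytic reference solution) is compatible with $u_1\equiv 0$, so that Assumption~\ref{hypo.4} truly forces $F$ to vanish at $u=0$ rather than merely to be small near $(0,x_0)$ — were $F(t,x,0)$ only small, one would pick up an extra contribution to $|||T^{[\u]}(\u)|||$ that is linear, not quadratic, in $|||\u|||$. Everything else is a direct application of results already established.
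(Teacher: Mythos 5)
Your proof is correct and follows exactly the route the paper intends (and leaves implicit), namely the analogue of Corollary \ref{reg_dtheta.bis}: under Assumption \ref{hypo.4} and the preparation with $u_1\equiv 0$, the matrix $F$ vanishes at $u=0$, so $\F(\u)$ factors as $\e\sum_j \mathbf{F}_{u_j}(\u)\,\u_j$, and Lemma \ref{lemma.holomorphic} together with the algebra property and Proposition \ref{reg_u} gives the bound (in fact with $\e^2$ in place of $\e$, which is stronger than \eqref{esti.reg_u.bis}). Your closing caveat about needing $F(t,x,0)\equiv 0$ rather than merely small is exactly the right point of care, and is settled by reading Assumption \ref{hypo.4} as ``$f$ quadratic in $u$'' so that $h_1\equiv 0$ yields $u_1\equiv 0$.
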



\subsection{Contraction estimates}


The three previous subsections give us some precious estimates on operators $T^{[\theta]}$, $T^{[x]}$ and $T^{[\u]}$ in $\EE$. In the perspective of using a fixed point theorem on the Banach space $\EE$, we prove now estimates on the differences $T^{[\theta]}(\u) - T^{[\theta]}(\v)$, $T^{[x]}(\u) - T^{[x]}(\v)$ and $T^{[\u]}(\u) - T^{[\u]}(\v)$ for $\u$ and $\v$ in the ball $B_{\EE}(0,1)$.

\begin{prop}[Contraction estimates in $\EE$]
	\label{prop.estimates}
	There are $R_0$, $\rho_0>0$ such that for all $R\geq R_0$, $\rho>\rho_0$ and $\e \in (0,1)$, we get the following estimates for all $\u$ and $\v$ in $B_{\EE}(0,1)$:
	\begin{align}
		\label{esti.T}
		& |||T(\u)||| \lesssim \omega^{-(m-1)}\,\left( \beta^{-1} \left( \e|||\F(\u)||| + |||\mathbf{A}(\u) - \und{\A}(\u)||| \right)  + R\rho^{-1}\right) |||\u||| \\
		\label{esti.TT}
		& |||T(\u) - T(\v)||| \lesssim \omega^{-(m-1)}\,\left( \beta^{-1} \left( \e |||\F(\u)||| + |||\mathbf{A}(\u) - \und{\A}(\u)||| \right)  + R\rho^{-1}\right)  |||\u - \v|||
	\end{align}
\end{prop}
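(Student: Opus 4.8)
The plan is to prove both estimates simultaneously by combining the three bounds already obtained for the pieces $T^{[\theta]}$, $T^{[x]}$ and $T^{[\u]}$, namely Corollary \ref{reg_dtheta.bis} (or Proposition \ref{reg_dtheta}), Proposition \ref{reg_dx}, and Proposition \ref{reg_u} (or Corollary \ref{coro.ref.du}), together with the algebra structure of $\EE$ (Corollary \ref{coro.algebra}) and the uniform bounds on holomorphic substitutions (Corollary \ref{coro.norms}). The first estimate \eqref{esti.T} is essentially immediate: writing $T = T^{[\theta]} + T^{[x]} + T^{[\u]}$ as in \eqref{defi.T.decoupage} and adding the three bounds \eqref{esti.reg_dtheta}, \eqref{esti.reg_dx}, \eqref{esti.reg_u} gives exactly the right-hand side of \eqref{esti.T}, using that $R \geq R_0$ and $\rho \geq \rho_0$ are as in Corollary \ref{coro.norms} so that all the coefficient norms $|||\F(\u)|||$, $|||\mathbf{A}(\u) - \und{\A}(\u)|||$ are controlled.

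**The contraction estimate.**
For \eqref{esti.TT} the point is that $T$ is not linear in $\u$, because the coefficients $\F$, $\A_j$, $\A - \und{\A}$ are themselves evaluated at $\u$. So I would first decompose each difference $T^{[\bullet]}(\u) - T^{[\bullet]}(\v)$ into two contributions: one where the coefficient is evaluated at $\u$ but the derivative (or factor) acts on $\u - \v$, and one where the derivative (or factor) acts on $\v$ but the coefficient is replaced by the difference $\A_j(\u) - \A_j(\v)$, etc. For the first type of term the estimates of Sections \ref{section.contraction} apply verbatim with $\u$ replaced by $\u - \v$, yielding the bound on the right-hand side of \eqref{esti.TT}. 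For the second type, one uses that holomorphic functions act in a Lipschitz way on $B_{\EE}(0,1)$: since $H(t,x,u) - H(t,x,v) = \sum_j H_{u_j}(t,x,\cdot)(u_j - v_j)$ with $H_{u_j}$ again analytic, Lemma \ref{lemma.H} and Lemma \ref{lemma.holomorphic} (applied to the $H_{u_j}$) give $|||\H(\u) - \H(\v)||| \lesssim \e |||\u - \v|||$, together with $|||\v||| \leq 1$. Multiplying through, these contributions are again bounded by the right-hand side of \eqref{esti.TT}, with the extra factor $\e$ harmless.

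**Assembling.**
I would then sum the six resulting contributions (two for each of the three operators), absorb the numerical constants into $\lesssim$, and factor out the common prefactor $\omega^{-(m-1)}\left( \beta^{-1}(\e|||\F(\u)||| + |||\mathbf{A}(\u) - \und{\A}(\u)|||) + R\rho^{-1}\right)$, observing that in every term it is this quantity (or something smaller, e.g.\ with an extra $\e$) that appears. The algebra property \eqref{defi.norm} and Corollary \ref{coro.algebra} are used throughout to pass from products of Fourier-coefficient bounds to bounds on $|||\cdot|||$, exactly as in the proofs of Propositions \ref{reg_dtheta}, \ref{reg_dx}, \ref{reg_u}.

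**Main obstacle.**
The only genuinely delicate point is the Lipschitz estimate for the coefficient substitutions $\A_j(\u) - \A_j(\v)$ in $\EE$: one must check that expanding $\A_j(\u) - \A_j(\v)$ via the integral form of Taylor's formula, or via the one-term expansion in terms of the analytic matrices $A_{j,u_k}$, stays within the class of functions handled by Lemma \ref{lemma.holomorphic} and does not lose the factor $\e$ that makes the argument close. Everything else is bookkeeping: re-running the time-integration estimates of Section \ref{section.contraction} with $\u$ replaced by $\u - \v$ and collecting terms.
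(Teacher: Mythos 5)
Your proposal is correct and follows essentially the same route as the paper: estimate \eqref{esti.T} by summing Propositions \ref{reg_dtheta}, \ref{reg_dx}, \ref{reg_u}, and \eqref{esti.TT} by splitting each difference $T^{[\bullet]}(\u)-T^{[\bullet]}(\v)$ into a term with the coefficient at $\u$ acting on $\u-\v$ and a term with the coefficient difference acting on $\v$, the latter handled by the first-order expansion $H(t,x,u)-H(t,x,v)=(u-v)\widetilde H(t,x,u,v)$ with $\widetilde H$ analytic, which is exactly the paper's argument (including the factor $\e$ gained from the substitution $\e\u$ in \eqref{def.mathbf.H}).
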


\begin{proof}
	Recalling that $T = T^{[\theta]} + T^{[x]} + T^{[\u]}$, we can apply directly Propositions \ref{reg_dtheta}, \ref{reg_dx} and \ref{reg_u} to get \eqref{esti.T}. 
	
	To prove the contraction estimate \eqref{esti.TT}, we write for all $\u$ and $\v$ in $B_{\EE}(0,1)$ the following
$$
	T(\u) - T(\v) = \left(T^{[\theta]}(\u)-T^{[\theta]}(\v)\right) + \left(T^{[x]}(\u)-T^{[x]}(\v)\right) + \left(T^{[\u]}(\u)- T^{[\u]}(\v)\right)
$$

\noindent To get estimates on those three terms we first introduce some notations:
\begin{eqnarray*}
	T_{H}^{[\theta]}(s,\u) & = & \int_{0}^{s} U(s',s) \,\H(\u(s')) \,\d_{\theta} \u(s') ds' \\
	T_{H}^{[x_j]}(s,\u) & = & \int_{0}^{s} U(s',s)\, \H(\u(s')) \, \d_{x_j} \u(s') ds' \\
	T_{H}^{[\u]}(s,\u) & = & \int_{0}^{s} U(s',s) \, \H(\u(s')) \, \u(s') ds' 
\end{eqnarray*}

\noindent with $H(t,x,u)$ holomorphic on the neighborhood of $(0,0,0)\in\R_{t}\times\R_{x}^{d}\times\R_{u}^{N}$, and using notation \eqref{def.mathbf.H}. For example, 
\be
	\label{T_H_theta}
	T^{[\theta]}(s,\u) = T_{H}^{[\theta]}(s,\u) \qquad \text{with } H = A - \und{A}
\ee

\noindent Differences like $T^{[\theta]}(s,\u) - T^{[\theta]}(s,\v)$ are now easier to write. For example
\begin{eqnarray}
	T_{H}^{[\theta]}(s,\u) - T_{H}^{[\theta]}(s,\v) & = & \int_{0}^{s} U(s',s) \,\left(\H(\u(s')) \,\d_{\theta} \u(s') - \H(\v(s')) \,\d_{\theta} \v(s') \right) ds' \nonumber \\
		& = & \phantom{+}\int_{0}^{s} U(s',s) \,\left(\H(\u(s')) - \H(\v(s'))\right) \,\d_{\theta} \v(s') ds' \label{one} \\
		& & + \int_{0}^{s} U(s',s) \,\H(\u(s')) \,\d_{\theta} (\u-\v)(s')  ds'
\end{eqnarray}

\noindent and these two terms are very similar to $T_{H}^{[\theta]}$. The same proof as Proposition \ref{reg_dtheta} gives then directly 
$$
	\left|\left|\left| \int_{0}^{s} U(s',s) \,\H(\u(s')) \,\d_{\theta} (\u-\v)(s')  ds' \right|\right|\right| \lesssim  \beta^{-1} \,|||\H(\u)|||\, |||\u - \v|||
$$

\noindent For the other term \eqref{one} we first note that for all $(t,x,u)$ and $(t,x,v)$ close to the distinguished point $(0,0,0)\in\R\times\R^{d}\times\R^{N}$, with $u-v$ small enough, there holds
$$
	H(t,x,u) - H(t,x,v) = (u-v)\,\widetilde{H}(t,x,u,v)
$$

\noindent with
$$
	\widetilde{H}(t,x,u,v) = \int_{0}^{1} \d_{u} H(t,x,v + y (u-v)) dy .
$$

\noindent Note that $\widetilde{H}$ is an analytic function of $(t,x,u,v)$ near $(0,0,0,0)$. Hence an adaptation of the proof of Proposition \ref{reg_dtheta} gives
\begin{align*}
	& \left|\left|\left| \int_{0}^{s} U(s',s) \,\left(\H(\u(s')) - \H(\v(s'))\right) \,\d_{\theta} \v ds' \right|\right|\right| \\
	& \lesssim \omega^{-(m-1)}\, \beta^{-1}\e \,|||\u - \v|||\,|||\widetilde{\H}(\u,\v)|||\, |||\v||| \\
	& \lesssim \omega^{-(m-1)}\, \beta^{-1}\e \,|||\u - \v|||\,|||\widetilde{\H}(\u,\v)|||
\end{align*}

\noindent as $\v\in B_{\EE}(0,1)$, and recalling the prefactor $\e$ in notation \eqref{def.mathbf.H}. In particular, for $	H = A - \und{A} $ we have just for all $\u$ and $\v$ in $B_{\EE}(0,1)$ both
$$
	|||\H(\u)||| \lesssim |||\mathbf{A}(\u) - \und{\A}(\u)||| \quad \text{and} \quad |||\widetilde{\H}(\u,\v)||| \lesssim 1
$$

\noindent thanks to Lemma \ref{lemma.holomorphic}. Finally there holds for all $\u$ and $\v$ in $B_{\EE}(0,1)$:
\begin{eqnarray*}
	|||T^{[\theta]}(\u)-T^{[\theta]}(\v)||| & \lesssim & \omega^{-(m-1)}\, \beta^{-1} \left(|||\mathbf{A}(\u) - \und{\A}(\u)||| + \e \right) |||\u - \v||| .
\end{eqnarray*}

For both $T^{[x]}(\u) - T^{[x]}(\v)$ and $T^{[\u]}(\u) - T^{[\u]}(\v)$ we do the same to finally get
\begin{eqnarray*}
	||| T^{[x]}(\u) - T^{[x]}(\v) ||| & \lesssim & \omega^{-(m-1)}\,R\rho^{-1} |||\u - \v||| \\
	||| T^{[\u]}(\u) - T^{[\u]}(\v) |||& \lesssim & \omega^{-(m-1)}\, \beta^{-1}\e |||\u - \v|||
\end{eqnarray*}

\noindent as $\e$ is small.

\end{proof}

Thanks to Corollary \ref{reg_dtheta.bis}, we have a finer version of the contraction estimates:
\begin{coro}[Finer contraction estimates in $\EE$]
	\label{prop.estimates.bis}
	There are $R_0$, $\rho_0>0$ such that for all $ \beta >0$, $R\geq R_0$, $\rho>\rho_0$ and $\e \in (0,1)$, we get the following estimates for all $\u$ and $\v$ in $B_{\EE}(0,1)$:
	\begin{align}
		\label{esti.T.bis}
		& |||T(\u)||| \lesssim \omega^{-(m-1)}\,\left( \beta^{-1}\, \e|||\u||| + R\rho^{-1}\right) |||\u||| \\
		\label{esti.TT.bis}
		& |||T(\u) - T(\v)||| \lesssim \omega^{-(m-1)}\,\left( \beta^{-1} \, \e|||\u||| + R\rho^{-1}\right)  |||\u - \v||| .
	\end{align}
\end{coro}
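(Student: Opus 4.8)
The plan is to assemble both bounds from the refined estimates for the three summands in the decomposition $T = T^{[\theta]} + T^{[x]} + T^{[\u]}$ of \eqref{defi.T.decoupage}, in exactly the way Proposition \ref{prop.estimates} is obtained from Propositions \ref{reg_dtheta}, \ref{reg_dx} and \ref{reg_u}; the only change is that the sharper Corollaries \ref{reg_dtheta.bis} and \ref{coro.ref.du} now replace Propositions \ref{reg_dtheta} and \ref{reg_u}.

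First I would prove \eqref{esti.T.bis}. By Corollary \ref{reg_dtheta.bis}, $|||T^{[\theta]}(\u)||| \lesssim \omega^{-(m-1)}\beta^{-1}\e|||\u|||^2$; by Proposition \ref{reg_dx}, $|||T^{[x]}(\u)||| \lesssim \omega^{-(m-1)}R\rho^{-1}|||\u|||$; and by Corollary \ref{coro.ref.du} --- where Assumption \ref{hypo.4} enters --- $|||T^{[\u]}(\u)||| \lesssim \omega^{-(m-1)}\beta^{-1}\e|||\u|||^2$. Summing these three contributions and factoring out one power of $|||\u|||$ yields \eqref{esti.T.bis}.

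For the contraction bound \eqref{esti.TT.bis} I would replay the proof of Proposition \ref{prop.estimates} line by line, using the auxiliary operators $T_H^{[\theta]}, T_H^{[x_j]}, T_H^{[\u]}$ together with the telescoping identity
$$ \H(\u)\,\partial\u - \H(\v)\,\partial\v \;=\; \H(\u)\,\partial(\u-\v) \;+\; \bigl(\H(\u) - \H(\v)\bigr)\,\partial\v, $$
where $\partial$ stands for $\d_\theta$, $\d_{x_j}$ or the identity, and the integral mean-value representation $H(t,x,u) - H(t,x,v) = (u-v)\,\widetilde H(t,x,u,v)$ with $\widetilde H$ analytic near the origin, so that $\H(\u) - \H(\v) = \e(\u-\v)\,\widetilde\H(\u,\v)$ with $|||\widetilde\H(\u,\v)||| \lesssim 1$ on $B_\EE(0,1)$ by Lemma \ref{lemma.holomorphic}. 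The first summand is estimated by the corresponding boundedness proposition applied to $\u - \v$, the second by the same proposition with $\e(\u-\v)\widetilde\H(\u,\v)$ in the role of analytic coefficient and $\v$ the differentiated function. The new ingredient, compared with Proposition \ref{prop.estimates}, is that for the $T^{[\theta]}$ piece, and --- under Assumption \ref{hypo.4} --- the $T^{[\u]}$ piece, the coefficient functions $A - \und A$ and $F$ vanish at $u = 0$, so $|||(\A - \und\A)(\u)||| \lesssim \e|||\u|||$ and $|||\F(\u)||| \lesssim \e|||\u|||$; this is precisely what replaces the factor $\beta^{-1}\bigl(\e|||\F(\u)||| + |||\A(\u) - \und\A(\u)|||\bigr)$ of \eqref{esti.TT} by $\beta^{-1}\e|||\u|||$. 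The $T^{[x]}$ difference requires no such improvement: the outer factor $\e$ of \eqref{def.T_x} combined with the time-regularization of Proposition \ref{reg_dx} already yields the $R\rho^{-1}$ contribution.

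I expect the main (minor) obstacle to be the bookkeeping: in each summand one must keep track of whether $|||\u|||$ or $|||\v|||$ sits on the analytic factor and use $\u, \v \in B_\EE(0,1)$ to absorb the passenger norms into constants, so that every term is dominated by $\omega^{-(m-1)}\bigl(\beta^{-1}\e|||\u||| + R\rho^{-1}\bigr)|||\u - \v|||$. No analytic input beyond what was already established in Sections \ref{section.spaces} and \ref{section.contraction} is needed.
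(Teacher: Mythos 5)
Your proposal is correct and follows essentially the paper's own route: the paper obtains this corollary by inserting the refined coefficient bounds $|||(\A-\und{\A})(\u)|||\lesssim\e|||\u|||$ (Corollary \ref{reg_dtheta.bis}) and, under Assumption \ref{hypo.4}, $|||\F(\u)|||\lesssim|||\u|||$ (Corollary \ref{coro.ref.du}) into the estimates \eqref{esti.T}--\eqref{esti.TT} of Proposition \ref{prop.estimates}, which is exactly what your term-by-term assembly and replay of the contraction argument amount to. Your only deviation is cosmetic: re-running the telescoping argument line by line is not needed once \eqref{esti.TT} is available with the refined coefficient norms.
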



\section{Existence of solutions and estimates from below}
\label{section.existence}



\subsection{Existence of solutions}


Thanks to the Corollary \ref{prop.estimates.bis}, we can now solve the fixed point equation \eqref{fixed.point.equation} in the ball $B_{\EE}\left(0,|||\f_{\e}|||\right)$, provided that $|||\f_{\e}||| \leq 1/2$:

\begin{coro}[Existence of solutions]
	\label{coro.fixedpoint} 
	Let $R(\e) > R_0$, $\rho(\e) > \rho_0$, $\beta(\e) >0$ and $\und{s}(\e)$ be such that 
	\be
		\label{hypo.coro}
		\lim_{\e\to 0} \omega^{-(m-1)}\,\left( \beta^{-1} \e|||\f_{\e}||| + R\rho^{-1}\right) = 0 .
	\ee
	\noindent Then for any $\e$ small enough, the fixed point equation \eqref{fixed.point.equation}, with $\f_{\e}$ defined by \eqref{free.solution}, has a unique solution $\u_{\e}$ in $B_{\EE(R,\rho)}\left(0,2|||\f_{\e}|||\right)$. This solution satisfies
	\be
		\label{esti.solution}
		|||\u_{\e} - \f_{\e}||| \lesssim \omega^{-(m-1)}\,\left( \beta^{-1} \e |||\f_{\e}||| + R\rho^{-1}\right)|||\f_{\e}||| 	.
	\ee
\end{coro}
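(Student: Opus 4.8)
The statement to establish is Corollary~\ref{coro.fixedpoint}. The plan is to obtain $\u_\e$ as the unique fixed point of the map $\Psi(\v) := \f_\e + T(\v)$ on a small closed ball of the Banach space $\EE$, via the Banach fixed point theorem, with the finer contraction bounds \eqref{esti.T.bis}--\eqref{esti.TT.bis} of Corollary~\ref{prop.estimates.bis} doing all the work. Concretely, I would first set
\[
	\eta_\e := \omega^{-(m-1)}\left( \beta^{-1}\e\,|||\f_\e||| + R\rho^{-1} \right),
\]
which tends to $0$ as $\e \to 0$ by the hypothesis \eqref{hypo.coro}, and record that one also has $|||\f_\e||| \leq 1/2$ for $\e$ small (ensured once the parameters $M'$, $R$, $\rho$, $\omega$ are fixed as in the sequel, via the bound $|||\f_\e||| \lesssim \omega^{-(m-1)} e^{M'-M(\e)}$ of Lemma~\ref{lemma.norm.free.solution} with $M(\e)\to\infty$). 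Consequently the closed ball $\overline{B_{\EE}(0,2|||\f_\e|||)}$ is contained in $B_{\EE}(0,1)$, the domain on which the estimates \eqref{esti.T.bis}--\eqref{esti.TT.bis} are valid. Since a fixed point of $\Psi$ is exactly a solution of \eqref{fixed.point.equation}, and $\EE$ is a Banach space while $\overline{B_{\EE}(0,2|||\f_\e|||)}$ is closed, it suffices to show that $\Psi$ is a contraction of that closed ball into itself for $\e$ small.

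For the \emph{self-map} property: if $|||\v||| \leq 2|||\f_\e|||$, then $|||\v|||\leq 1$ and, bounding the prefactor in \eqref{esti.T.bis} by $2\eta_\e$ using $|||\v|||\leq 2|||\f_\e|||$, one gets $|||T(\v)||| \lesssim \eta_\e\,|||\f_\e|||$, which for $\e$ small is $\leq |||\f_\e|||$; hence $|||\Psi(\v)||| \leq |||\f_\e||| + |||T(\v)||| \leq 2|||\f_\e|||$. For the \emph{contraction} property: for $\u,\v$ in the closed ball, the same bookkeeping applied to \eqref{esti.TT.bis} gives $|||\Psi(\u)-\Psi(\v)||| = |||T(\u)-T(\v)||| \lesssim \eta_\e\,|||\u-\v|||$, which is $\leq \tfrac12|||\u-\v|||$ for $\e$ small. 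The Banach fixed point theorem then yields a unique $\u_\e$ in $\overline{B_{\EE}(0,2|||\f_\e|||)}$; moreover $|||\u_\e||| \leq |||\f_\e||| + |||T(\u_\e)||| < 2|||\f_\e|||$ for $\e$ small, so $\u_\e$ actually lies in the open ball $B_{\EE(R,\rho)}(0,2|||\f_\e|||)$, and uniqueness there follows. Finally, \eqref{esti.solution} is read off from $\u_\e - \f_\e = T(\u_\e)$ together with \eqref{esti.T.bis} and $|||\u_\e||| \leq 2|||\f_\e|||$, which give $|||\u_\e - \f_\e||| = |||T(\u_\e)||| \lesssim \omega^{-(m-1)}(\beta^{-1}\e|||\f_\e||| + R\rho^{-1})\,|||\f_\e|||$.

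There is no deep obstacle here: this is the standard Picard argument and the only point requiring care is the elementary bookkeeping that keeps every iterate inside $B_{\EE}(0,1)$, the region where the estimates of Corollary~\ref{prop.estimates.bis} apply — this is precisely what forces the threshold $|||\f_\e|||\leq 1/2$ (so that $2|||\f_\e|||\leq 1$), and the role of the hypothesis \eqref{hypo.coro} is exactly to make $\eta_\e\to0$ so that both the self-map and the $1/2$-Lipschitz bounds hold for all sufficiently small $\e$.
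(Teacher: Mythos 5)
Your proposal is correct and is exactly the argument the paper has in mind: the paper simply declares the proof ``straightforward using the estimates of Corollary~\ref{prop.estimates.bis} under the smallness condition \eqref{hypo.coro}'', and your Picard/Banach fixed point argument on the closed ball $\overline{B_{\EE}(0,2|||\f_{\e}|||)}$ --- with the observation that $|||\f_{\e}|||\leq 1/2$ keeps everything inside $B_{\EE}(0,1)$ where \eqref{esti.T.bis}--\eqref{esti.TT.bis} apply, and with \eqref{esti.solution} read off from $\u_{\e}-\f_{\e}=T(\u_{\e})$ --- is precisely the bookkeeping the author leaves implicit.
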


The proof of the Corollary is straigthforward using the estimates of Corollary \ref{prop.estimates.bis}, under the condition of smallness \eqref{hypo.coro}. For convenience we introduce
\be
	\label{def.K_epsilon}
	K(\e) = \omega^{-(m-1)}\left( \beta^{-1} \, \e |||\f_{\e}||| + R\rho^{-1}\right).
\ee


\subsection{Bounds from below for the solutions}
\label{subsection.below}


Recall that in Section \ref{subsection.sketch}, we explained that to prove Hadamard instability, we prove first that the solution $\u_{\e}$ of \eqref{fixed.point.equation} has the same growth as $\f_{\e}$ given by Lemma \ref{lemma.growth.free.solution}. That is, the goal is to prove
\be
	\label{growth.solution}
	|\u_{\e}(s,x,\theta)| \gtrsim \omega^{-(m-1)}\,e^{-M} \exp \left( \int_{0}^{s} \g^{\flat}(\tau \,;r) d\tau \right)	\quad , \quad \forall\,(s,x,\theta) \in (\und{s} - 1 , \und{s})\times B_{r}(0) \times\mathbb{T} 
\ee

\noindent with $\g^{\flat}$ given by either \eqref{bound.g.flat} (under Assumption \ref{hypo.1}), \eqref{bound.g.flat.bis} (under Assumption \ref{hypo.2}) or \eqref{bound.g.flat.max} (under Assumptions \ref{hypo.2} and \ref{hypo.3}). It is indeed sufficient to prove this kind of estimate only on a small neighborhood of $(\und{s},0)\times\mathbb{T}$, and not on all the domain $\Omega_{R,\e\rho}(0)\times\mathbb{T} $. To this effect in view of Lemma \ref{lemma.growth.free.solution} it suffices to prove that
\be
	\label{esti.Ceps} 
	|(\u_{\e}-\f_{\e})(s,x,\theta)| \lesssim C(\e)\omega^{-(m-1)}\,e^{-M(\e)} \exp \left( \int_{0}^{s} \g^{\flat}(\tau \,;r) d\tau \right) 
\ee

\noindent for some constant $C(\e)$ such that $C(\e) \to 0$ as $\e \to 0$. The constant $C(\e)$ will depend on the parameters $M'$, $R$, $\rho$, $\beta$ and $\omega$. Finding suitable parameters such that $C(\e) \to 0$ as $\e \to 0$ will depend on under which Assumption we work, as it is precised in Propositions \ref{prop.below}, \ref{prop.below.bis} and \ref{prop.below.max}.

First, we decompose $\u_{\e}-\f_{\e}$ with its Fourier modes
$$
	(\u_{\e}-\f_{\e})(s,x,\theta) = \sum_{n\in\Z} (\u-\f_{\e})_n(s,x) e^{in\theta} .
$$

\noindent Thanks to the first property of Lemma \ref{majoring.properties} and estimate \eqref{esti.u}, for all $(s,x,\theta)\in\Omega_{R,\e\rho}(0)\times\mathbb{T}$ there holds 
\begin{eqnarray*}
	|(\u_{\e}-\f_{\e})(s,x,\theta)| & \leq & \sum_{n\in\Z} |(\u_{\e}-\f_{\e})_n|(s,x) \\
		& \leq & \dsp |||\u_{\e}-\f_{\e}||| \sum_{n\in\Z} \frac{\dsp c_1}{\dsp n^2+1} \exp\left(-\left(M'- \int_{0}^{s} \g(\tau) d\tau \right) \left<n\right> \right) \Phi\left(R|x|_1 + \e\rho s\right) 
\end{eqnarray*}

\noindent where $\g$ is defined in \eqref{def.g}. Then, as $M'- \int_{0}^{s} \g(\tau) d\tau >0$ for any $s\in[0,\und{s})$ (recall definition \eqref{def.und.s.1} of $\und{s}_1$ and definition \eqref{finaltime} of $\und{s}$) and $\left<n\right> \geq 1$ for all $n$, we have
\begin{eqnarray*}
	  |(\u_{\e}-\f_{\e})(s,x,\theta)| & \leq & \dsp |||\u_{\e} - \f_{\e}|||\,\exp\left(-\left(M'- \int_{0}^{s} \g(\tau) d\tau \right) \left<n\right> \right) \sum_{n\in\Z} \frac{\dsp c_1}{\dsp n^2+1} \,\Phi\left(R|x|_1 + \e\rho s\right) \\
	  & \leq & \dsp |||\u_{\e} - \f_{\e}|||\,\exp\left(-\left(M'- \int_{0}^{s} \g(\tau) d\tau \right) \left<n\right> \right) \sum_{n\in\Z} \frac{\dsp c_1}{\dsp n^2+1} \,\Phi(1) 
\end{eqnarray*}

\noindent and the last inequality holds because $\Phi$ is convergent in $1$. As the series of the right-hand side of the previous inequality is convergent, there holds
\begin{eqnarray*}	  
	  |(\u_{\e}-\f_{\e})(s,x,\theta)| & \lesssim & \dsp |||\u_{\e} - \f_{\e}|||\,\exp\left(-\left(M'- \int_{0}^{s} \g(\tau) d\tau \right) \left<n\right> \right) 
\end{eqnarray*}

\noindent for all $(s,x,\theta)\in\Omega_{R,\e\rho}(0)\times\mathbb{T}$. 

Next, by Lemma \ref{lemma.norm.free.solution}, estimate \eqref{hypo.coro} of Corollary \ref{coro.fixedpoint} and notation \eqref{def.K_epsilon}, we have successively
\begin{eqnarray}
	|(\u_{\e}-\f_{\e})(s,x,\theta)| & \lesssim & K(\e)\,|||\f_{\e}|||\,\exp\left(-\left(M'- \int_{0}^{s} \g(\tau) d\tau \right) \left<n\right> \right) \nonumber \\
		& \lesssim & K(\e)\, \omega^{-(m-1)} e^{M'-M(\e)}\,\exp\left(-\left(M'- \int_{0}^{s} \g(\tau) d\tau \right) \left<n\right> \right) \nonumber \\
	  & \lesssim & K(\e)\, \omega^{-(m-1)} e^{-M(\e)} \exp\left( \int_{0}^{s} \g(\tau) d\tau \right) \label{esti.local.2} 
\end{eqnarray}

\noindent using $\langle n \rangle \geq 1$ for all $n\in\Z$. Note that estimate \eqref{esti.local.2} holds pointwise for all $(s,x,\theta)\in\Omega_{R,\e\rho}(0)\times\mathbb{T} $. Now we focus our analysis to the smaller domain $(\und{s} - 1 , \und{s}) \times B_{r}(0) \times \mathbb{T}$. Having \eqref{esti.Ceps}  in mind, we rewrite \eqref{esti.local.2} to get
\begin{align}
	& |(\u_{\e}-\f_{\e})(s,x,\theta)| \nonumber \\
	& \lesssim K(\e)\,\exp\left( \int_{0}^{s} (\g(\tau) - \g^{\flat}(\tau\,;r) ) d\tau \right) \,\omega^{-(m-1)}\,e^{-M} \exp \left( \int_{0}^{s} \g^{\flat}(\tau\,;r) d\tau \right) \nonumber \\
	& \lesssim K(\e)\,\exp\left( \int_{0}^{\und{s}} (\g(\tau) - \g^{\flat}(\tau\,;r) ) d\tau \right) \,\omega^{-(m-1)}\,e^{-M} \exp \left( \int_{0}^{s} \g^{\flat}(\tau\,;r) d\tau \right) \\
	& \lesssim K(\e)\,\exp\left( \und{s}\,\beta + \int_{0}^{\und{s}} (\g^{\sharp}(\tau\,;R, \omega) - \g^{\flat}(\tau\,;r, \omega) ) d\tau \right) \,\omega^{-(m-1)}\,e^{-M} \exp \left( \int_{0}^{s} \g^{\flat}(\tau\,;r, \omega) d\tau \right) \label{esti.local} 
\end{align}

\noindent by definition \eqref{finaltime} of $\und{s}$ and definition \eqref{def.g} of $\g$. So to get \eqref{esti.Ceps} we need 
$$
	\lim_{\e\to0} \,K(\e)\,\exp\left( \und{s}\,\beta + \int_{0}^{\und{s}} (\g^{\sharp}(\tau\,;R, \omega) - \g^{\flat}(\tau\,;r, \omega) ) d\tau \right) = 0 .
$$

\noindent If $K(\e) \to 0$ as in \eqref{hypo.coro}, and as $\omega(\e)$ is a small parameter, it suffices then to have
\be
	\label{first.constraint}
	\lim_{\e\to0} \,\exp\left( \und{s}\,\beta + \int_{0}^{\und{s}} (\g^{\sharp}(\tau\,;R, \omega) - \g^{\flat}(\tau\,;r, \omega) ) d\tau \right) = 0 
\ee

\noindent which brings another constraint on the parameters, after \eqref{hypo.coro}. 

We recall also the constraint on the parameters $M'$ and $\rho$ coming from the competition between the growth time $\und{s}_1$ defined in \eqref{def.und.s.1} and the regularity time $(\e\rho)^{-1}$. To see the growth of the solution, we need it to exist on a sufficiently large time compared to the growth time, that is we need $\und{s} = \und{s}_1$. This is equivalent to
\be
	\label{second.constraint}
	\lim_{\e\to0} \,\und{s}_1\e\rho =0 .
\ee

A last constraint on the parameters comes from the smallness of the norm of the free solution, that is
\be
	\label{third.constraint}
	\lim_{\e \to0} \omega^{-(m-1)}\,e^{M' - M} = 0 
\ee

\noindent following Lemma \ref{lemma.norm.free.solution}.

In constraint \eqref{first.constraint}, recall that bound $\g^{\sharp}(\tau\,;R,\omega)$ is defined in Lemma \ref{lemma.growth.propa}. Under Assumption \ref{hypo.1}, the bound $\g^{\sharp}$ is given by \eqref{bound.g.sharp} ; under Assumption \ref{hypo.2}, by \eqref{bound.g.sharp.bis} ; and under Assumptions \ref{hypo.2} and \ref{hypo.3}, by \eqref{bound.g.sharp.max}. Similarly, recall that bound $\g^{\flat}(\tau\,;r,\omega)$ is defined in Lemma \ref{lemma.growth.free.solution}. Under Assumption \ref{hypo.1}, the bound $\g^{\flat}$ is given by \eqref{bound.g.flat} ; under Assumption \ref{hypo.2}, by \eqref{bound.g.flat.bis} ; and under Assumptions \ref{hypo.2} and \ref{hypo.3}, by \eqref{bound.g.flat.max}. In each case, we combine altogether constraints \eqref{hypo.coro}, \eqref{first.constraint}, \eqref{second.constraint} and \eqref{third.constraint}, and we give in the following three Propositions a choice of parameters satisfying those constraints.

\begin{prop}
	\label{prop.below}
	Under Assumption {\rm \ref{hypo.1}}, with the following choice of parameters 
	\be
		\label{good.parameters}
		\omega = \e^{\delta} , \quad \beta = \e^{\delta} , \quad R^{-1} = \e^{\delta} , \quad \rho^{-1} = \e^{(1 + (m-1)\delta)/2} , \quad M' = M(\e) - \min\{ 0 , 1 - (2m-1)\delta \}|\ln(\e)| 
	\ee
	
	\noindent and the limitation on the Gevrey index
	$$
		\sigma < \delta <1/(m+1)
	$$
	
	\noindent where $m$ is the algebraic multiplicity of $\lambda_0$, the fixed point equation \eqref{fixed.point.equation} has a unique solution $\u_{\e}$ in $\EE$ which satisfies 
	\be 
		|\u_{\e}(s,x,\theta)| \gtrsim \e^{-\delta(m-1)} e^{-M(\e)} \exp \left( \int_{0}^{s} \g^{\flat}(\tau\,; r, \omega) d\tau \right) \quad , \quad \forall\,(s,x,\theta)\in(\und{s} - 1, \und{s}) \times B_r(0)\times\mathbb{T} 
	\ee
	\noindent for any $r \lesssim \e^{\delta}$. Another consequence of \eqref{good.parameters} is 
	\be
		\label{final.und.s}
		\und{s} \approx \e^{-\delta} .
	\ee

\end{prop}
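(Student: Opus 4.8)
The plan is to set $M(\e)=\e^{-\delta}$ as in \eqref{size.gevrey}, substitute the parameter values \eqref{good.parameters} into the apparatus already built, and then merely verify the four conditions \eqref{hypo.coro}, \eqref{first.constraint}, \eqref{second.constraint}, \eqref{third.constraint} together with \eqref{final.und.s}. Indeed, once \eqref{hypo.coro} holds, Corollary \ref{coro.fixedpoint} produces a unique $\u_\e$ in $B_{\EE(R,\rho)}(0,2|||\f_\e|||)$ satisfying \eqref{esti.solution}; and once the other three conditions hold, the chain of inequalities of Section \ref{subsection.below} (culminating in \eqref{esti.local}) gives \eqref{esti.Ceps} with some $C(\e)\to0$, whence $|\u_\e|\ge|\f_\e|-|\u_\e-\f_\e|$ and the lower bound \eqref{growth.free.solution}--\eqref{bound.g.flat} of Lemma \ref{lemma.growth.free.solution} yield the announced estimate on $(\und{s}-1,\und{s})\times B_r(0)\times\mathbb{T}$. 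So the whole proof is parameter bookkeeping.

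First I would pin down the final time. With \eqref{good.parameters}, formulas \eqref{def.g} and \eqref{bound.g.sharp} give $\g(\tau)=\g_0+\e\tau+3\e^{\delta}$, so \eqref{def.und.s.1} reads $(\g_0+3\e^{\delta})\und{s}_1+\tfrac{\e}{2}\und{s}_1^{\,2}=M'$. Since $M'=\e^{-\delta}(1+o(1))$ --- the correction term in \eqref{good.parameters} is $O(|\ln\e|)=o(\e^{-\delta})$ --- and since $\delta<1/2$, the quadratic term is $o(\e^{-\delta})$ and $3\e^{\delta}\und{s}_1=O(1)$ as soon as $\und{s}_1=O(\e^{-\delta})$; hence $\und{s}_1=\g_0^{-1}\e^{-\delta}(1+o(1))$. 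As $\e\rho=\e^{(1-(m-1)\delta)/2}$, the bound $\delta<1/(m+1)$ forces $\und{s}_1<(\e\rho)^{-1}$, so $\und{s}=\und{s}_1$ by \eqref{finaltime} --- this is \eqref{final.und.s} --- and $\und{s}_1\e\rho=\e^{(1-(m+1)\delta)/2}(1+o(1))\to0$, which is \eqref{second.constraint}.

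Next the smallness conditions. By Lemma \ref{lemma.norm.free.solution}, $|||\f_\e|||\lesssim\omega^{-(m-1)}e^{M'-M(\e)}=\e^{-(m-1)\delta}e^{M'-M(\e)}$, and the value of $M'$ in \eqref{good.parameters} is chosen precisely so that this quantity tends to $0$ while $M'=M(\e)(1+o(1))$ --- that is \eqref{third.constraint}, and it also secures $|||\f_\e|||\le1/2$ for small $\e$, as Corollary \ref{coro.fixedpoint} requires. For $K(\e)$ of \eqref{def.K_epsilon}, the cost term is $\omega^{-(m-1)}R\rho^{-1}=\e^{-(m-1)\delta}\cdot\e^{-\delta}\cdot\e^{(1+(m-1)\delta)/2}=\e^{(1-(m+1)\delta)/2}\to0$ since $\delta<1/(m+1)$, while the other term obeys $\omega^{-(m-1)}\beta^{-1}\e\,|||\f_\e|||\lesssim\e^{1-m\delta}\big(\omega^{-(m-1)}e^{M'-M(\e)}\big)$, a product of two vanishing factors (the first because $\delta<1/(m+1)<1/m$, the second by \eqref{third.constraint}); hence $K(\e)\to0$, i.e. \eqref{hypo.coro}. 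Finally, \eqref{bound.g.sharp}--\eqref{bound.g.flat} give $\g^{\sharp}(\tau;R,\omega)-\g^{\flat}(\tau;r,\omega)=2\e\tau+R^{-1}+r+2\omega$, so $\und{s}\beta+\int_0^{\und{s}}(\g^{\sharp}-\g^{\flat})\,d\tau=\und{s}\beta+\e\und{s}^{\,2}+(R^{-1}+r+2\omega)\und{s}$, which for $\beta=R^{-1}=\omega=\e^{\delta}$, $r\lesssim\e^{\delta}$ and $\und{s}\sim\g_0^{-1}\e^{-\delta}$ equals $O(1)+O(\e^{1-2\delta})=O(1)$ (again $\delta<1/2$ is used); consequently the prefactor $K(\e)\exp(\und{s}\beta+\int_0^{\und{s}}(\g^{\sharp}-\g^{\flat}))$ of \eqref{esti.local} tends to $0$, which is exactly what \eqref{esti.Ceps} demands --- condition \eqref{first.constraint} is to be understood in this sense, since with the choice \eqref{good.parameters} the exponential alone is only $O(1)$ and it is its product with $K(\e)$ that vanishes.

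The main obstacle is not any one estimate but the fact that \eqref{hypo.coro} and \eqref{second.constraint} pull $\rho$ in opposite directions: \eqref{hypo.coro} requires $\rho^{-1}\ll\omega^{m-1}R^{-1}\sim\e^{m\delta}$ (to defeat the trigonalization loss $\omega^{-(m-1)}\sim\e^{-(m-1)\delta}$ in the cost term $R\rho^{-1}$), while \eqref{second.constraint} requires $(\e\rho)^{-1}\gg\und{s}\sim\e^{-\delta}$, i.e. $\rho^{-1}\gg\e^{1-\delta}$. These two are compatible exactly when $\e^{1-\delta}\ll\e^{m\delta}$, that is $\delta<1/(m+1)$, and $\rho^{-1}=\e^{(1+(m-1)\delta)/2}$ is, at the level of exponents, the geometric mean of the two competing bounds. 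A secondary delicate point is that the loss $\omega^{-(m-1)}$ carried by every propagator bound must be absorbed through the choice of $M'$; since this only shifts $M'$ by $O(|\ln\e|)=o(\e^{-\delta})$, it is invisible at leading order and affects neither $\und{s}\sim\e^{-\delta}$ nor the leading exponential growth $e^{\g_0\und{s}}$ of the solution, hence not the final Hadamard ratio established in Section \ref{section.Hadamard}.
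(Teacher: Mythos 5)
Your route is the forward verification that the paper's written proof explicitly skips: you substitute \eqref{good.parameters}, check \eqref{hypo.coro}, \eqref{first.constraint}, \eqref{second.constraint}, \eqref{third.constraint}, and then invoke Corollary \ref{coro.fixedpoint} together with the chain of estimates of Section \ref{subsection.below}. The paper's proof runs in the opposite direction (it assumes the constraints and derives the parameter choices, i.e.\ an optimality discussion, declaring the forward check ``straightforward''), so your proposal complements rather than reproduces it. Most of your bookkeeping is correct and in places sharper than the paper: the asymptotics $\und{s}=\und{s}_1\sim\g_0^{-1}\e^{-\delta}$ and $\und{s}\,\e\rho\approx\e^{(1-(m+1)\delta)/2}\to0$, the computation $\omega^{-(m-1)}R\rho^{-1}=\e^{(1-(m+1)\delta)/2}$, the identification of $\rho^{-1}$ as the geometric mean of the two competing requirements, and especially your observation that with these parameters the exponential in \eqref{first.constraint} is only $O(1)$ (since $\beta\und{s}\approx\g_0^{-1}$) and that what the argument of Section \ref{subsection.below} actually needs is the vanishing of its product with $K(\e)$ from \eqref{esti.local} --- that reading is the right one.

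The gap sits exactly at the one step you assert instead of compute: the claim that the $M'$ of \eqref{good.parameters} makes $\omega^{-(m-1)}e^{M'-M(\e)}$ tend to $0$, which you then use three times (for \eqref{third.constraint}, for the smallness $|||\f_{\e}|||\le 1/2$ that Corollary \ref{coro.fixedpoint} needs because the contraction estimates only hold on $B_{\EE}(0,1)$, and for the first term of $K(\e)$ in \eqref{def.K_epsilon}). With the formula as written, $\min\{0,1-(2m-1)\delta\}\le 0$ gives $M'\ge M(\e)$, hence $\omega^{-(m-1)}e^{M'-M}\ge\e^{-(m-1)\delta}$: this equals $1$ for $m=1$ and blows up for $m\ge2$, so \eqref{third.constraint} fails, Lemma \ref{lemma.norm.free.solution} no longer yields a free solution of norm at most $1/2$, and for $m\ge3$ with $\delta\in(1/(2m-1),1/(m+1))$ your verification of the first term of \eqref{hypo.coro} collapses as well, since it quotes \eqref{third.constraint}. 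What the argument requires is $M'$ strictly \emph{below} $M(\e)$ by a multiple of $\delta|\ln\e|$ --- for instance $M'=M(\e)-m\delta|\ln\e|$, which gives $\omega^{-(m-1)}e^{M'-M}=\e^{\delta}\to0$, makes the first term of $K(\e)$ of size $\e^{1-(m-1)\delta}$, and changes none of your other estimates because $M'-M=O(|\ln\e|)=o(\e^{-\delta})$. This defect originates in the statement's own formula (compare Proposition \ref{prop.below.bis}, where $M'=M-(1-\delta)|\ln\e|<M$, and the value used in Section \ref{section.Hadamard}), so the repair is purely local; but a blind verification had to run the computation, detect the sign problem, and fix it, whereas your text claims the stated $M'$ ``is chosen precisely'' to do something it does not do.
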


\begin{proof}
	It is straightforward to verify that parameters given by \eqref{good.parameters} satisfy the four constraints \eqref{hypo.coro}, \eqref{first.constraint}, \eqref{second.constraint} and \eqref{third.constraint}. The aim of the proof is to show that those parameters are optimal, in some sens. For that, we assume that the constraints are satisfy and we get constraints directly on $M'$, $\rho$, $R$, $\omega$ and $\beta$. 
	
	First, \eqref{second.constraint} being satisfied the final time is
	$$
		\und{s} = \und{s}_1
	$$
	
	\noindent defined by \eqref{def.und.s.1}. In the asymptotic $\e \to 0$ there holds
	\begin{eqnarray*}
		\int_{0}^{\und{s}_1} \g(\tau) d\tau & \sim & \und{s}_1 \g(\und{s_1}) \\
			& \approx & \g_0 \und{s}_1
	\end{eqnarray*}
	
	\noindent which implies that 
	$$
		\und{s}_1 \approx \frac{M'}{\g_0} .
	$$
	
	\noindent Constraint \eqref{third.constraint} implies that $M' - M = {}- c(\e) + (m-1)\ln\omega$ with $\lim_{\e\to 0} c(\e) = +\infty$. We assume that $c(\e) = o\left(\e^{-\delta}\right)$ to get $M' \sim M$, hence
	$$
		\und{s} \approx M = \e^{-\delta}.
	$$
	
	\noindent We also rewrite \eqref{second.constraint} as
	\be
		\label{local.1}
		\lim_{\e\to0}\e^{1-\delta} \rho = 0.
	\ee
		
	Second, we focus on \eqref{first.constraint}. By definitions \eqref{bound.g.sharp} and \eqref{bound.g.flat} we have
	$$
		\int_{0}^{\und{s}} ( \g^{\sharp}(\tau\, ; R,\omega) - \g^{\flat}(\tau\, ; r,\omega) ) d\tau \lesssim \und{s}\left(\e \und{s} + R^{-1} + r + \omega \right) .
	$$
	
	\noindent As $\und{s} \approx \e^{-\delta}$, for \eqref{first.constraint} to be satisfied we need $ \und{s}\left(\beta + \e \und{s} + R^{-1} + r + \omega \right) $ to be bounded, hence the choices
	$$
		\beta = \e^{\delta} \quad , \quad r = \e^{\delta} \quad , \quad \omega = \e^{\delta}
	$$
	
	\noindent and the constraints
	\be
		\label{local.cons}
		\e \und{s}^2 \lesssim 1 \quad , \quad R^{-1} \lesssim \e^{\delta} .
	\ee
	
	\noindent The first one implies in particular
	$$
		\delta < 1/2 .
	$$
	
	The constraint \eqref{hypo.coro} is now
	$$
		\lim_{\e\to0} \e^{-\delta(m-1)} \left( \e^{1-\delta} \e^{-\delta(m-1)} e^{M'-M} + R\rho^{-1} \right) = 0
	$$
	
	\noindent using \eqref{norm.free.solution}, and that is equivalent to both
	$$
		\lim_{\e\to0} e^{M'-M} \e^{1- \delta(2m-1)} = 0 \quad \text{and} \quad \lim_{\e\to0} \e^{-\delta(m-1)} R\rho^{-1} = 0.
	$$
	
	\noindent The first limit leads to the choice
	$$
		M' = M - \min\{ 0 , 1 - (2m-1)\delta \}|\ln(\e)|
	$$
	
	\noindent reminding that $\delta\in(0,1/m)$. The second limit, combined with \eqref{local.1}, gives us 
	\be
		\label{local.voila.2}
		\e^{1-\delta} \ll \rho^{-1} \ll \e^{\delta(m-1)} R^{-1}
	\ee
	
	\noindent using notation \eqref{notation.ll}. We note then that in particular, $R^{-1}$ has to be greater than $\e^{1-m\delta}$. As $R^{-1}$ has to be also smaller than $\e^{\delta}$, it implies the limitation 
	\be
		\label{local.voila}
		\e^{1-\delta} \ll \e^{\delta(m-1)} \e^{\delta} 
	\ee
	
	\noindent which is equivalent to
	$$
		\delta <1/(m+1) ,
	$$
	
	\noindent compatible with the previous limitation $\delta <1/2$ as $m \geq 1$.
	
\end{proof}

\begin{prop}
	\label{prop.below.bis}
	Under Assumption {\rm \ref{hypo.2}}, with the following choice of parameters 
	\be
		\label{good.parameters.bis}
		\omega = 0, \quad \beta = \e^{\delta} , \quad R^{-1} = \e , \quad \rho^{-1} = \e^{1-\delta/2} , \quad M' = M(\e) - (1-\delta)|\ln(\e)| 
	\ee
	
	\noindent and the limitation on the Gevrey index
	$$
		\sigma < \delta <1/2
	$$
	
	\noindent the fixed point equation \eqref{fixed.point.equation} has a unique solution $\u_{\e}$ in $\EE$ which satisfies 
	\be 
		|\u_{\e}(s,x,\theta)| \gtrsim e^{-M(\e)} \exp \left( \int_{0}^{s} \g^{\flat}(\tau\,; r, \omega) d\tau \right) \quad , \quad \forall\,(s,x,\theta)\in(\und{s} - 1, \und{s}) \times B_r(0)\times\mathbb{T} 
	\ee
	\noindent for any $r \lesssim \e^{\delta}$. Another consequence of \eqref{good.parameters} is 
	\be
		\label{final.und.s.bis}
		\und{s} \approx \e^{-\delta} .
	\ee

\end{prop}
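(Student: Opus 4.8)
The plan is to run, with only cosmetic changes, the argument of Proposition \ref{prop.below}, now in the regime of Assumption \ref{hypo.2}. The gain from this hypothesis is that $\und{A}$ admits a smooth partial diagonalization over the eigenspace of the analytic branch $\lambda$ with $\lambda(0,x_0)=\lambda_0$, so that the approximate trigonalization behind Lemma \ref{lemma.growth.propa} produces no error: one has $m=1$ and may take $\omega=0$, with
$$ \g^{\sharp}(\tau;R,\omega) = \g_0 + \e\tau + R^{-1}, \qquad \g^{\flat}(\tau;r,\omega) = \g_0 - \e\tau - r $$
by the second bullets of Lemmas \ref{lemma.growth.propa} and \ref{lemma.growth.free.solution}. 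Consequently every factor $\omega^{-(m-1)}$ appearing in Corollary \ref{prop.estimates.bis}, Corollary \ref{coro.fixedpoint}, Lemma \ref{lemma.norm.free.solution} and in the target bound \eqref{growth.solution} equals $1$; this is exactly what removes the algebraic-multiplicity penalty present under Assumption \ref{hypo.1} and lets $\delta$ reach $1/2$.

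First I would establish existence. By Lemma \ref{lemma.norm.free.solution} with $m=1$ one has $|||\f_\e||| \lesssim e^{M'-M(\e)} = \e^{1-\delta}$ from the choice $M'=M(\e)-(1-\delta)|\ln\e|$, which is \eqref{third.constraint} and in particular forces $|||\f_\e|||\le 1/2$ for small $\e$. Next I would verify the smallness hypothesis \eqref{hypo.coro} of Corollary \ref{coro.fixedpoint}: with the stated parameters $\beta^{-1}\e|||\f_\e||| \lesssim \e^{-\delta}\cdot\e\cdot\e^{1-\delta}=\e^{2(1-\delta)}\to 0$ and $R\rho^{-1}\to 0$, so Corollary \ref{coro.fixedpoint} yields a unique $\u_\e\in B_{\EE}(0,2|||\f_\e|||)$ with $|||\u_\e-\f_\e|||\lesssim K(\e)|||\f_\e|||$, $K(\e)\to 0$.

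Then I would fix the observation time. For $\tau$ of order $\e^{-\delta}$ and the chosen $R^{-1},\beta$, the integrand $\g(\tau)=\g^{\sharp}(\tau;R,\omega)+\beta$ equals $\g_0$ up to lower-order terms, so the relation $M'=\int_0^{\und{s}_1}\g$ defining $\und{s}_1$ (see \eqref{def.und.s.1}) gives $\und{s}_1 \approx M'/\g_0 \approx \e^{-\delta}$; a check of \eqref{second.constraint} then yields $\und{s}=\und{s}_1\approx\e^{-\delta}$, i.e.\ \eqref{final.und.s.bis}. With this the remaining constraint \eqref{first.constraint} reads, since $\g^{\sharp}-\g^{\flat}=2\e\tau+R^{-1}+r$,
$$ \exp\!\Big(\und{s}\,\beta + \int_0^{\und{s}}\!\big(2\e\tau+R^{-1}+r\big)\,d\tau\Big) = \exp\!\big(\und{s}\,\beta + \e\,\und{s}^{\,2} + (R^{-1}+r)\und{s}\big), $$
which stays bounded provided $\e\,\und{s}^{\,2}\lesssim 1$, that is $\delta\le 1/2$; the choices $\beta=r=\e^{\delta}$ make $\und{s}\,\beta$ and $r\,\und{s}$ of order one and $R^{-1}\und{s}\to 0$. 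Since $K(\e)\to 0$ this suffices for \eqref{first.constraint}, and together with $\sigma<\delta$ (needed for the Gevrey norm to be exponentially small, see \eqref{size.gevrey} and Remark \ref{remark.size.gevrey}) one gets the announced range $\sigma<\delta<1/2$.

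Finally, all four constraints \eqref{hypo.coro}, \eqref{first.constraint}, \eqref{second.constraint}, \eqref{third.constraint} being met, the general computation of Section \ref{subsection.below} — estimate \eqref{esti.local} with $\omega^{-(m-1)}=1$ — produces \eqref{esti.Ceps} with $C(\e)\to 0$, so that $|\u_\e-\f_\e|$ is negligible on $(\und{s}-1,\und{s})\times B_r(0)\times\mathbb{T}$, any $r\lesssim\e^{\delta}$, against the lower bound \eqref{growth.free.solution} for $\f_\e$, which with $\omega^{-(m-1)}=1$ is precisely $e^{-M(\e)}\exp(\int_0^s\g^{\flat}(\tau;r,\omega)\,d\tau)$. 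Then $|\u_\e|\ge|\f_\e|-|\u_\e-\f_\e|\gtrsim e^{-M(\e)}\exp(\int_0^s\g^{\flat}(\tau;r,\omega)\,d\tau)$ on that set, which is the claim. I expect the only delicate point to be the bookkeeping in the third step: one must see that the exponents of $R^{-1}$ and $\rho^{-1}$ can be chosen to satisfy \eqref{hypo.coro} and \eqref{second.constraint} at once while keeping $\und{s}R^{-1}$ bounded, and the window for these exponents is nonempty exactly when $\delta<1/2$. Everything else is a transcription of the proof of Proposition \ref{prop.below} with $m=1$ and $\omega=0$.
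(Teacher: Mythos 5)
Your overall strategy is exactly the paper's: the published proof of this proposition consists of re-running the proof of Proposition \ref{prop.below} with $m=1$ and no trigonalization loss, so that the chain of constraints \eqref{local.voila.2} becomes $\e^{1-\delta}\ll\rho^{-1}\ll R^{-1}$ and its solvability (together with $\und{s}R^{-1}\lesssim 1$, i.e.\ $R^{-1}\lesssim\e^{\delta}$) is precisely the condition $\delta<1/2$. Your closing ``window'' remark is exactly that argument.

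There is, however, a concrete false step in your verification: with the literal values of \eqref{good.parameters.bis}, namely $R^{-1}=\e$ and $\rho^{-1}=\e^{1-\delta/2}$, one has $R\rho^{-1}=\e^{-1}\cdot\e^{1-\delta/2}=\e^{-\delta/2}\to+\infty$, so \eqref{hypo.coro} does \emph{not} hold as you claim, and likewise $\und{s}_1\,\e\rho\approx\e^{-\delta}\cdot\e\cdot\e^{-(1-\delta/2)}=\e^{-\delta/2}\to+\infty$, so \eqref{second.constraint} fails as well: the regularity time $(\e\rho)^{-1}=\e^{-\delta/2}$ is then \emph{shorter} than the observation time $\e^{-\delta}$, $\und{s}=(\e\rho)^{-1}\neq\und{s}_1$, and \eqref{final.und.s.bis} would not follow. (This is really an inconsistency in the displayed parameter list itself, since these values also violate the chain $\e^{1-\delta}\ll\rho^{-1}\ll R^{-1}$ that the paper's own proof invokes; note that with $R^{-1}=\e$ the two requirements $\rho^{-1}\ll\e$ and $\e^{1-\delta}\ll\rho^{-1}$ are incompatible for any $\delta>0$.) Your proof should therefore not assert that the stated exponents satisfy the four constraints; instead, either argue as the paper does purely through the constraint window $\e^{1-\delta}\ll\rho^{-1}\ll R^{-1}\lesssim\e^{\delta}$, nonempty exactly when $\delta<1/2$, or exhibit admissible values lying in it, e.g.\ $R^{-1}=\e^{\delta}$ and $\rho^{-1}=\e^{1/2}$ (the $m=1$ specialization of \eqref{good.parameters}), for which $R\rho^{-1}=\e^{1/2-\delta}\to0$ and $\und{s}_1\e\rho=\e^{1/2-\delta}\to0$ indeed hold. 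With that correction the rest of your argument (the $m=1$, $\omega$-free bounds, $|||\f_\e|||\lesssim\e^{1-\delta}$, $\beta^{-1}\e|||\f_\e|||\lesssim\e^{2(1-\delta)}$, boundedness of $\und{s}\beta+\e\und{s}^2+(R^{-1}+r)\und{s}$, and the final comparison of $\u_\e$ with $\f_\e$) goes through and coincides with the paper's proof.
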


\begin{proof}

	The proof is the same the one of Proposition \ref{prop.below}, with the difference that with Assumption \ref{hypo.2}, estimate \eqref{local.voila.2} is replaced by $		\e^{1-\delta} \ll \rho^{-1} \ll R^{-1}$ as $m=1$. Hence constraint \eqref{local.voila} is now $ \e^{1-\delta} \ll \e^{\delta(m-1)} \e^{\delta}  $ which is equivalent to $\delta <1/2$.

	%
	%
\end{proof}

\begin{prop}
	\label{prop.below.max}
	Under Assumptions {\rm \ref{hypo.2}} and {\rm \ref{hypo.3}}, with the following choice of parameters 
	\be
		\label{good.parameters.max}
		\omega = 1 , \quad \beta = \e^{\delta} , \quad R^{-1} = \e , \quad \rho^{-1} = \e^{1-\delta/2} , \quad M' = M(\e) - (1-\delta)|\ln(\e)| 
	\ee
	
	\noindent and the limitation on the Gevrey index
	$$
		\sigma < \delta <2/3
	$$
	
	\noindent the fixed point equation \eqref{fixed.point.equation} has a unique solution $\u_{\e}$ in $\EE$ which satisfies 
	\be 
		|\u_{\e}(s,x,\theta)| \gtrsim e^{-M(\e)} \exp \left( \int_{0}^{s} \g^{\flat}(\tau\,; r, \omega) d\tau \right) \quad , \quad \forall\,(s,x,\theta)\in(\und{s} - 1, \und{s}) \times B_r(0)\times\mathbb{T} 
	\ee
	\noindent for any $r \lesssim \e^{\delta}$. Another consequence of \eqref{good.parameters} is 
	\be
		\label{final.und.s.max}
		\und{s} \approx \e^{-\delta} .
	\ee

\end{prop}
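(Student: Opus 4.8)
The argument follows the template of Propositions~\ref{prop.below} and~\ref{prop.below.bis}, and I indicate only the point at which Assumptions~\ref{hypo.2} and~\ref{hypo.3} produce the improvement. Under these assumptions Lemma~\ref{lemma.growth.propa} supplies the propagator bound with $m=1$, $\omega=1$ and $\g^{\sharp}(\tau)=\g_0$ (formula~\eqref{bound.g.sharp.max}), while Lemma~\ref{lemma.growth.free.solution} supplies the free-solution bound with $\g^{\flat}(\tau)={\rm Im}\,\lambda(\e\tau,0)-r$ (formula~\eqref{bound.g.flat.max}). Since $m=1$, the prefactor $\omega^{-(m-1)}$ equals $1$, so all the powers of $\omega$ that obstruct the analysis under Assumption~\ref{hypo.1} are now trivial.

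The first step is to produce the solution: I would apply Corollary~\ref{coro.fixedpoint} to get the unique $\u_{\e}\in\EE$ of~\eqref{fixed.point.equation} in the ball $B_{\EE}(0,2|||\f_{\e}|||)$, which amounts to checking the smallness condition~\eqref{hypo.coro}, i.e. $K(\e)\to0$. Using Lemma~\ref{lemma.norm.free.solution} ($|||\f_{\e}|||\lesssim e^{M'-M}$) and $\beta=\e^{\delta}$, this reads $\e^{1-\delta}e^{M'-M}+R\rho^{-1}\to0$; the first term vanishes with the choice $M'=M-(1-\delta)|\ln\e|$ (so that $e^{M'-M}=\e^{1-\delta}$), and the second with the choice of $R$ and $\rho$ in~\eqref{good.parameters.max}. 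Then, to upgrade existence to the wild-growth bound~\eqref{growth.solution}, I would run the estimates of Section~\ref{subsection.below} verbatim: inequality~\eqref{esti.local}, together with $K(\e)\to0$, reduces~\eqref{growth.solution} to proving that $K(\e)\exp\big(\und{s}\,\beta+\int_0^{\und{s}}(\g^{\sharp}(\tau)-\g^{\flat}(\tau))\,d\tau\big)\to0$ as $\e\to0$; granting this, \eqref{esti.Ceps} holds, Lemma~\ref{lemma.growth.free.solution} gives the stated lower bound on $|\u_{\e}|$ on $(\und{s}-1,\und{s})\times B_r(0)\times\mathbb{T}$, and~\eqref{final.und.s.max} follows as in the proof of Proposition~\ref{prop.below} since $M'\sim M=\e^{-\delta}$ forces $\und{s}\approx\e^{-\delta}$.

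The only genuinely new computation is the estimate of $\int_0^{\und{s}}(\g^{\sharp}-\g^{\flat})\,d\tau$. Here $\g^{\sharp}(\tau)-\g^{\flat}(\tau)=\g_0-{\rm Im}\,\lambda(\e\tau,0)+r$, and by Remark~\ref{remark.asum.3} the map $(t,x)\mapsto{\rm Im}\,\lambda(t,x)$ has a nondegenerate local maximum at $(0,0)$, so a second-order Taylor expansion in $t$ gives $\g_0-{\rm Im}\,\lambda(\e\tau,0)\lesssim(\e\tau)^2$ for $\tau\in[0,\und{s}]$ (legitimate because $\e\und{s}\approx\e^{1-\delta}\to0$). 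Therefore
$$
\und{s}\,\beta+\int_0^{\und{s}}\big(\g^{\sharp}-\g^{\flat}\big)\,d\tau \;\lesssim\; \und{s}\,\beta+\e^2\und{s}^3+r\,\und{s} \;\approx\; \e^{-\delta}\beta+\e^{2-3\delta}+r\,\e^{-\delta},
$$
using $\und{s}\approx\e^{-\delta}$. With $\beta=\e^{\delta}$ and $r\lesssim\e^{\delta}$ the outer two terms are $O(1)$, and the middle term $\e^{2-3\delta}$ is $O(1)$ exactly when $\delta\le2/3$; combined with $K(\e)\to0$ this gives the required limit.

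The main obstacle is precisely this cubic term $\e^2\und{s}^3$: it is the quadratic, rather than merely linear, vanishing of $\g_0-{\rm Im}\,\lambda$ at the space-time maximum — available under Assumption~\ref{hypo.3}, whereas under Assumption~\ref{hypo.2} alone one has only the linear bound $\g^{\sharp}-\g^{\flat}\lesssim\e\tau+R^{-1}+r$ — that replaces the constraint $\e\,\und{s}^2\lesssim1$ (i.e. $\delta<1/2$) by $\e^2\und{s}^3\lesssim1$ (i.e. $\delta<2/3$). It then remains only to check that the surviving constraints~\eqref{second.constraint} ($\e^{1-\delta}\ll\rho^{-1}$) and $R\rho^{-1}\to0$, i.e. the chain $\e^{1-\delta}\ll\rho^{-1}\ll R^{-1}$, are met by~\eqref{good.parameters.max}; this is routine and compatible with the whole range $\delta<2/3$.
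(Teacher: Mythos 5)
Your proposal is in substance the paper's own proof: the paper likewise sets $\omega=1$, $m=1$, and reduces everything to the single new estimate $\g^{\sharp}-\g^{\flat}\lesssim \e^{2}\und{s}^{2}+r$ (its \eqref{proof.local.cool.2}), obtained exactly as you obtain it from the quadratic vanishing of $\g_0-{\rm Im}\,\lambda(\e\tau,0)$ at the nondegenerate space-time maximum guaranteed by Remark \ref{remark.asum.3}; this replaces the obstruction $\e\,\und{s}^{2}\lesssim 1$ by $\e^{2}\und{s}^{3}\lesssim 1$, i.e. $\delta<2/3$, and the remaining steps (existence via Corollary \ref{coro.fixedpoint}, the reduction \eqref{esti.Ceps}--\eqref{esti.local}, and $\und{s}\approx\e^{-\delta}$) are delegated to the scheme of Proposition \ref{prop.below}, just as in your write-up.

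One caveat, which you inherit from the paper rather than introduce: your closing assertion that the chain $\e^{1-\delta}\ll\rho^{-1}\ll R^{-1}$ is ``routinely'' satisfied by the printed values in \eqref{good.parameters.max} is false as stated. With $R^{-1}=\e$ and $\rho^{-1}=\e^{1-\delta/2}$ one gets $R\rho^{-1}=\e^{-\delta/2}\to\infty$ and $\und{s}\,\e\rho\approx\e^{-\delta/2}\to\infty$, so neither the smallness condition \eqref{hypo.coro} nor the constraint \eqref{second.constraint} holds; the paper's proof does not check this either, and the parameter list is evidently a misprint (the same defect occurs in \eqref{good.parameters.bis}). The argument is repaired by any admissible choice, for instance $R^{-1}$ a fixed small constant (permitted here because, under Assumptions \ref{hypo.2} and \ref{hypo.3}, $R^{-1}$ no longer enters $\g^{\sharp}$) together with $\rho^{-1}=\e^{(1-\delta)/2}$: then $R\rho^{-1}\to0$, $\und{s}\,\e\rho\approx\e^{(1-\delta)/2}\to0$, and your bound $\und{s}\beta+\e^{2}\und{s}^{3}+r\und{s}=O(1)$ holds on the whole range $\delta<2/3$. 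You should state such a choice explicitly rather than cite \eqref{good.parameters.max} verbatim; note also that a choice like $R^{-1}=\e^{\delta}$ (natural under Assumption \ref{hypo.1}) would reimpose $\delta<1/2$ through the chain, so the freedom in $R$ gained from Assumption \ref{hypo.3} is genuinely needed here.
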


\begin{proof}
	The proof is the same the one of Proposition \ref{prop.below}, with the difference that with Assumption \ref{hypo.2}, the bounds \eqref{bound.g.sharp} and \eqref{bound.g.flat} are replaced by the sharper bounds \eqref{bound.g.sharp.max} and \eqref{bound.g.flat.max}, respectively. First, note that the parameter of trigonalization $\omega$ does not appear anymore, and is then taken equal to one. Second, thanks to Assumption \ref{hypo.2}, difference $\g^{\sharp} - \g^{\flat}$ is improved:
	\be
		\label{proof.local.cool.2}
		\g^{\sharp}(\tau\, ; R,\omega) - \g^{\flat}(\tau\, ; r,\omega) \lesssim \e^{2}\und{s}^{2} + r
	\ee
	
	\noindent This implies in particular that 
	$$
		\int_{0}^{\und{s}} ( \g^{\sharp}(\tau\, ; R,\omega) - \g^{\flat}(\tau\, ; r,\omega) ) d\tau \lesssim \und{s}\left( r + \e^{2}\und{s}^2 \right) 
	$$
	
	\noindent which no longer implies constraints \eqref{local.cons}. It suffices then to follow the rest of the proof of Proposition \ref{prop.below}.
\end{proof}

\begin{remark}
	\label{remark.amelioration}
	
	Estimate \eqref{proof.local.cool.2} in the previous proof shows that the limiting Gevrey index increases as $\g^{\sharp} - \g^{\flat}$ decreases (with $\g^{\sharp}$ and $\g^{\flat}$ the upper and lower rates of growth introduced in Lemmas {\rm \ref{lemma.growth.propa}} and {\rm \ref{lemma.growth.free.solution}}). In particular, if the distinguished eigenvalue $\lambda$ is very flat at the distinguished point $(0,x_0)$, then the limiting Gevrey index is close to $1$, as claimed in Remark {\rm \ref{remark.aprestheo3}}.
	
\end{remark}

\section{Conclusion: Hadamard instability in Gevrey spaces}
\label{section.Hadamard}


To close the proofs of Theorems \ref{theorem.2}, \ref{theorem.3} and \ref{theorem.4} we have now to get an estimate of the ratio
$$ \frac{\dsp ||u_{\e}||_{L^2(\Omega_{R,\rho}(0))}}{\dsp ||h_{\e}||^{\a}_{\sigma,c,K}} $$

\noindent The previous Sections show the existence of a family of solutions $\u$ starting from $\f_{\e}$ of the fixed point equation \eqref{fixed.point.equation}. Thanks to the ansatz \eqref{ansatz} which we recall here
$$
	u_{\e}(t,x) = \e\u(\e^{-1}\,t,x,x\cdot\xi_0/\e)
$$

\noindent we have then a family of solutions $u_{\e}$ existing in domains $\Omega_{R,\rho}(0)$, with $R$ and $\rho$ given by \eqref{good.parameters}. As $\und{s} < (\e\rho)^{-1}$ the domain of regularity $\Omega_{R,\rho}(0)$ for $\u$ contains the cube of size $\e$
$$ C_{\e} = \{ (t,x) \,|\, \e\und{s} - \e < t <\e\und{s} ,\quad |x| < \e \} $$

\noindent On one hand, thanks to estimate \eqref{growth.solution} with $r=\e$ there holds 
\begin{eqnarray*}
	||u_{\e}||_{L^2(\Omega_{R,\rho})} & \geq & ||u_{\e}||_{L^2(C_{\e})} \\
	 & \gtrsim & \inf_{\e\und{s} - \e < t <\e\und{s}} \left(\e^{-\delta(m-1)}  e^{-M(\e)} \exp \left( \int_{0}^{t/\e} \g^{\flat}( \tau/\e) d\tau \right) \right) \,||1||_{L^2(C_{\e})} \\
	 & \gtrsim & \e^{-\delta(m-1)} e^{-M(\e)} \exp \left( (\und{s}-1) \left( \g_0 - \e\und{s} - r - \omega \right)  \right) \, \e^{(d+1)/2} \\
	 & \gtrsim & \e^{-\delta(m-1)} e^{-M(\e)} e^{\g_0\und{s}}\, \e^{(d+1)/2}
\end{eqnarray*}

\noindent Next, by choice of $M' = M -(m\delta-1)|\ln(\e)|$ we get
$$
	||u_{\e}||_{L^2(\Omega_{R,\rho})} \gtrsim \e^{-\delta(2m+1)+1} e^{-M'(\e)} e^{\g_0\und{s}} \, \e^{1+(d+1)/2}.
$$

\noindent As 
$$
	M' = \und{s} \g = \und{s}\g_0 (1+ 2\e^{\delta})
$$

\noindent this implies that 
\begin{eqnarray*}
	||u_{\e}||_{L^2(\Omega_{R,\rho})} & \gtrsim & e^{ - \und{s}\g_0(1 + 2\e^{\delta}) + \g_0 \und{s}} \, \e^{1+(d+1)/2-\delta(2m+1)} \\
		& \gtrsim & \e^{1+(d+1)/2-\delta(2m+1)}
\end{eqnarray*}

\noindent as $\und{s}\e^{\delta} \approx 1$.

On the other hand, by Lemma \ref{size.gevrey.exp} and definition \eqref{size.gevrey} of $M$ there holds
$$ 
	||h_{\e}||_{\sigma,c,K} \lesssim \e e^{-M} e^{c\e^{-\sigma}} = \e \exp(c\e^{-\sigma} - \e^{-\delta}) 
$$

\noindent which is small as soon as $\sigma < \delta$. Combining those two estimates we have then
$$ 
	\frac{\dsp ||u_{\e}||_{L^2(\Omega_{R,\rho})}}{\dsp ||h_{\e}||^{\a}_{\sigma,c,K}} \gtrsim \e^{1+(d+1)/2-\delta(2m+1)-\a} \exp(-\a c\e^{-\sigma} + \a \e^{-\delta})  
$$
\noindent that tends to $+\infty$ as $\e\to0$ because $\sigma < \delta$ no matter whether $1+(d+1)/2-\delta(2m+1)-\a$ is positive or negative , which ends the proof of Theorem \ref{theorem.2}. 

The proofs of Theorems \ref{theorem.3} and \ref{theorem.4} rely on the exact same computations, using Proposition \ref{prop.below.bis} and Proposition \ref{prop.below.max} respectively, instead of Proposition \ref{prop.below}.

	\bibliographystyle{alpha}
	\bibliography{onset_insta}

\begin{thebibliography}{BMM16}

\bibitem[Bed16]{bedrossian2016nonlinear}
Jacob Bedrossian.
\newblock Nonlinear echoes and {L}andau damping with insufficient regularity.
\newblock {\em arXiv preprint arXiv:1605.06841}, 2016.

\bibitem[BMM16]{bedrossian2013landau}
Jacob Bedrossian, Nader Masmoudi, and Clement Mouhot.
\newblock Landau damping: paraproducts and gevrey regularity.
\newblock {\em Annals of PDE}, 2(1):1--71, 2016.

\bibitem[Car61]{cartan1995theorie}
Henri Cartan.
\newblock {\em Th{\'e}orie {\'e}l{\'e}mentaire des fonctions analytiques d'une
  ou plusieurs variables complexes}.
\newblock Number~26. Hermann, 1961.

\bibitem[GVD10]{gerard2010ill}
{D}avid G{\'e}rard-{V}aret and {\'E}mmanuel {D}ormy.
\newblock On the ill-posedness of the {P}randtl equation.
\newblock {\em Journal of the {A}merican {M}athematical {S}ociety},
  23(2):591--609, 2010.

\bibitem[GVM15]{gerard2013well}
{D}avid G{\'e}rard-{V}aret and {N}ader {M}asmoudi.
\newblock Well-posedness for the {P}randtl system without analyticity or
  monotonicity.
\newblock {\em Annales Scientifiques de l'\'Ecole Normale Sup\'erieure.
  Quatri\`eme S\'erie}, 48(6):1273--1325, 2015.

\bibitem[Had02]{hadamard1902problemes}
Jacques Hadamard.
\newblock Sur les probl{\`e}mes aux d{\'e}riv{\'e}es partielles et leur
  signification physique.
\newblock {\em Princeton university bulletin}, 13(49-52):28, 1902.

\bibitem[Kat66]{kato2013perturbation}
Tosio Kato.
\newblock {\em Perturbation theory for linear operators}, volume 132.
\newblock Springer Science \& Business Media, 1966.

\bibitem[Lax05]{lax2005asymptotic}
Peter~D. Lax.
\newblock Asymptotic solutions of oscillatory initial value problems.
\newblock {\em Selected Papers Volume I}, pages 56--75, 2005.

\bibitem[LMX10]{lerner2010instability}
Nicolas Lerner, Yoshinori Morimoto, and Chao-Jiang Xu.
\newblock Instability of the {C}auchy-{K}ovalevskaya solution for a class of
  nonlinear systems.
\newblock {\em American journal of mathematics}, 132(1):99--123, 2010.

\bibitem[LNT17]{lerner2015onset}
Nicolas Lerner, Toan~T. Nguyen, and Benjamin Texier.
\newblock The onset of instability in first-order systems.
\newblock {\em Journal of the European Mathematical Society (to appear)}, 2017.

\bibitem[M{\'e}t05]{metivier2005remarks}
Guy M{\'e}tivier.
\newblock Remarks on the well-posedness of the nonlinear {C}auchy problem.
\newblock {\em Contemporary Mathematics}, 368:337--356, 2005.

\bibitem[Miz61]{mizohata1961some}
Sigeru Mizohata.
\newblock Some remarks on the {C}auchy problem.
\newblock {\em Journal of Mathematics- Kyoto University}, 1(1):109--127, 1961.

\bibitem[Mor16]{morisse2016II}
Baptiste Morisse.
\newblock On hyperbolicity and {G}evrey well-posedness. {P}art 2: scalar or
  degenerate transitions.
\newblock {\em In preparation}, 2016.

\bibitem[MV11]{mouhot2010landau}
Cl{\'e}ment Mouhot and C{\'e}dric Villani.
\newblock On landau damping.
\newblock {\em Acta mathematica}, 207(1):29--201, 2011.

\bibitem[Rod93]{rodino1993linear}
Luigi Rodino.
\newblock {\em Linear partial differential operators in {G}evrey spaces}.
\newblock World Scientific, 1993.

\bibitem[Tex04]{texier2004short}
Benjamin Texier.
\newblock The short-wave limit for nonlinear, symmetric, hyperbolic systems.
\newblock {\em Advances in Differential Equations}, 9(1-2):1--52, 2004.

\bibitem[Tex17]{texier2015rouche}
{B}enjamin Texier.
\newblock Basic matrix perturbation theory.
\newblock {\em L'Enseignement Math\'ematique (to appear)}, 2017.

\bibitem[Uka01]{ukai2001boltzmann}
Seiji Ukai.
\newblock The {B}oltzmann-{G}rad limit and {C}auchy-{K}ovalevskaya theorem.
\newblock {\em Japan journal of industrial and applied mathematics},
  18(2):383--392, 2001.

\bibitem[Wag79]{wagschal1979probleme}
Claude Wagschal.
\newblock Le probl{\`e}me de {G}oursat non-lin{\'e}aire.
\newblock {\em S{\'e}minaire {\'E}quations aux d{\'e}riv{\'e}es partielles},
  pages 1--11, 1979.

\bibitem[Wak01]{wakabayashi2001lax}
Seiichiro Wakabayashi.
\newblock The {L}ax-{M}izohata theorem for nonlinear {C}auchy problems.
\newblock {\em Communications in Partial Differential Equations},
  26(7-8):1367--1384, 2001.

\bibitem[Yag98]{yagdjian1998note}
Karen Yagdjian.
\newblock A note on {L}ax-{M}izohata theorem for quasilinear equations.
\newblock {\em Communications in partial differential equations},
  23(5-6):1--14, 1998.

\bibitem[Yag02]{yagdjian2002lax}
Karen Yagdjian.
\newblock The {L}ax-{M}izohata theorem for nonlinear gauge invariant equations.
\newblock {\em Nonlinear Analysis: Theory, Methods \& Applications},
  49(2):159--175, 2002.

\end{thebibliography}


\end{document}